\documentclass{amsart}
\usepackage{lineno}
\usepackage[utf8]{inputenc}
\usepackage[english]{babel}
\usepackage[margin=1.2in]{geometry}
\usepackage{amsmath, amsthm, amscd, amssymb, mathtools}
\usepackage{verbatim} % remove for comments
\usepackage{wrapfig}
\usepackage{todonotes}
\usepackage{bbm}
\usepackage{array}
\usepackage[nocompress]{cite}

\usepackage{hyperref}
\hypersetup{
  colorlinks   = true, %Colours links instead of ugly boxes
  urlcolor     = blue, %Colour for external hyperlinks
  linkcolor    = blue, %Colour of internal links
  citecolor   = red %Colour of citations
}

\usepackage{tikz-cd}
\usetikzlibrary{decorations.pathmorphing}
\tikzset{
  symbol/.style={
    draw=none,
    every to/.append style={
      edge node={node [sloped, allow upside down, auto=false]{$#1$}}}
  }
}

\usepackage{enumitem}
\setenumerate{itemsep=0pt,topsep=3pt}
\setenumerate[1]{label=\textup{(\roman*)}}

\newcommand{\PreserveBackslash}[1]{\let\temp=\\#1\let\\=\temp}
\newcolumntype{C}[1]{>{\PreserveBackslash\centering$}p{#1}<{$}}
\newcolumntype{R}[1]{>{\PreserveBackslash\raggedleft\scriptsize$}p{#1}<{$}}

\theoremstyle{plain}
\newtheorem{theorem}{Theorem}[section]
\newtheorem{lemma}[theorem]{Lemma}
\newtheorem{proposition}[theorem]{Proposition}
\newtheorem{corollary}[theorem]{Corollary}
\newtheorem{conjecture}[theorem]{Conjecture}
\newtheorem{innermaintheorem}{Theorem}
\newenvironment{maintheorem}[1]
  {\renewcommand\theinnermaintheorem{\ref{#1}}\innermaintheorem}
  {\endinnermaintheorem}

\theoremstyle{definition}
\newtheorem{definition}[theorem]{Definition}
\newtheorem{remark}[theorem]{Remark}
\newtheorem{example}[theorem]{Example}

\DeclareMathOperator{\Hom}{Hom}

\DeclareMathOperator{\Aut}{Aut}
\DeclareMathOperator{\End}{End}
\newcommand{\id}{\mathrm{id}}
\DeclareMathOperator{\im}{im}
\DeclareMathOperator{\rk}{rk}

\DeclareMathOperator{\Mat}{Mat}

\DeclareMathOperator{\Gal}{Gal}
\DeclareMathOperator{\lcm}{lcm}

\newcommand{\ent}{\mathcal{O}}
\newcommand{\A}{\mathcal{A}}

\newcommand{\mapsfrom}{\mathrel{\reflectbox{\ensuremath{\mapsto}}}}
\newcommand{\tensor}{\otimes}

\newcommand{\Z}{\ensuremath{\mathbb{Z}}}
\newcommand{\Q}{\ensuremath{\mathbb{Q}}}
\newcommand{\R}{\ensuremath{\mathbb{R}}}

\newcommand{\F}{\ensuremath{\mathbb{F}}}
\newcommand{\m}{\ensuremath{\mathfrak{m}}}

\newcommand{\clq}{C}
\newcommand{\clqb}{D}
\newcommand{\crg}{k}
\newcommand{\nrg}{R}

\newcommand{\note}[1]{\textbf{\color{blue}#1}}
\newcommand{\inote}[1]{\textbf{\color{red}Ignacio: #1}}
\title
[On exceptional cliques in matrix rings]
{On exceptional cliques in matrix rings}
\author[Milan Boutros]{Milan Boutros} 
\author[Ignacio Cascudo]{Ignacio Cascudo} % IMDEA Software Institute 
\author{Ronald Cramer} % Centrum Wiskunde en Informatica, Universiteit Leiden
\author{Dani\"el van Gent} % Centrum Wiskunde en Informatica
\author[Chaoping Xing]{Chaoping Xing} % Shanghai Jiao Tong University
\begin{document}
%\linenumbers

\begin{abstract}
	We study the notion of exceptional clique, a subset of a ring such that the difference of any two distinct elements of the subset is invertible. Motivated by applications in cryptography, our main focus is to determine the largest size of an exceptional clique in the ring $\textup{Mat}_{n\times n}(\Z)$ of square $n\times n$ matrices over the integers, for every $n$.
	We obtain several results for the question above, both in the general case and the ``commutative'' case where we additionally require that the elements in the clique commute with each other. As highlights, we prove that, at least for some values of $n$, the largest exceptional cliques in $\textup{Mat}_{n\times n}(\Z)$ are necessarily non-commutative; we then show that for an infinite family of $n$, there are non-commutative exceptional cliques of size $n^2$, and that for every $n$ there are commutative exceptional cliques of size $\frac23 n+O(n^{\theta})$, for a constant $\theta>\frac{11}{20}$.
\end{abstract}	

\maketitle

\begin{comment}
Alternative abstract: 

We study the notion of exceptional clique, a subset of a ring such that the difference of any two distinct elements of the subset is invertible. Motivated by applications in cryptography, our main focus is to determine the largest size of an exceptional clique in the ring $\textup{Mat}_n(\Z)$ of square $n\times n$ matrices over the integers, for every $n$.
	We obtain several results regarding this problem, both in the general case and in the ``commutative'' case where we additionally require that the elements in the clique commute with each other.
	In the latter case, we show that the problem reduces to studying exceptional cliques in orders in number fields, and we prove that for every $n$ there are commutative exceptional cliques of size $\frac23 n+O(n^{\theta})$ for a constant $\theta>\frac{11}{20}$. We also derive upper bounds on commutative cliques via reduction modulo primes.
	In the general case, we construct an infinite family of $n$ for which there are non-commutative exceptional cliques of size $n^2$.
	We also record the largest exceptional cliques known to us in small dimensions, both in orders and in matrix rings. 
	Finally, we define a generalized notion of exceptional cliques in $\textup{Hom}(\Z^m,\Z^n)$ and prove that when $n+1$ is a safe prime and $0<m\le n/2$, optimal cliques of size $2^n$ exist.
\end{comment}

%!TEX root = main.tex

\section{Introduction}

An \emph{exceptional clique} in a ring \(R\) is a subset \(\clq\subseteq R\) such that \(u-v\) is a unit for all distinct \(u,v\in \clq\). Exceptional cliques in the context of number fields were first considered by Lenstra \cite{Lenstra}, whence we derive the term `exceptional clique'.
He showed that if the maximal order \(\mathcal{O}_K\) of a number field \(K\) contains a sufficiently large exceptional clique, in relation to its discriminant and other parameters, then \(K\) is Euclidean.
Subsequently he discovered many large-degree Euclidean number fields, of which up until then only very few were known.

Research in this direction was continued by \cite{Martinet,Leutbecher}.
They proved upper bounds on the \emph{clique number} \(\omega(\mathcal{O}_K)\), the size of the largest exceptional clique of \(\mathcal{O}_K\), for \(K\) of small degree. They also provided a family of number fields of unbounded degree that achieve the best known asymptotic growth of \(\omega(\mathcal{O}_K)\) as a function of the degree \(n\) of \(K\), namely \(2n+O(1)\).

% \emph{rate} 

%\note{Correct term? Inconsistent with def later} \inote{if we are not using it again maybe we can just say ``achieve the best known asymptotic growth of...''}, the .
% We have compiled a table of number fields of largest known Lenstra constants in small degree in Section~BLAH.

%Another context is graph theory.
%An exceptional clique of a ring \(R\) is indeed a clique in a graph theoretic sense: it is a clique of the graph with vertex set \(R\) and edge set \(\{\{x,y\}\,|\,x-y\in R^*\}\).
%If \(R\) is a finite field, one obtains Paley graphs, which have been studied by XYZ.
%In Section~BLAH we give alternative bounds on the clique number of Paley graphs. \inote{I am not completely sure about this paragraph, because I am not aware of what the consequences are of our work for Paley graphs. If they are not too important, I would postpone even refering to it until when we collect all our contributions.}
Beyond number fields, exceptional cliques in different types of rings have found a number of applications in cryptography, particularly in the areas of secret sharing, zero knowledge proofs, and authentication. These applications benefit from having  exceptional cliques which are as large as possible in a given ring. Usually, these areas have dealt with rings for which determining the largest exceptional cliques, and constructing them explicitly, is trivial. Indeed, the most usual case by far is that of a finite field, where the full space is an exceptional clique, and the property is used tacitly. More recently some works have considered cryptographic applications which require exceptional cliques in Galois rings~\cite{ACDEY19, DLS20, ACCDE22} or more generally, in rings of the form $(\Z/m\Z)[X]/(f(X))$ \cite{ACK21, AL21, BCFK21, CCKP22, GNS23, BC25}.\footnote{The cryptography literature has not been consistent in naming for exceptional cliques: in addition to ``exceptional sets'' or ``exceptional sequences'', they have received names such as ``subtractive sets'' \cite{AL21}, ``admissible sets'' \cite{CDN15}, ``sampling sets'' \cite{BCFK21, CCKP22} or ``strongly sampling sets''\cite{BC25}.} Determining the largest exceptional cliques in these rings is still trivial.\footnote{The largest exceptional clique of a Galois ring is given by its residue field, while rings of the form $(\Z/m\Z)[X]/(f(X))$ can be written as direct product of Galois rings and fields and the size of the largest clique is given by the minimum of those of the factors.}

%required to use exceptional cliques in Galois rings~\cite{ACDEY19, DLS20, ACCDE22}, where the largest clique is given by their residue field; or more generally, in rings of the form $(\Z/m\Z)[X]/(f(X))$ \cite{ACK21, AL21, BCFK21, CCKP22, BC25}, commonplace in lattice-based cryptography, and which can be written as direct product of Galois rings and fields; while works such as \cite{GNS23} have considered cryptographic tools that work over any commutative ring as long as it has a large enough exceptional clique. 

%The underlying property used in these works is that, for a commutative ring $R$, interpolation on any subset of $d+1$ elements of an exceptional clique induces a $R$-module isomorphism between the set of polynomials in $R[X]$ of degree at most $d$, and $R^{d+1}$ \cite[Theorem 11.124]{CDN15}. Aside from the more abstract use from \cite{GNS23}, all the aforementioned works have considered rings where maximal exceptional cliques are easy to determine.

However, recently some applications have arisen which have motivated the question of what is the largest exceptional clique in $\textup{Mat}_n(\Z)$, the ring of square $n\times n$ matrices over the integers. In particular, these applications deal with cryptographic tools such as black-box secret sharing~\cite{CF02, CFS05} and zero-knowledge proofs for hidden order groups \cite{CD09, BC24}. In Section~\ref{sec:exceptional_cliques_in_cryptography} we give more details on the history of these applications in cryptography.

The main goal of this work is to study the clique number $\omega(\textup{Mat}_n(\Z))$ and in particular
to give lower bounds for this number and produce explicit large exceptional cliques in \(\textup{Mat}_n(\Z)\). Moreover, we also study similar questions for ``commutative'' exceptional cliques, those cliques in \(\textup{Mat}_n(\Z)\) with the additional restriction that any two elements of the clique commute with each other. We denote the size of the largest exceptional clique of this type by $\omega_{\textup{com}}(\textup{Mat}_n(\Z))$.\\

We now summarize some of our results. 

We first focus on the aforementioned commutative case. Clearly, every order of rank \(n\) embeds in \(\textup{Mat}_n(\Z)\), and so do its cliques. In Proposition~\ref{prop:commutative_clique_to_nf} we show that each  \textit{commutative} clique in \(\textup{Mat}_n(\Z)\) can be embedded in an order $\mathcal{O}_K$ of rank at most \(n\). Upper bounds on the largest size of cliques in these orders will therefore imply upper bounds on $\omega_{\textup{com}}(\textup{Mat}_n(\Z))$.

There is a simple upper bound of \(\omega(\mathcal{O}_K)\leq 2^n\) for each number field \(K\) of degree \(n\geq 1\) \cite{Lenstra}. This bound is not sharp and \cite{Martinet} shows that \(\omega(\mathcal{O}_K)\leq 7 <2^3\) for every number field \(K\) of degree $3$ (and the bound is achieved), and consequently $\omega_{\textup{com}}(\textup{Mat}_3(\Z))=7$. We add to this with the following result.

\begin{maintheorem}{thm:computational_commutative}
	Exceptional cliques in orders of rank \(4\) and \(5\) have respectively at most \(10\) and \(11\) elements. These bounds are sharp.
\end{maintheorem}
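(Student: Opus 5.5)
Translate the problem to one about monic integer polynomials, then bound via reduction modulo small primes, then finish with a finite computer search.

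\textbf{Reduction to polynomials.} Let \(\clq=\{x_0,\dots,x_{m-1}\}\) be an exceptional clique in an order \(\ent\) of rank \(n\in\{4,5\}\). Translating, we may assume \(x_0=0\), so every \(x_i\) with \(i\neq 0\) is a unit. The clique generates a subring \(\Z[\clq]\subseteq \ent\) of rank at most \(n\), and it suffices to work there. Since \(\ent\) is an order in a product of number fields \(\prod K_j\), the projections of \(\clq\) to each factor remain exceptional cliques. A clique in a product is bounded by the clique number of each factor, and the factors have smaller degree. So the essential case is a single number field \(K\) of degree \(n\), or an order of rank \(n\) inside it.

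\textbf{Bounds from reduction modulo primes.} For a prime \(p\) and a prime \(\mathfrak p\mid p\) of residue degree \(f\), distinct clique elements must stay distinct modulo \(\mathfrak p\). Hence \(|\clq|\le \min_{\mathfrak p} N(\mathfrak p)\). Applying this to \(p=2,3\) over all splitting types gives \(|\clq|\le 2^n\) generally, with much smaller bounds unless \(2\) is inert or nearly inert.

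To sharpen this, I would run the combinatorial argument of Lenstra and Martinet. Consider the multiset of residues of \(\clq\) modulo \(2\ent\) in \(\ent/2\ent\), and similarly modulo \(3\). For each pair, \(u-v\) being a unit forces its norm to be \(\pm1\). Each \(x_i\) and each \(1-x_i\) (after normalising \(x_1=1\)) is a unit. This yields solutions of the unit equation \(\varepsilon+\delta=1\), of which there are only finitely many, effectively computable, for each field. The plan is a case split on the decomposition of \(2\) and \(3\) in \(K\):
\begin{itemize}
\item In every splitting pattern except those with \(N(\mathfrak p)\) large for all \(\mathfrak p\mid 2\) and all \(\mathfrak p\mid 3\), reduction directly gives at most \(10\) (respectively \(11\)).
\item In the remaining patterns (for instance \(2\) inert in degree \(4\), where \(|\ent/2\ent|=16\)), I would bound the discriminant. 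A clique of size \(m\) gives a Vandermonde-type product \(\prod_{i<j}(x_i-x_j)\) of units. Lenstra's argument bounds \(m\) against a covolume/discriminant estimate via a packing argument in the Minkowski embedding \(K\otimes\R\). Combined with Odlyzko-type discriminant lower bounds, this shows that a clique of size \(11\) in degree \(4\) (or \(12\) in degree \(5\)) forces \(|d_K|\) below an explicit bound \(B_n\).
\end{itemize}

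\textbf{Finite computation and sharpness.} For the finitely many fields with \(|d_K|\le B_n\) and the admissible splitting types, I would compute all exceptional units, i.e.\ solutions of \(\varepsilon+\delta=1\), using standard unit-equation solvers. Cliques containing \(0,1\) correspond to cliques in the graph on exceptional units, where \(\varepsilon,\eta\) are adjacent when \(\varepsilon-\eta\) is a unit. A maximal clique search in each such graph gives the exact value. Sharpness comes from exhibiting one field in each degree, found in this search, with cliques of size \(10\) and \(11\). Non-maximal orders only shrink the sets, so the maximal-order computation suffices.

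The main obstacle is making the discriminant bound \(B_n\) small enough that the field enumeration and the unit-equation solving are feasible, particularly in degree \(5\). I would first try Lenstra's volume argument with an optimised weighting of archimedean places, falling back on tighter local (\(2\)-adic and \(3\)-adic) counting if the bound is too large.
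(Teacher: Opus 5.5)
\textbf{There is a genuine gap at the step meant to make the problem finite.}

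\emph{The residue-field bound leaves infinitely many fields.} Reducing modulo $2$ and $3$ can only give $\#C\le\min_{\mathfrak p}N(\mathfrak p)$. The norm condition adds nothing there, since $\mathbb{F}_2^*$ and $\mathbb{F}_3^*$ are contained in $\{\pm1\}$. Infinitely many quartic fields escape this bound. For example, take the cyclic quartic subfield of $\mathbb{Q}(\zeta_q)$ for primes $q\equiv1\pmod 4$ modulo which $2,3,5,7,11$ are non-squares. In such a field every prime ideal has norm at least $13$.

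\emph{The discriminant bound you rely on is not available.} You need a bound $|d_K|\le B_n$ forced by a clique of size $11$ (resp.\ $12$), and you do not supply one.
\begin{enumerate}
\item Lenstra's inequality runs the other way. A clique that is large compared with $\sqrt{|d_K|}$ makes $K$ Euclidean; it bounds neither the clique size nor the discriminant from above.
\item Odlyzko's estimates are \emph{lower} bounds on $|d_K|$, so they cannot produce $B_n$.
\item For small clique sizes the discriminant is genuinely unbounded: take $\{0,1,u\}$ with $u$ a root of $X(X-1)g(X)\pm1$. Any such $B_n$ would therefore have to be specific to sizes beyond the true maximum, and would already contain the theorem.
\end{enumerate}
Moreover, the archimedean, unit-rank approach is exactly that of Leutbecher--Martinet. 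It fails precisely in the totally real quartic case (unit rank $3$), which is where the value $10$ is attained. Solving unit equations field by field cannot compensate, because there is no finite list of fields to run through.

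\textbf{The missing idea is to use the norm condition modulo a single, larger prime.} For any order $\mathcal{O}$ of rank $n$ and any prime $p$, set $A=\mathcal{O}/p\mathcal{O}$. A clique $C\subseteq\mathcal{O}$ gives a subset of this $n$-dimensional $\mathbb{F}_p$-algebra with $N_{A/\mathbb{F}_p}(x-y)=\pm1$ for distinct elements (Proposition~\ref{prop:LleqM}). Hence $\#C\le M(n,\mathbb{F}_p)$, uniformly in $\mathcal{O}$ and independently of how $p$ splits.

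This is a finite computation. There are only finitely many such algebras, and one may restrict to \'etale ones, i.e.\ products $\prod_i\mathbb{F}_{p^{f_i}}$ with $\sum_i f_i=n$. For $p\ge5$ the condition $N=\pm1$ is much stronger than $N\neq0$. The paper finds $M(4,\mathbb{F}_7)=10$ and $M(5,\mathbb{F}_{19})=11$.

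Sharpness then comes from explicit examples rather than from a search: the clique of Example~\ref{ex:nf_deg_4} in rank $4$, and $\mathbb{Z}[\zeta_{11}+\zeta_{11}^{-1}]$ in rank $5$.
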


As far as we know, however, there are no known stronger asymptotic bounds. Meanwhile, the lower bound $\omega(\mathcal{O}_K)\geq 2n+O(1)$ is very far from the upper bound $\omega(\mathcal{O}_K)\leq 2^n$. Moreover, this linear lower bound does not apply to every $n$ but only a concrete infinite family of integers $n$: concretely, we know that for $n$ of the form $(p-1)/2$, with $p$ an odd prime, $\Z[\zeta_p+\zeta_p^{-1}]$ where $\zeta_p$ is a primitive $p$-th root of unity, has rank $n$ and cliques of size $2n$; similarly, a worse, but still linear, bound applies to $\Z[\zeta_p]$, which has rank $n=p-1$ and cliques of size $n$. These facts do not immediately provide, in principle, guarantees that the bound applies uniformly to every (large enough) $n>0$; indeed it is important to note (Lemma~\ref{lem:matrix_embed}) that there is a ring homomorphism \(\textup{Mat}_m(\Z)\to\textup{Mat}_n(\Z)\) \emph{only if} \(m\mid n\), and thus a clique in \(\textup{Mat}_n(\Z)\) generally does not naturally embed into \(\textup{Mat}_{n+1}(\Z)\); similarly, it is generally not true that \(C^2\subseteq R^2\) is an exceptional clique when \(C\subseteq R\) is an exceptional clique; therefore it does not seem easy to determine whether \(\omega(\textup{Mat}_n(\Z))\) or $\omega_{\textup{com}}(\textup{Mat}_n(\Z))$ are monotonically increasing.

Faced with this state of affairs, we show the following lower bound for commutative cliques, which can be derived from an analytic number theory result by Matom\"aki, Maynard and Shao~\cite{Vinogradov}, and yields a linear lower bound which holds for \textit{every} $n>0$. 
%gives an easy improvement of the first kind.

\begin{maintheorem}{thm:goldbach}
Fix \(\theta > \frac{11}{20}\). For all \(n\geq 0\) there exists a commutative exceptional clique in \(\textup{Mat}_n(\Z)\) of size \(\frac{2}{3}n + O(n^{\theta})\).
\end{maintheorem}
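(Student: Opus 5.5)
We will build block-diagonal commutative cliques from real cyclotomic orders and use the ternary Goldbach result of Matom\"aki, Maynard and Shao~\cite{Vinogradov} to make the block sizes add up to \(n\).

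\textbf{Basic construction.} If \(R_1,\dots,R_k\) are commutative rings of ranks \(n_1,\dots,n_k\) with \(\sum n_i=n\), then \(R_1\times\dots\times R_k\) embeds in \(\textup{Mat}_n(\Z)\) as block-diagonal matrices, via the regular representation of each factor on its \(\Z\)-basis. A tuple is a unit exactly when every coordinate is a unit. So if \(C_i\subseteq R_i\) are exceptional cliques of a common size \(m\), we enumerate each \(C_i=\{c_{i,1},\dots,c_{i,m}\}\) and take the diagonal tuples \((c_{1,j},\dots,c_{k,j})\) for \(j=1,\dots,m\). The difference of two distinct such tuples is a unit in every coordinate, so this is an exceptional clique of size \(m\). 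It is commutative because the product ring is commutative. In general the product admits a clique of size \(\min_i |C_i|\).

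\textbf{Choosing the blocks.} For an odd prime \(p\), the ring \(\Z[\zeta_p+\zeta_p^{-1}]\) has rank \((p-1)/2\) and an exceptional clique of size about \(p-1\). One can take \(1,\ 1+\zeta+\zeta^{-1},\ \dots\), i.e.\ the cyclotomic units \((\zeta^{j}-\zeta^{-j})/(\zeta-\zeta^{-1})\) up to sign, giving \(2\cdot\mathrm{rank}+O(1)\). Alternatively, \(\Z[\zeta_p]\) of rank \(p-1\) has the clique \(\{0,1,\zeta,\dots,\zeta^{p-2}\}\) of size \(p\).

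To use all of \(n\), write \(2n+3=p_1+p_2+p_3\), or adjust parity so that an odd target is available, with primes \(p_i=\tfrac{2n}{3}+O(n^{\theta})\). Ternary Goldbach with almost equal primes, for any \(\theta>11/20\), is exactly the Matom\"aki–Maynard–Shao result. The real cyclotomic blocks then have ranks \((p_i-1)/2\) summing to \((2n+3-3)/2=n\). Each block has a clique of size about \(p_i-1\approx \tfrac23 n\), so the minimum over the three blocks is \(\tfrac23 n+O(n^\theta)\).

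\textbf{Checking the clique size.} The crux is to confirm that \(\Z[\zeta_p+\zeta_p^{-1}]\) contains an exceptional clique of size \(p-O(1)\). This is consistent with the claim \(2n\) for rank \(n=(p-1)/2\). The natural clique is the set of \(\zeta^{k}+\zeta^{-k}\) for \(k=0,\dots,(p-1)/2\), which has size \((p+1)/2\) but is too small by itself. Its differences \(\zeta^{a}+\zeta^{-a}-\zeta^{b}-\zeta^{-b}=-\zeta^{-a}(\zeta^{a-b}-1)(\zeta^{a+b}-1)\) are units for \(a\neq \pm b\). To double the size we would use Lenstra's or Leutbecher's construction and cite it rather than reprove it. We take as given the introduction's statement that cliques of size \(2n\) exist in rank \(n=(p-1)/2\).

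\textbf{Small \(n\) and parity.} For \(n\) below a fixed threshold the \(O(n^\theta)\) term absorbs everything, and we use the trivial clique \(\{0,1\}\), or the empty case when \(n=0\). The integer \(2n+3\) is odd, so ternary Goldbach applies directly. We must ensure \(p_i\geq 3\), so that \(p_i\neq 2\), which holds for large \(n\).

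The main obstacle is matching the precise form of the Matom\"aki–Maynard–Shao theorem: every large odd \(N\) is a sum of three primes each within \(N^{\theta}\) of \(N/3\). Here \(N=2n+3\) gives \(p_i=\tfrac{2n}{3}+O(n^\theta)\), and this yields the constant \(\tfrac23\).
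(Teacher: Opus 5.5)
Your proposal is the paper's proof. You write $2n+3=p_1+p_2+p_3$ via Matom\"aki--Maynard--Shao, take the Leutbecher--Niklasch clique of size $p_i-1$ in $\Z[\zeta_{p_i}+\zeta_{p_i}^{-1}]$ (rank $(p_i-1)/2$), and glue the three blocks diagonally as in Lemma~\ref{lem:trivial_induction} to get size $\min_i p_i-1$. Your explicit asides are wrong, though, because $\zeta^k-1$ is never a unit for $p\nmid k$: the set $\{\zeta^k+\zeta^{-k}\}$ and the set $\{0,1,\zeta,\dots,\zeta^{p-2}\}$ are not exceptional cliques (Lenstra's size-$p$ clique in $\Z[\zeta_p]$ uses the partial sums $1+\zeta+\dots+\zeta^k$). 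This does not affect the argument, since you end up citing Theorem~\ref{thm:nt_solution} exactly as the paper does.
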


We now turn to the non-commutative case, which gives arguably the most interesting results. We first remark that the general upper bound  $\omega(\Mat_n(\Z))\leq 2^n$ still applies for non-commutative cliques, see Corollary~\ref{cor:clique_bound}.

The first interesting fact is that this bound is \emph{achievable} for all $n\leq 4$, in particular,\footnote{We note that for $n\leq 2$,  $\omega(\Mat_n(\Z))=\omega_{\textup{com}}(\Mat_n(\Z))=2^n$ } $\omega(\Mat_3(\Z))=8$ and $\omega(\Mat_4(\Z))=16$. The former  was shown in \cite{BC24} and the latter was found shortly thereafter in unpublished work, in both cases computationally; here we contribute a more conceptual construction for cliques attaining these bounds (Example~\ref{ex:n4c16} and Example~\ref{ex:n3c8}).

Therefore, taking into account the upper bounds for the commutative case above, $\omega_{\textup{com}}(\Mat_3(\Z))\leq 7$ and $\omega_{\textup{com}}(\Mat_4(\Z))\leq 11$, this means that $\omega(\Mat_n(\Z))> \omega_{\textup{com}}(\Mat_n(\Z))$ for at least some values $n$. Motivated by this, we explore lower bounds for $\omega(\Mat_n(\Z))$ for diverse families of $n$, and make constructions that can beat the best lower bounds we know for $\omega_{\textup{com}}(\Mat_n(\Z))$. 

Using graph theory, we present a lower bound that is still linear but beats the known bounds in the commutative case for $n=p-1,p$ with $p$ an odd prime.

\begin{maintheorem}{thm:graphs}
For all odd primes \(p\) the rings \(\textup{Mat}_{p-1}(\Z)\) and \(\textup{Mat}_p(\Z)\) both contain an exceptional clique consisting of \(p+1\) symmetric matrices with coefficients in \(\{-2,-1,0,1\}\).
\end{maintheorem}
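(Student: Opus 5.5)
The plan is to realise the clique as reduced (weighted) Laplacians of graphs, so that each difference $M_i-M_j$ becomes a signed Laplacian whose determinant can be computed with the matrix-tree theorem. The shape of the statement supports this: the matrices are symmetric, the off-diagonal entries are small, and there is room on the diagonal for entries like $-2$.

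\textbf{Setup.} For a graph $G$ on the vertex set $V=\{\infty\}\cup\Z/p\Z$, let $L(G)$ be its Laplacian and $\widetilde L(G)$ the $p\times p$ matrix obtained by deleting the row and column of $\infty$. For two edge sets $G,H$ we have $\widetilde L(G)-\widetilde L(H)=\widetilde L_w$. Here $w$ is the weighting of the symmetric difference $G\triangle H$ with weight $+1$ on $G\setminus H$ and weight $-1$ on $H\setminus G$.

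\textbf{Key tool.} The weighted matrix-tree theorem gives
\[
\det \widetilde L_w=\sum_{T}\prod_{e\in T}w(e),
\]
where $T$ runs over the spanning trees of $G\triangle H$. If $G\triangle H$ is itself a spanning tree of $V$, this sum has exactly one term, equal to $\pm1$. Hence it suffices to find $p+1$ edge sets $G_0,\dots,G_p$ on $V$ whose pairwise symmetric differences are all spanning trees (so each has exactly $p$ edges). Diagonal corrections are allowed where needed; for instance one may also add to each matrix a fixed symmetric matrix, since differences do not see it. Such a shift can move the entry range into $\{-2,-1,0,1\}$.

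\textbf{Combinatorial core.} The construction should exploit the cyclic symmetry of $\Z/p\Z$. Take one base edge set $G_\infty$, and let the others be translates $G_a=G_0+a$ for $a\in\Z/p\Z$, possibly modified by edges to $\infty$. Translation invariance then reduces the requirement to two checks:
\begin{enumerate}
\item $G_0\triangle(G_0+a)$ is a spanning tree for every $a\neq0$;
\item $G_0\triangle G_\infty$ is a spanning tree.
\end{enumerate}
As a first candidate I would use edges $\{x,x+d\}$ for $d$ in a half-system of differences, together with a few edges to $\infty$. I would test the tree condition by counting edges (exactly $p$) and checking connectivity, using that translation by $a$ generates $\Z/p\Z$ because $p$ is prime.

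\textbf{Main obstacle.} I expect this combinatorial design step to be the hardest part. An edge-count parity argument rules out the naive choices, such as pairwise edge-disjoint sets or stars, so the base set must overlap its translates in a controlled way. If no uniform cyclic design works, I would fall back to a direct matrix construction. One could take $M_a=J$-type circulants plus rank-one corrections, for which $\det(M_a-M_b)$ is computable from circulant eigenvalues. The factorisation of $\prod_{k}(1-\zeta_p^k)=p$ would then be used to engineer determinant $\pm1$ after adjoining the extra vertex $\infty$.

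\textbf{The $(p-1)\times(p-1)$ case.} Delete a second vertex, say $0$, as well as $\infty$. By the all-minors matrix-tree theorem, $\det$ of this doubly reduced matrix equals the signed count of spanning $2$-forests separating $\infty$ from $0$. If $\infty$ and $0$ are adjacent in every symmetric difference tree, contracting that edge yields a spanning tree on $p$ vertices, and the determinant is again $\pm1$. So I would arrange the design so that the edge $\{\infty,0\}$ lies in every $G_i\triangle G_j$. Equivalently, this edge lies in exactly one of $G_i,G_j$ for each pair; that is impossible for $p+1\geq 3$ sets, so it needs the adjustment that $\infty$ is merged with $0$ from the start. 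In that case I would run the same construction on the $p$-vertex set $\Z/p\Z$ with $0$ grounded.
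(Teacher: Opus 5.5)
Your framework is the one the paper uses: realize the clique as reduced Laplacians and read off the determinants of differences from the signed matrix-tree theorem. However, the proposal has a genuine gap. You never construct the graphs, and that combinatorial design is the entire content of the theorem.

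Worse, the sufficient condition you reduce to cannot be met for $\Mat_p(\Z)$. On $V=\{\infty\}\cup\Z/p\Z$ a spanning tree has $p$ edges, an odd number. On the other hand, $\#(G_i\triangle G_j)\equiv \#G_i+\#G_j \pmod 2$, however $G_i$ and $G_j$ overlap. For any three graphs, the three sums $\#G_i+\#G_j$ add up to $2(\#G_1+\#G_2+\#G_3)$, which is even, so they cannot all be odd. Hence no three graphs on $p+1$ vertices have pairwise symmetric differences that are spanning trees. Your parity observation therefore rules out every design, not just the naive ones, and no controlled overlap can help.

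Your first candidate also fails for an independent reason. A circulant part on $\Z/p\Z$ is translation invariant, so $G_a\triangle G_b$ would be a star at $\infty$ for all $a,b\in\Z/p\Z$. It would then have to be the full star for every pair, but already three leaf sets cannot be pairwise complementary. The circulant-plus-rank-one fallback is too vague to assess.

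The missing idea is to allow differences that are not trees but still have signed spanning-tree count $\pm1$. The paper works on $\F_p$ with the matchings $P_a$, whose edges are $\{a+x,a-x\}$, together with the $p$-cycle $C$.
\begin{itemize}
\item $P_a-P_b$ is a Hamiltonian path with alternating signs, since translation by $2(b-a)$ generates $\F_p$.
\item $P_0-C$ is not a tree. Its reduced Laplacian is evaluated directly by the block recursion $\det\alpha(k)=\det\beta(k-1)$, $\det\beta(k)=-\det\alpha(k-1)$.
\end{itemize}
All cofactors of a Laplacian agree, so this gives the $p+1$ matrices in $\Mat_{p-1}(\Z)$. With the convention $\mathcal{L}(G)=G+D$ their entries lie in $\{-2,-1,0,1\}$.

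For $\Mat_p(\Z)$ the paper adds a vertex $\star$, isolated in $C^\star$ and joined to $a-1$ in $P_a^\star$.
\begin{itemize}
\item $P_0^\star-C^\star$ is $P_0-C$ plus a pendant edge, so its signed count is unchanged.
\item $P_0^\star-P_b^\star$ is unicyclic with an odd cycle. On that cycle the numbers of positive and negative edges differ by one, so its signed tree count is $\pm1$.
\end{itemize}
This is exactly the kind of non-tree difference your parity obstruction forces.
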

For $n=p-1$ this yields $\omega(\Mat_n(\Z))\geq n+2$, which is only slightly larger than the known bound $\omega_{\textup{com}}(\Mat_n(\Z))\geq n+1$. But for $n=p$ it is a real improvement, as the best we know in the commutative case is  $\omega_{\textup{com}}(\Mat_n(\Z))\geq \frac{2}{3}n + O(n^{\theta})$ (see above).

However, somewhat surprisingly, we show that we can find superlinear lower bounds for some infinite families of $n$. Our main result is the following theorem, which relies on number theory.

\begin{maintheorem}{th:main}
Let \(p,q\) be distinct odd primes such that \(p\not\equiv \pm 1\ (\textup{mod }q)\).
Then there exists an exceptional clique of size $(p-1)^2$ in $\Mat_{(q-1)(p-1)/4}(\Z)$.
\end{maintheorem}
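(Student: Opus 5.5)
The plan is to realize the clique inside a non-commutative order $\Lambda$ of $\Z$-rank $N=(q-1)(p-1)/4$, and then transport it to $\Mat_{N}(\Z)$. Let $\Lambda$ act on itself by left multiplication; after choosing a $\Z$-basis this gives an injective ring homomorphism $\Lambda\to\Mat_N(\Z)$. It sends units to units, so any exceptional clique of $\Lambda$ becomes an exceptional clique of $\Mat_N(\Z)$ of the same size. It therefore suffices to find an order of rank $N$ containing an exceptional clique of size $(p-1)^2$.

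The order should couple cyclotomic arithmetic at $p$ and at $q$. Let $\zeta_p,\zeta_q$ be primitive roots of unity. The basic commutative ingredient is the set
\[
S=\{\,1+\zeta_p+\dots+\zeta_p^{k} : 0\le k\le p-2\,\}\subseteq \Z[\zeta_p].
\]
This is an exceptional clique of size $p-1$, because the differences are $\zeta_p^{j}(1-\zeta_p^{m})/(1-\zeta_p)$, which are cyclotomic units. A product of two such cliques is not exceptional in a commutative product ring, which is why the construction must be non-commutative. I would work in a cyclic crossed-product algebra $A=(L/F,\sigma,\gamma)$ and take $\Lambda$ to be the natural order $\bigoplus_i \ent_L x^i$, with $x a x^{-1}=\sigma(a)$ and $x^m=\gamma$, where:
\begin{enumerate}
\item $L$ is a suitable subfield of $\Q(\zeta_p,\zeta_q)$ containing enough of $\Q(\zeta_p)$ to carry $S$;
\item $\sigma$ is a generator of a cyclic subgroup of the Galois group;
\item $\gamma$ is a cyclotomic unit in the fixed field $F$.
\end{enumerate}
The degrees are chosen so that $[L:\Q]\cdot m=N$. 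Candidate clique elements are $a+b\,y$, with $a,b$ ranging over $S$ (or over twists of $S$ by $\sigma$) and $y$ a fixed element of $\Lambda$ built from $x$. This gives $(p-1)^2$ elements.

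Next I would use that an element of an order in a central simple algebra is a unit exactly when its reduced norm is a unit of $\ent_F$. For a difference $(a-a')+(b-b')y$, there are two cases. When one of the two coordinate differences vanishes, the reduced norm is a norm of a cyclotomic unit, hence a unit. When both are nonzero, the reduced norm is a twisted resultant-type expression, such as $N_{L/F}(a-a')\pm\gamma N_{L/F}(b-b')$ in the simplest case $m=2$. I would try to show that this is, up to units, a value like $1-\zeta^{r}$ for a root of unity $\zeta$ of order divisible by both $p$ and $q$. Such elements are units, because the order is not a prime power.

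The hypothesis $p\not\equiv\pm1 \pmod q$ should enter here. It controls how $p$ splits in $\Q(\zeta_q)^+$, equivalently how the Frobenius at $p$ acts. I expect it to be what guarantees that $\sigma$ (or the twist defining $y$) moves the relevant roots of unity nontrivially, so that these mixed norms never degenerate to a non-unit such as a power of $1-\zeta_p$.

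The main obstacle is exactly this step: choosing $L,\sigma,\gamma,y$ so that every mixed reduced norm is provably a unit. As a first attempt I would take $m=2$, $L=\Q(\zeta_p)\cdot\Q(\zeta_q)^+$ cut down to the correct degree, and $\gamma$ a power of $\zeta_q$. I would then compute $N_{L/F}(a-a')\pm\gamma N_{L/F}(b-b')$ explicitly for $a,a',b,b'\in S$, aiming to factor it into cyclotomic units of composite conductor $pq$. If that fails, I would fall back to a tensor construction, $\ent\otimes\Lambda'$ with $\Lambda'$ a quaternion-type order, and repeat the norm computation there.
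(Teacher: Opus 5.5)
\textbf{There is a genuine gap, and it is structural rather than only the unfinished norm computation you flag at the end.}

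You commit to an order $\Lambda$ of $\Z$-rank exactly $N=(q-1)(p-1)/4$, embedded via its left-regular representation. You take $\Lambda=\bigoplus_{i<m}\mathcal{O}_L x^i$ with $m\ge 2$, and $L\supseteq\Q(\zeta_p)$ so that $L$ carries $S$. Then $\textup{rk}_{\Z}\Lambda=m[L:\Q]$ is a multiple of $p-1$ and is at least $2(p-1)$. But $N=p-1$ for $q=5$, which is the case behind Corollary~\ref{cor:q=5}. For $q=7$ we have $N=3(p-1)/2$, and in general $p-1\nmid N$ whenever $q\equiv 3\pmod 4$. So your setup, including the ``first attempt'' with $m=2$, does not exist for these $q$.

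Even where the degrees could be made to fit, the mixed reduced norms would look like $N_{L/F}(a-a')\pm\gamma N_{L/F}(b-b')$. Here the relative norms of units of $L$ are arbitrary units of $F$. You give no mechanism forcing all such sums to be units, and the proposal stops exactly there.

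\textbf{The missing idea.} The algebra does not need rank $N$. It only needs a nonzero module of $\Z$-rank $N$; Proposition~\ref{prop:injective_quotient} then keeps the clique injective and exceptional.

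The paper takes $K=\Q(\eta_q)$ and $L=K(\eta_p)$, so $[L:\Q]=N$. It uses the \emph{untwisted} skew group ring $\mathcal{A}_{L/K}=\bigoplus_i\mathcal{O}_L\sigma^i$ with $\sigma^{[L:K]}=1$. This ring acts on $\mathcal{O}_L$ and so maps to $\End_\Z(\mathcal{O}_L)\cong\Mat_N(\Z)$, even though its own rank is $[L:K]\cdot N$.

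The base clique is the Leutbecher--Niklasch clique $C\subseteq\Z[\eta_p]$ of size $p-1$. It lives in the real subfield, which has half the degree of your $\Z[\zeta_p]$. Since $\Q(\eta_p)$ is linearly disjoint from $K$, one gets $N_{L/K}(x-y)=N_{\Q(\eta_p)/\Q}(x-y)=\pm1$ \emph{exactly}.

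That exactness is what tames the mixed differences. When both coordinate differences are nonzero, $(x_1-x_2)+(y_1-y_2)u\sigma=a(1-bu\sigma)$ with $a\in\mathcal{O}_L^*$ and $N_{L/K}(b)=\pm1$. The identities $(c\sigma)^n=N_{L/K}(c)$ and $(1-c\sigma)\sum_{i<n}(c\sigma)^i=1-N_{L/K}(c)$ show it suffices that $\pm N_{L/K}(u)$ be exceptional units. No reduced-norm computation is needed.

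\textbf{Where the hypothesis enters.} With $u=\eta_p-\eta_q$ one computes
\[
v=N_{L/K}(u^2)=\frac{(\zeta_q^p-1)(\zeta_q^{-p}-1)}{(\zeta_q-1)(\zeta_q^{-1}-1)},\qquad 1-v=\zeta_q^{-p}\,\frac{(\zeta_q^{p+1}-1)(\zeta_q^{p-1}-1)}{(\zeta_q-1)(\zeta_q^{-1}-1)}.
\]
These are cyclotomic units of conductor $q$, not $pq$ as you anticipated. The hypothesis $p\not\equiv\pm1\pmod q$ is used precisely to guarantee $\zeta_q^{p\pm1}\neq 1$. Since $1-v=(1-N_{L/K}(u))(1+N_{L/K}(u))$, both $\pm N_{L/K}(u)$ are exceptional.

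Your intuition that the hypothesis concerns the splitting of $p$ in $\Q(\zeta_q)^+$ matches the converse the paper proves. If $p\equiv\pm1\pmod q$, then $p$ splits completely in $K$ and is totally ramified in $L/K$. Then $N_{L/K}(u)\equiv\pm1$ modulo the prime above $p$, so $N_{L/K}(u^2)$ can never be an exceptional unit.
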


If we take \(q=5\) in this theorem, we obtain \(\omega(\textup{Mat}_{p-1}(\Z))\geq (p-1)^2\) for all primes \(p\not\equiv \pm 1\ (\textup{mod }5)\). 
The best known cliques in \(\textup{Mat}_{n}(\Z)\) thus attain a quadratic rate, which is a significant improvement over the linear rate in orders.
It is not unlikely that, analogously to Theorem~\ref{thm:goldbach}, we can obtain a quadratic rate for all \(n\), not just \(n\) of the form \(p-1\), although with a worse constant (Remark~\ref{rem:goldbach}).
%In Section~\ref{} we argue heuristically that \(\omega(\textup{Mat}_n(\Z))=O(n^2)\).
It is interesting to note that the best known rate is again proportional to the rank of the ring seen as a \(\Z\)-module.

We have made some attempts to improve the results of Theorem~\ref{th:main}.
Firstly, the cliques in Theorem~\ref{th:main} are constructed from cyclic algebras, and we may generalize the construction to include twisted cyclic algebras (Proposition~\ref{prop:nt_square_clique_twisted}).
Although we were unable to produce asymptotic improvements, this construction produces the best known cliques in \(\textup{Mat}_n(\Z)\) for some values of \(n\). This is apparent in Section~\ref{sec:table_of_records}, where we present a table of best known cliques in \(\textup{Mat}_n(\Z)\) for small \(n\).
Secondly, we attempted recursive constructions of cliques.
Using coding theory we may turn a clique in \(\textup{Mat}_m(\Z)\) of size a prime power \(q\) into a clique of size \(q^2\) in \(\textup{Mat}_{n}(\Z)\) for \(n\geq m(q+1)/2\).
However, recursive application yields cliques of size only \(\Theta(n\log n)\) in \(\textup{Mat}_n(\Z)\) (Theorem~\ref{thm:coding}).
Another attempt at a recursive construction involves spreads (Theorem~\ref{thm:clique_doubling}).
Although the construction cannot in fact be applied recursively, it also produces some best known cliques for small \(n\).

We finish this summary by highlighting two additional results in our work: first, for small values of \(n\), it is possible to enumerate all number fields \(K\) of degree \(n\) for which \(\omega(\mathcal{O}_K)\) is maximal.
Up to a natural notion of equivalence of exceptional cliques (Definition~\ref{def:clique_equivalence}) we can even enumerate all maximal cliques (e.g., Example~\ref{ex:phi}).
In Section~\ref{sec:rank_2} we do the same for \(\textup{Mat}_2(\Z)\).

Finally, we consider a generalized notion of an exceptional clique, which also has applications in cryptography (see Section~\ref{sec:exceptional_cliques_in_cryptography}). An \emph{exceptional clique} in \(\Hom(\Z^m,\Z^n)\) is a subset \(\clq\) such that for any two distinct \(f,g\in \clq\) the map \(f-g\) has a left inverse. \footnote{In other words, it is a set of integer matrices of dimensions $n\times m$ where the difference of any two distinct matrices $M$, $N$ in the set has a left inverse, i.e. there exists a $m\times n$ integer matrix $A$ with $A(M-N)=I_{m}$ where $I_m$ is the $m\times m$ identity matrix.} If \(m=n\), then \(\Hom(\Z^m,\Z^n)\cong\textup{Mat}_n(\Z)\) and the two definitions agree. However, for some values $m<n$ this generalized notion of an exceptional clique achieves the optimal size $2^n$. Recall that a prime number \(p\) is \emph{safe} if \(p\) is odd and \((p-1)/2\) is prime.

\begin{maintheorem}{thm:rank-metric}
For every safe prime \(n+1\) and \(0<m\leq n/2\) there exists an exceptional clique in \(\Hom(\Z^m,\Z^n)\) of size \(2^n\).
\end{maintheorem}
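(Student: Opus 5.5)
The plan is to build the clique from multiplication maps in the cyclotomic ring $\mathcal{O}=\Z[\zeta_p]$ with $p=n+1$, and reduce the left-invertibility condition to a statement about polynomials over finite fields. Note that $\mathcal{O}$ has $\Z$-basis $1,\zeta,\dots,\zeta^{n-1}$, so $\mathcal{O}\cong\Z^n$.

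\textbf{Construction.} Let $V\subseteq\mathcal{O}$ be the $\Z$-span of $1,\zeta,\dots,\zeta^{m-1}$, and let $\iota\colon\Z^m\to\mathcal{O}$ be the corresponding inclusion, which is a saturated sublattice. Let $S\subseteq\mathcal{O}$ be the set of the $2^n$ elements $\sum_{i<n}\epsilon_i\zeta^i$ with $\epsilon_i\in\{0,1\}$. For $a\in S$ put $f_a(x)=a\cdot\iota(x)$, and take $\clq=\{f_a : a\in S\}\subseteq\Hom(\Z^m,\Z^n)$. These maps are pairwise distinct because $1\in V$.

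\textbf{Reduction to a statement mod primes.} An integer $n\times m$ matrix has a left inverse over $\Z$ iff its $m\times m$ minors generate $\Z$. Equivalently, its reduction mod $q$ has rank $m$ for every prime $q$. So I must show the following for every $c=a-b=\sum\epsilon_i\zeta^i\neq0$ with $\epsilon_i\in\{-1,0,1\}$ and every prime $q$: multiplication by $c$ on $\F_q[x]/(\Phi_p)$ has no nonzero kernel element of degree $<m$.

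Let $g=\gcd(\bar c,\Phi_p)$ in $\F_q[x]$ and $d=\deg g$. The kernel of multiplication by $\bar c$ is the ideal generated by $\Phi_p/g$. Its nonzero elements, reduced mod $\Phi_p$, have degree at least $n-d$. So it suffices to prove $d\le n-m$, and since $m\le n/2$ it is enough to show $d\le n/2$.

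Since some $\epsilon_i=\pm1$ and $\deg\bar c<n$, we have $\bar c\neq0$ mod $q$, so $d<n$. For $q\neq p$, the irreducible factors of $\Phi_p$ mod $q$ all have degree equal to the order $f$ of $q$ in $(\Z/p\Z)^*$, which is cyclic of order $n=2\ell$ with $\ell$ prime. Hence $f\in\{1,2,\ell,2\ell\}$, and $d$ is a multiple of $f$. Using this, I would handle the cases as follows.

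\begin{enumerate}
\item If $f=2\ell$, then $d=0$.
\item If $f=\ell$, then $d\in\{0,\ell\}$, and $\ell=n/2\le n-m$.
\item If $q=p$, then $\Phi_p\equiv(x-1)^n$. Here I need the multiplicity of the root $1$ in $\bar c$ to be at most $n-m$. I would bound it using derivatives and the small coefficients, or else perturb $V$ to a span of $\zeta$-powers avoiding this issue.
\end{enumerate}

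\textbf{Main obstacle.} The hard case is $f\in\{1,2\}$, i.e.\ $q\equiv\pm1\pmod p$. Then $q\ge 2p-1$, and $d>n-m$ would force $c$ to be divisible by more than $n/2$ of the degree-$f$ primes over $q$. I would first try a norm argument. Every conjugate satisfies $|\sigma(c)|\le n$, but the bound $q^{d}\le|N(c)|$ alone does not seem strong enough. I expect to supplement it with Galois structure: the group is cyclic of order $2\ell$, and the primes above $q$ form a single orbit. The idea is to combine $c$ with its conjugates, for example by passing to $c\bar c$ or to the norm down to the degree-$\ell$ subfield, to obtain a contradiction.

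If this fails, I would instead modify $S$. One option is a Gabidulin-type choice: take $S$ to be a set of representatives of $\mathcal{O}/\mathfrak{P}$ for a prime $\mathfrak{P}\mid 2$ of degree $\ge\ell$ whose differences have controlled factorization. Another is to choose $V$ adapted to the degree-$\ell$ subfield.
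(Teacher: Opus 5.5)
Your construction and reduction are the paper's: the same $0/1$-combinations of $\zeta^0,\dots,\zeta^{n-1}$ in $\Z[\zeta_p]$, restricted to the span of $1,\dots,\zeta^{m-1}$. Your criterion $d=\deg\gcd(\bar c,\Phi_p)\le n/2$ is exactly the paper's condition $v_q(c)\le n/2$, because $R/(cR+qR)\cong\F_q[x]/(g)$ has dimension $d$. Your treatment of primes with $f\in\{\ell,2\ell\}$ also matches the paper. However, you leave open exactly the two cases that need real input, $q=p$ and $q\equiv\pm1\pmod p$. As written, this is not yet a proof.

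The missing idea is a sharper bound on the norm. Your instinct to use $q^d\le|N(c)|$ was right; it holds because $R/(cR+qR)$ is a quotient of $R/cR$. The trouble is that the trivial estimate $|\sigma(c)|\le n$ only gives $|N(c)|\le n^n$, which is too weak. Use orthogonality of characters instead. For $c=\sum_{i<n}\epsilon_i\zeta^i$ with $\epsilon_i\in\{-1,0,1\}$, and using $|i-j|<p$,
\[\sum_{k=0}^{p-1}\bigl|c(\zeta^k)\bigr|^2=\sum_{i,j}\epsilon_i\epsilon_j\sum_{k=0}^{p-1}\zeta^{k(i-j)}=p\sum_i\epsilon_i^2\le pn .\]
Drop the $k=0$ term and apply AM--GM to the remaining $n=p-1$ conjugates:
\[|N(c)|^2=\prod_{k=1}^{p-1}\bigl|c(\zeta^k)\bigr|^2\le\Bigl(\frac{pn}{p-1}\Bigr)^{p-1}=p^{n}.\]
Hence $q^d\le p^{n/2}$, so $d\le\frac n2\log_q p\le\frac n2$ for every prime $q\ge p$.

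This single estimate (Lemma~\ref{lem:norm_bound} in the paper) settles both $q=p$ and every $q\equiv\pm1\pmod p$, since the latter satisfy $q>p$. No Galois-orbit argument, derivative computation, or change of $S$ or $V$ is needed. The safe-prime hypothesis is used only for primes $q<p$. For those, $q\not\equiv\pm1\pmod p$ is automatic, so $f\in\{\ell,2\ell\}$, and your case analysis finishes the argument.
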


It is conjectured that there are infinitely many safe primes, see for example \cite{Shoup}, which would make the result above hold for infinitely many $n$.\\

We give an overview of our sections.
Section~\ref{sec:preliminaries} consists of preliminaries and basic observations. In Sections~\ref{sec:commutative},~\ref{sec:uppercommutative} we focus on commutative cliques: in Section~\ref{sec:commutative} we first prove results on small commutative cliques, then show the aforementioned asymptotic lower bound (Theorem \ref{thm:goldbach}) and then prove that every commutative clique arises from an order; based on the latter, in Section~\ref{sec:uppercommutative} we show several upper bounds for the size of commutative cliques.
Section~\ref{sec:spreads} uses the concept of spreads which not only allows us to provide a simpler proof of the aforementioned \(2n+O(1)\) bound for commutative cliques from \cite{Leutbecher} but also leads to a lower bound for the size of non-commutative cliques in terms of those of commutative cliques, in certain dimensions.
In Sections~\ref{sec:cyclicalgebras},~\ref{sec:coding},~\ref{sec:graph} we prove asymptotic lower bounds for the size of (not necessarily commutative) exceptional cliques based respectively on results from cyclic algebras, coding theory and graph theory. In Section~\ref{sec:rank_2} we classify all maximal cliques in \(\textup{Mat}_2(\Z)\) up to conjugation. In Section~\ref{sec:miscellanea} we show some additional results that do not fit in the previous sections. In Section~\ref{sec:table_of_records} we present tables with the largest cliques known to us in small dimensions, both in orders and in matrix rings. In Section~\ref{sec:rankmetric} we consider the more general notion of an exceptional clique in \(\textup{Hom}(\Z^m,\Z^n)\). Finally in Section~\ref{sec:exceptional_cliques_in_cryptography} we provide an overview of the applications of these notions in cryptography.

%\input{introduction1}

% !TEX root = ../main.tex

\section{Preliminaries}\label{sec:preliminaries}

%\begin{definition}
Throughout this work, rings are assumed to be unital. Let \(\nrg\) be a ring.

\begin{definition}
An \emph{exceptional clique} in \(\nrg\) is a subset \(\clq\subseteq \nrg\) such that for all distinct \(u,v\in \clq\) we have \(u-v\in \nrg^*\). We say an exceptional clique is \emph{commutative} if its elements commute pairwise. We denote \(\omega(R):=\sup\{\#C: C\subseteq R \textrm{ is an exceptional clique} \}\) and \(\omega_{\textup{com}}(\nrg)\) the same supremum restricted to commutative cliques.
An \emph{exceptional unit} of \(\nrg\) is an element \(u\in \nrg^*\) such that \(1-u\in \nrg^*\).
\end{definition}

%The \emph{Lenstra constant} \(L(\nrg)\) of \(\nrg\) is the maximal cardinality of an exceptional clique in \(\nrg\). For \(n\geq0\) we abbreviate \(L(\textup{Mat}_n(\Z))\) as \(L(n)\).
%\end{definition}

\begin{proposition}\label{prop:injective_quotient}
	Let \(\nrg\) be a ring and \(\clq\subseteq \nrg\) an exceptional clique. Then
	\begin{enumerate}[nosep]
		\item For every non-zero left-\(\nrg\)-module homomorphism \(\nrg\to M\) the restriction to \(\clq\) is injective.
		\item For every ring homomorphism \(\nrg\to S\) the image of \(\clq\) is an exceptional clique.
	\end{enumerate}
\end{proposition}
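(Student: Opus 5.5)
Both parts follow directly from the definitions, so the plan is to check each one by hand.

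For (i), let \(\varphi\colon \nrg\to M\) be a non-zero homomorphism of left \(\nrg\)-modules. Suppose \(u,v\in\clq\) are distinct with \(\varphi(u)=\varphi(v)\). By additivity, \(\varphi(u-v)=0\). Since \(u-v\) is a unit, for every \(r\in\nrg\) we can write \(r=(r(u-v)^{-1})(u-v)\). Left \(\nrg\)-linearity then gives
\[
\varphi(r)=r(u-v)^{-1}\varphi(u-v)=0 .
\]
So \(\varphi\) vanishes on all of \(\nrg\), contradicting \(\varphi\neq 0\). Hence the restriction of \(\varphi\) to \(\clq\) is injective.

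The one point needing care in (i) is that we factor \(r\) with the unit on the right. That way \(\varphi\) pulls out the left scalar \(r(u-v)^{-1}\), which is exactly what left-module linearity allows.

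For (ii), let \(f\colon\nrg\to S\) be a ring homomorphism. Unital rings are assumed throughout, so \(f(1)=1\), and therefore \(f\) maps units to units: \(f(x)f(x^{-1})=f(1)=1\) and likewise on the other side.

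Now take two elements of \(f(\clq)\) that are distinct, say \(f(u)\neq f(v)\) with \(u,v\in\clq\). Then \(u\neq v\), so \(u-v\in\nrg^*\). Hence \(f(u)-f(v)=f(u-v)\in S^*\), which shows \(f(\clq)\) is an exceptional clique.

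Part (ii) does not claim that \(f\) is injective on \(\clq\). Note also that if \(S\) is the zero ring, the image is the single point \(\{0\}\), which is trivially a clique. This is consistent with (i): when \(S\neq 0\), regarding \(S\) as a left \(\nrg\)-module via \(f\), the map \(f\) is a non-zero module homomorphism, so (i) gives injectivity on \(\clq\) in that case.
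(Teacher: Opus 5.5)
Your proof is correct and follows the paper's argument. In (i), a unit \(u-v\) in the kernel forces the module homomorphism to vanish; you spell out the factorization \(r=(r(u-v)^{-1})(u-v)\) that the paper leaves implicit. In (ii), unital ring homomorphisms preserve units.
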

Note that if \(f:\nrg\to S\) is a non-zero ring homomorphism, then \(S\) is a left-\(\nrg\)-module and \(f\) is an \(\nrg\)-module homomorphism to which part~(i) of the proposition also applies.
\begin{proof}
	Let \(f:\nrg\to M\) be an \(\nrg\)-module homomorphism. Suppose \(f(x)=f(y)\) for distinct \(x,y\in \clq\). Then \(f(x-y)=0\) where \(x-y\in \nrg^*\). Hence \(f=0\), proving (i). For (ii), note that ring homomorphisms preserve units.
\end{proof}

\begin{corollary}\label{cor:clique_bound}
	Let \(\crg\) be a commutative ring and let \(n\in\Z_{\geq 0}\). Then
	\begin{enumerate}[nosep]
		\item if \(\crg\) is a finite field, then \(\omega(\textup{Mat}_n(\crg))\leq\#\crg^n\).
		\item if \(\crg=\Z\), then \(\omega(\textup{Mat}_n(\crg))\leq 2^n\).
	\end{enumerate}
\end{corollary}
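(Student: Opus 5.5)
Both parts follow from Proposition~\ref{prop:injective_quotient}(i), applied to a suitable non-zero left-module homomorphism out of \(\textup{Mat}_n(\crg)\) whose target has the desired size.

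For part~(i), let \(\crg\) be a finite field and consider the map \(\textup{Mat}_n(\crg)\to \crg^n\), \(A\mapsto Ae_1\), sending a matrix to its first column. Here \(\crg^n\) is viewed as column vectors with \(\textup{Mat}_n(\crg)\) acting by left multiplication. This map is a left-\(\textup{Mat}_n(\crg)\)-module homomorphism, since \((BA)e_1=B(Ae_1)\). For \(n\geq 1\) it is non-zero, because it sends the identity to \(e_1\). By Proposition~\ref{prop:injective_quotient}(i), its restriction to any exceptional clique \(\clq\) is injective, so \(\#\clq\leq\#\crg^n\). The case \(n=0\) is trivial: \(\textup{Mat}_0(\crg)\) is the zero ring, so every clique has at most \(1=\#\crg^0\) element.

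For part~(ii), take \(\crg=\Z\) and compose the reduction ring homomorphism \(\textup{Mat}_n(\Z)\to\textup{Mat}_n(\F_2)\) with the first-column map above. This gives \(\textup{Mat}_n(\Z)\to\F_2^n\), \(A\mapsto (A \bmod 2)e_1\). It is a left-\(\textup{Mat}_n(\Z)\)-module homomorphism, where \(\textup{Mat}_n(\Z)\) acts on \(\F_2^n\) through reduction mod \(2\), and it is non-zero for \(n\geq1\) since the identity maps to \(e_1\). Proposition~\ref{prop:injective_quotient}(i) again gives injectivity on cliques, so \(\omega(\textup{Mat}_n(\Z))\leq 2^n\).

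Alternatively, Proposition~\ref{prop:injective_quotient}(ii) shows that the image of a clique under reduction mod \(2\) is a clique in \(\textup{Mat}_n(\F_2)\). Reduction mod \(2\) is itself a non-zero ring homomorphism, so it is injective on cliques, and part~(i) with \(\#\F_2^n=2^n\) then gives the same bound.

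The only point needing care is that the first-column map really is a left-module homomorphism. With column vectors and left multiplication, the identity \((BA)e_1=B(Ae_1)\) settles this immediately, so there is no real obstacle.
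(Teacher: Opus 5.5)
Your proposal is correct and follows the paper's proof: for (i) the paper applies Proposition~\ref{prop:injective_quotient}(i) to \(M\mapsto Mv\) for a fixed non-zero \(v\in\crg^n\), and you take \(v=e_1\). For (ii) the paper reduces modulo \(2\) and invokes part (i), which is your alternative route; your primary route merely folds the reduction and the first-column map into a single module homomorphism, and your separate treatment of \(n=0\) is a harmless extra detail.
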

\begin{proof}
	Apply part (i) and (ii) of Proposition~\ref{prop:injective_quotient} to \(\textup{Mat}_n(\crg)\to \crg^n\), given by \(M\mapsto Mv\) for any fixed non-zero \(v\in \crg^n\), and \(\textup{Mat}_n(\Z)\to \textup{Mat}_n(\Z/2\Z)\) respectively.
\end{proof}

An \emph{order} is an integral domain whose additive group is free of finite rank. By the corollary, an exceptional clique in an order of rank \(n\) has size at most \(2^n\).
For an exceptional clique \(\clq\subseteq\textup{Mat}_n(\Z)\) we define its \emph{rate} to be \(\#\clq/n\).

\begin{lemma}\label{lem:trivial_induction}
	Let \(\nrg\) and \(S\) be rings, \(\crg\) a commutative ring, and \(m,n\in\Z_{\geq0}\). Then
	\begin{enumerate}[nosep]
		\item \(\omega(\nrg\times S)\geq \min\{\omega(\nrg),\omega(S)\}\).
		
		\item \(\omega(\textup{Mat}_{m+n}(\crg))\geq \min\{\omega(\textup{Mat}_m(\crg)),\omega(\textup{Mat}_n(\crg))\}\).
	\end{enumerate}
	Both statements are also true when we replace \(\omega\) by \(\omega_{\textup{com}}\).
\end{lemma}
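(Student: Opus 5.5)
For (i), the natural idea is to pair up elements of cliques in the two factors. Let \(\clq\subseteq\nrg\) and \(\clqb\subseteq S\) be exceptional cliques and put \(N=\min\{\#\clq,\#\clqb\}\). Choose injections \(\alpha:\{1,\dots,N\}\to\clq\) and \(\beta:\{1,\dots,N\}\to\clqb\), and set
\[
E=\{(\alpha(i),\beta(i)) : 1\le i\le N\}\subseteq \nrg\times S.
\]
Now take \(i\neq j\). Then \(\alpha(i)-\alpha(j)\in\nrg^*\) and \(\beta(i)-\beta(j)\in S^*\). Since \((\nrg\times S)^*=\nrg^*\times S^*\), the difference of the two pairs is a unit. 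So \(E\) is an exceptional clique, and it has \(N\) elements because \(\alpha\) is injective.

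To pass to suprema, handle the infinite case by running the same argument with \(N\) any finite number not exceeding both cardinalities. Then \(\omega(\nrg\times S)\ge N\) for every such \(N\), which gives the bound.

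For the commutative version, assume \(\clq\) and \(\clqb\) are commutative. Multiplication in \(\nrg\times S\) is componentwise, so the elements of \(E\) commute pairwise, and the same bound holds for \(\omega_{\textup{com}}\).

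For (ii), use the block-diagonal embedding
\[
\iota:\textup{Mat}_m(\crg)\times\textup{Mat}_n(\crg)\to\textup{Mat}_{m+n}(\crg),\qquad (A,B)\mapsto\begin{pmatrix}A&0\\0&B\end{pmatrix}.
\]
This map is a unital ring homomorphism, so Proposition~\ref{prop:injective_quotient}(ii) says it sends exceptional cliques to exceptional cliques. It is also injective, so cardinalities are preserved. Being a ring homomorphism, it also preserves commutation.

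Combining this with part (i) gives
\[
\omega(\textup{Mat}_{m+n}(\crg))\ge\omega(\textup{Mat}_m(\crg)\times\textup{Mat}_n(\crg))\ge\min\{\omega(\textup{Mat}_m(\crg)),\omega(\textup{Mat}_n(\crg))\},
\]
and the same chain holds with \(\omega_{\textup{com}}\) in place of \(\omega\).

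The only point that needs any care is the degenerate case \(m=0\) or \(n=0\). There \(\textup{Mat}_0(\crg)\) is the zero ring. In the zero ring \(0\) is a unit, so every subset is a clique, but the ring has only one element, giving \(\omega=1\). The statement and the argument above still go through unchanged in this case.
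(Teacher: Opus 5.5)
Your proof is correct and is essentially the paper's argument: pair the elements of two equal-size cliques by a bijection to get a clique in $\nrg\times S$ (using $(\nrg\times S)^*=\nrg^*\times S^*$), then push it into $\textup{Mat}_{m+n}(\crg)$ along the block-diagonal ring homomorphism via Proposition~\ref{prop:injective_quotient}. Your extra care about infinite suprema and the degenerate case $m=0$ or $n=0$ is sound, though the paper does not need to spell it out.
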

\begin{proof}
	For (i), if \(\clq\subseteq \nrg\) and \(\clqb\subseteq S\) are exceptional cliques of the same size and $f\colon \clq\to\clqb$ is a bijection, consider the exceptional clique $\{(x,f(x)) \mid x\in\clq\}\subset \nrg\times S$. For (ii), we combine (i) with the natural ring homomorphism \(\textup{Mat}_m(\crg)\times\textup{Mat}_n(\crg)\to\textup{Mat}_{m+n}(\crg)\) and Proposition~\ref{prop:injective_quotient}. The final statement follows trivially.
\end{proof}

%\begin{remark}
One might assume that if \(\clq\subseteq \nrg\) and \(\clqb\subseteq S\) are exceptional cliques, then \(\clq\times \clqb\subseteq \nrg\times S\) is an exceptional clique. However, \((x,u)-(x,v)\) is not a unit for \(x\in \clq\) and distinct \(u,v\in \clqb\). As the following lemma shows, we cannot naturally embed \(\textup{Mat}_n(\Z)\) into \(\textup{Mat}_{n+1}(\Z)\), and thus cannot expect a clique in the former to naturally embed into the latter.

\begin{lemma}\label{lem:matrix_embed}
	For \(m,n\geq 0\), there is a ring homomorphism \(\textup{Mat}_m(\Z)\to\textup{Mat}_n(\Z)\) if and only if \(m\mid n\).
\end{lemma}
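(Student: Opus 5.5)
The plan is to prove both directions directly, with ring homomorphisms taken to be unital as in the paper's convention.

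For the ``if'' direction, suppose $n = mk$. I will use the block-diagonal map $A\mapsto \mathrm{diag}(A,\dots,A)$ with $k$ copies, i.e.\ $A\mapsto A\otimes I_k$ up to a permutation of the basis. This is visibly additive and multiplicative and sends $I_m$ to $I_n$. (The degenerate case $m=0$, $n=0$ is trivial.)

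For the ``only if'' direction, let $f\colon \textup{Mat}_m(\Z)\to\textup{Mat}_n(\Z)$ be a unital ring homomorphism. The idea is to turn $\Z^n$ into a module over $\textup{Mat}_m(\Z)$ and then count ranks.
\begin{itemize}
\item Via $f$, the group $\Z^n$ becomes a left $\textup{Mat}_m(\Z)$-module $M$, free of rank $n$ over $\Z$.
\item Tensor with $\Q$: $M_\Q=\Q^n$ is a unital module over $\textup{Mat}_m(\Q)$. This ring contains $\textup{Mat}_m(\Z)\otimes\Q$, and the action extends because $f$ is additive, hence $\Z$-linear, and so extends $\Q$-linearly.
\item Since $\textup{Mat}_m(\Q)$ is simple Artinian, every module over it is a direct sum of copies of the unique simple module $\Q^m$. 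Comparing $\Q$-dimensions gives $n = m\cdot r$, so $m\mid n$.
\end{itemize}

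If I want to avoid Wedderburn theory, I will use the matrix units $E_{ii}$ instead.
\begin{itemize}
\item The images $f(E_{ii})$ are orthogonal idempotents summing to $I_n$.
\item They are pairwise conjugate, since $E_{ij}E_{ji}=E_{ii}$ and $E_{ji}E_{ij}=E_{jj}$.
\item The map $f(E_{ji})$ restricts to an isomorphism from $\im f(E_{ii})$ to $\im f(E_{jj})$, with inverse $f(E_{ij})$.
\end{itemize}
So $\Z^n$ decomposes as a direct sum of $m$ isomorphic subgroups. Each is a direct summand of $\Z^n$, hence free, and they all have the same rank $r$. Therefore $n=mr$.

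The main obstacle is only the case bookkeeping: unitality is essential, since without it $A\mapsto \mathrm{diag}(A,0)$ would be a homomorphism for any $m\le n$, and the $m=0$ case needs a separate word. When $m=0$ the source is the zero ring, so a unital homomorphism forces $I_n=0$, hence $n=0$, consistent with $0\mid n$ only for $n=0$. I will use the idempotent argument as the main line, since it is elementary and works directly over $\Z$.
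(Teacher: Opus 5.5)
Your proof is correct, and for the converse it takes a different route from the paper. The paper's converse rests on a single element.
\begin{enumerate}
\item It takes the companion matrix of the Eisenstein polynomial $X^m-2$.
\item Its image in $\textup{Mat}_n(\Z)$ is again a root of $X^m-2$; this is where unitality is used.
\item So the characteristic polynomial of the image has degree $n$ and divides a power of the irreducible $X^m-2$. It is therefore a power of it, whence $m\mid n$.
\end{enumerate}

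You instead use the full system of matrix units to split $\Z^n$ into the $m$ mutually isomorphic direct summands $\im f(E_{ii})$, and then count ranks. The paper's trick is shorter and needs no module theory. However, it depends on $\textup{Mat}_m(\Z)$ containing an element whose minimal polynomial is irreducible of degree $m$.

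Your argument is structural and buys more:
\begin{enumerate}
\item It adapts to any nonzero commutative base ring, replacing freeness by the rank of a projective module at a residue field.
\item This includes algebraically closed fields, where the companion-matrix trick has nothing to say.
\item With a little more work it shows that every unital homomorphism is conjugate in $\textup{GL}_n(\Z)$ to $A\mapsto\mathrm{diag}(A,\dots,A)$. To see this, take a basis of $\im f(E_{11})$ and transport it by the maps $f(E_{i1})$.
\end{enumerate}
You also treat $m=0$ and make unitality explicit, both of which the paper leaves implicit. Your Wedderburn variant via $\Q\otimes\Z^n$ is also fine.

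One small quibble concerns your second bullet. The relations $E_{ij}E_{ji}=E_{ii}$ and $E_{ji}E_{ij}=E_{jj}$ show that the idempotents are equivalent, not that they are conjugate. They are in fact conjugate, via the image of a permutation matrix, but only your third bullet is needed.
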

\begin{proof}
	If \(m\mid n\), then we have a morphism \(\textup{Mat}_m(\Z)\to(\textup{Mat}_m(\Z))^{n/m}\to\textup{Mat}_n(\Z)\). Conversely, suppose a morphism exists.
	Let \(f\in\Z[X]\) be monic irreducible of degree \(m\), say \(X^m-2\), and let \(M\) be its companion matrix. The image of \(M\) in \(\textup{Mat}_n(\Z)\) is a root of \(f\), so its characteristic polynomial divides some power of \(f\), and must be equal to some power of \(f\) by irreducibility.
	Hence \(m\mid n\).
\end{proof}

%\end{remark}

%\begin{definition}
%Let \(R\) be a ring. We write \(R^\textup{op}\) for the \emph{opposite ring} of \(R\), which is the ring where multiplication is given by \((x,y)\mapsto y\cdot x\). We write \(Z(R)=\{x\in R \mid (\forall y \in R)\ xy=yx\}\) for the \emph{center} of \(R\). We write \(R^\textup{en}=R\tensor_{Z(R)} R^\textup{op}\) for the \emph{enveloping ring} of \(R\), and we equip \(R\) with a natural \(R^\textup{en}\)-module structure given by \((a\tensor b) \cdot x \mapsto axb\). Note that if \(R\) is commutative, all objects constructed are simply \(R\). We define the \emph{affine group} of \(R\) to be \(\textup{Aff}(R)=R\rtimes (R^\textup{en})^*\).
%\end{definition}

We may now briefly study some symmetries of the set of exceptional cliques of $\nrg$. There are three natural group actions on $\nrg$: translation by elements of $\nrg$, left-multiplication by elements of $\nrg^*$, and action by automorphisms in $\Aut(\nrg)$. They induce actions on subsets of $\nrg$ and one easily checks that they preserve the exceptional clique property. We combine these three actions into one.

\begin{lemma}\label{lem:equivalent_clique}
The group \(G(\nrg)=\nrg\rtimes \nrg^* \rtimes\Aut(\nrg)\) acts on $R$ through translation, left-multiplication, automorphism, and preserves exceptional cliques. \qed
\end{lemma}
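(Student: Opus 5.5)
We need to show two things: that \(G(\nrg)=\nrg\rtimes\nrg^*\rtimes\Aut(\nrg)\) is a group acting on \(\nrg\) via the three named operations, and that this action sends exceptional cliques to exceptional cliques. The approach is to write down the action explicitly and check the group structure by composing maps of \(\nrg\). Concretely, to a triple \((r,u,\sigma)\) with \(r\in\nrg\), \(u\in\nrg^*\), \(\sigma\in\Aut(\nrg)\), associate the map
\[
\phi_{(r,u,\sigma)}\colon x\mapsto r+u\,\sigma(x).
\]
Each such map is a bijection of \(\nrg\), with inverse \(y\mapsto \sigma^{-1}(u^{-1}(y-r))\).

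First I would verify closure under composition:
\[
\phi_{(r,u,\sigma)}\circ\phi_{(s,v,\tau)}(x)=r+u\sigma(s)+u\sigma(v)\,\sigma\tau(x).
\]
This is again of the stated form, with triple \((r+u\sigma(s),\,u\sigma(v),\,\sigma\tau)\); here \(u\sigma(v)\in\nrg^*\) because automorphisms preserve units. This formula is exactly the multiplication law of the iterated semidirect product, in which \(\Aut(\nrg)\) acts on \(\nrg^*\), and \(\nrg^*\rtimes\Aut(\nrg)\) acts on the additive group \(\nrg\) via \((u,\sigma)\cdot s=u\sigma(s)\). So rather than checking associativity abstractly, I would simply \emph{define} \(G(\nrg)\) as the set of triples with this law. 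Then \(G(\nrg)\) is isomorphic to the subgroup of \(\mathrm{Sym}(\nrg)\) consisting of the maps \(\phi\), and associativity and inverses are inherited from composition. Faithfulness is not needed, but it holds: evaluating at \(x=0\) recovers \(r\), evaluating at \(x=1\) then recovers \(u\), and from these \(\sigma\) is determined. This bookkeeping is the only mildly fiddly step.

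Second, I would check clique preservation. Let \(\clq\) be an exceptional clique and \(x\neq y\) in \(\clq\). Then
\[
\phi(x)-\phi(y)=u\,\sigma(x-y),
\]
and \(\phi(x)\neq\phi(y)\) since \(\phi\) is injective. Now \(\sigma(x-y)\) is a unit, because \(\sigma\) is a ring automorphism and \(x-y\in\nrg^*\). Hence \(u\,\sigma(x-y)\) is a product of units and is itself a unit. So \(\phi(\clq)\) is an exceptional clique of the same size.

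Finally, if \(\clq\) is commutative, then \(\phi(\clq)\) need not be: left-multiplying by a non-central \(u\) can destroy commutativity. The lemma only claims preservation of exceptional cliques, so this causes no problem for the statement.
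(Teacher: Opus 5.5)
Your proposal is correct and is the same argument the paper has in mind. The paper leaves the lemma as ``one easily checks''. The check is precisely your identity \(\phi(x)-\phi(y)=u\,\sigma(x-y)\), combined with the facts that translations, unit multiples and ring automorphisms preserve units.

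One small point: if you inherit associativity from \(\mathrm{Sym}(\nrg)\), the faithfulness you verify is actually used there, so it is not quite ``not needed''. Alternatively, you can cite the standard semidirect-product group structure directly.
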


If $\nrg$ is non-commutative, right-multiplication by elements in $\nrg^*$ is different from left-multiplication, but this action gets accounted for through left-multiplication combined with interior automorphisms.

\begin{definition}\label{def:clique_equivalence}
	We say two cliques are \emph{equivalent} if they are in the same orbit under \(\textup{G}(\nrg)\). 
	We say exceptional units \(u\) and \(v\) are \emph{equivalent} if the cliques \(\{0,1,u\}\) and \(\{0,1,v\}\) are equivalent.
	We say an exceptional clique is \emph{normalized} if it contains \(0\) and \(1\).
\end{definition}

One check easily that any exceptional unit \(u\) is equivalent to \(u^{-1}\) and \(1-u\). Moreover, an exceptional clique can always be normalized using only translation and multiplication: pick $x\in\clq$ and set $\clq'=\clq-x$, which contains $0$, then pick $x'\in\clq'$ (which must be invertible) and set $\clq''={x'}^{-1}\cdot\clq$, which contains $0$ and $1$.

%\begin{lemma}
%Let \(R=\textup{Mat}_n(k)\) for any commutative ring \(k\) and \(n\in\Z_{\geq0}\).
%Then \(T=\sum_{i,j} E_{ij}\tensor E_{ij} \in R^\textup{en}\) satisfies \(T^2=1\) and acts by transposition on \(R\), where \(E_{ij}\) is the matrix consisting of only zeros but a one at entry \((i,j)\). \qed
%\end{lemma}

\section{Commutative cliques}\label{sec:commutative}

By Corollary~\ref{cor:clique_bound}, an exceptional clique in a number ring of rank \(n\) over \(\Z\) is bounded in size by \(2^n\). For \(n=1\) this bound is achieved by \(\{0,1\}\), while for \(n=2\) we have the following cliques.

\begin{example}\label{ex:phi}
	Suppose \(f=X^2+aX+b\in\Z[X]\) is the minimal polynomial of some exceptional unit~\(u\).
	Note that \(b\in\{\pm1\}\), being the norm of the unit \(u\).
	Similarly \(f(X+1)=X^2+(a+2)X+(a+b+1)\) is the minimal polynomial of \(u-1\), so \(a+b+1\in\{\pm1\}\).
	Hence there are exactly \(4\) different possible \(f\):
	\[ f_\zeta = X^2-X+1, \quad f_{\phi-1}=X^2+X-1,  \quad  f_\phi=X^2-X-1, \quad\text{and}\quad f_{\phi+1}=X^2-3X+1. \]
	The first defines the order \(\Z[\zeta]\), where \(\zeta\) is a primitive \(6\)-th root of unity, while the others all have their roots in \(\Z[\phi]=\Z[X]/f_\phi\).
	Note that \(\zeta-\zeta^{-1}\) is not a unit, so \(\{0,1,\zeta\}\) and \(\{0,1,\zeta^{-1}\}\) are the largest exceptional cliques in \(\Z[\zeta]\).
	They are equivalent, as \(x\mapsto \zeta^{-1}x\) maps the former to the latter.
	The exceptional units in \(\Z[\phi]\) are all equivalent, as the following diagram shows:
	\begin{center}
		\phantom{AAAAAAAAAAAAAAA}
		\begin{tikzcd}[column sep={1cm,between origins}, row sep={1.732050808cm/2,between origins}]
			& \phi+1 \arrow[drdr,no head,squiggly,shift left=.03cm] \arrow[drdr,no head,shift right=.03cm] \arrow[rr,no head,dashed]  && 0-\phi \arrow[dr,no head,squiggly] \arrow[dldl,no head]  &  \\
			\phi+0 \arrow[dr,no head,squiggly] \arrow[rrrr,no head,dashed,shift left=.03cm] \arrow[rrrr,no head,shift right=.03cm] &  &&  & 1-\phi \\
			& \phi-1 \arrow[rr,no head,dashed] && 2-\phi  & 
		\end{tikzcd} \quad\quad
		\begin{tikzcd}[column sep={.9cm,between origins}, row sep={.4cm,between origins}]
			x \arrow[r,no head,squiggly]& y && \textup{\makebox[\widthof{something long}][l]{inverse}} \\
			x \arrow[r,no head]& y && \textup{\makebox[\widthof{something long}][l]{conjugate}} \\
			x \arrow[r,no head,dashed]& y && \textup{\makebox[\widthof{something long}][l]{\(x+y=1\)}}
		\end{tikzcd}
	\end{center}
	Note that \((\phi+1)-\phi\) is a unit, so \(\{0,1,\phi,\phi+1\}\) is an exceptional clique of size \(4\).
	In particular, it reaches the upper bound of Corollary~\ref{cor:clique_bound}. 
	We leave it as an exercise to the reader to show that there are \(6\) exceptional cliques of size \(4\) containing \(0\) and \(1\), and that under the action of the three operations in the diagram all these cliques are equivalent.
\end{example}

The order \(\Z[\phi]\) is involved in many large known commutative cliques in small dimensions.

\begin{example}\label{ex:nf_deg_4}
	Let \(\phi\) be a root of \(X^2-X-1\) and let \(\alpha\) be a root of \(X^2-\phi X-1\). Then 
	%\[\{0,1,\phi\}\cup\big(\alpha\phi-\{0,1,\phi+1\}\big) \cup \big(\alpha(\phi-1)-\{0,2-\phi\} \big) \cup \big(\alpha-\{1,\phi+1\}\big)\subseteq \Z[\alpha]\]
	\[\{0,1,-\phi,\alpha-1,\alpha-\phi,\alpha-\phi+1,\phi(\alpha-1),\phi(\alpha-1)-1,(\phi-1)(\alpha-1),(\phi-1)\alpha-1\} \subseteq \Z[\alpha]\]
	is an exceptional clique of size \(10\) in an order of rank \(4\). Note that the corresponding field is not Galois over \(\Q\), let alone abelian.
\end{example}

The Galois action does not interact too nicely with exceptional cliques, as expressed by the following proposition.

\begin{proposition}\label{prop:no_galois_clique}
	If \(K\) is a finite Galois extension of \(\Q\) and the Galois orbit of \(\alpha\in\mathcal{O}_K\) is an exceptional clique, then \(\alpha\in\Z\).
\end{proposition}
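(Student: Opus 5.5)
Let \(G=\Gal(K/\Q)\), and write the orbit of \(\alpha\) as \(\{\alpha_1,\dots,\alpha_r\}\) with \(\alpha_1=\alpha\). Let \(f=\prod_i (X-\alpha_i)\in\Z[X]\) be the minimal polynomial of \(\alpha\). The plan is to show that \(r\geq 2\) is impossible, so that \(r=1\), i.e.\ \(\alpha\in\Q\cap\mathcal{O}_K=\Z\). Assume therefore that \(r\geq 2\), so that all differences \(\alpha_i-\alpha_j\) with \(i\neq j\) are units.

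The cleanest route is via the discriminant, or rather the derivative. Compute
\[ f'(\alpha)=\prod_{j\neq 1}(\alpha-\alpha_j), \]
which is a product of units and hence a unit of \(\mathcal{O}_K\). The same holds for each conjugate \(f'(\alpha_i)\). Consequently
\[ \operatorname{disc}(f)=\pm\prod_i f'(\alpha_i) \]
is a rational integer that is also a unit in \(\mathcal{O}_K\), so \(\operatorname{disc}(f)=\pm1\). On the other hand, \(\Z[\alpha]\) is an order in \(\Q(\alpha)\), so \(\operatorname{disc}(\Q(\alpha))\) divides \(\operatorname{disc}(f)\). We therefore get \(|\operatorname{disc}(\Q(\alpha))|=1\). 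By Minkowski's theorem, every number field of degree \(r\geq2\) has \(|\operatorname{disc}|>1\), which contradicts \(r\geq 2\). Note that the Galois hypothesis is used only to ensure that all conjugates of \(\alpha\) lie in \(K\), so that "unit in \(\mathcal{O}_K\)" makes sense for all the differences.

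An alternative, more elementary route avoids Minkowski by reducing modulo a prime. For any prime \(p\) and any prime \(\mathfrak p\mid p\) of \(\mathcal{O}_K\), the images of the \(\alpha_i\) in \(\mathcal{O}_K/\mathfrak p\) are pairwise distinct, since their differences are units. This shows that \(f\) is separable modulo \(p\) for every \(p\), i.e.\ \(p\nmid\operatorname{disc}(f)\), and so again \(\operatorname{disc}(f)=\pm1\). We still need the fact that no nontrivial order has unit discriminant, so Minkowski, or the equivalent statement that \(\Q\) has no unramified extensions, is unavoidable in this form.

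The main step, and the only non-routine one, is invoking Minkowski's discriminant bound. If it is preferable to avoid it, I would try to bound \(|\operatorname{disc}(f)|=\prod_{i<j}|\alpha_i-\alpha_j|^2\) from below by an AM–GM argument applied to the real and complex embeddings. I expect that approach to be messier, so I would simply cite Minkowski.
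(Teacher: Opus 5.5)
Your proof is correct and follows the paper's own argument: \(\prod_{i<j}(\alpha_i-\alpha_j)^2\), the discriminant of \(\Z[\alpha]\), is a rational integer that is a unit, hence \(\pm1\), and Minkowski's theorem then forces \(\Q(\alpha)=\Q\). You also make explicit the step the paper leaves implicit: the field discriminant of \(\Q(\alpha)\) divides that of \(\Z[\alpha]\), and this is what lets Minkowski's bound, a statement about field discriminants, apply.
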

\begin{proof}
	The discriminant of \(\Z[\alpha]\) is given by \(\prod_{i<j} (\alpha_i-\alpha_j)^2\), where \(\alpha_1,\dotsc,\alpha_n\) are the conjugates of \(\alpha\). By assumption it is a unit, so \(\Q(\alpha)=\Q\) by Minkowski's theorem.
\end{proof}

%\begin{example}\label{ex:nf_deg_6}
%Let \(\phi\) be a root of \(X^2-X-1\) and write \(\phi_1 = \phi+1\) and \(\phi_2=2\phi+1\).
%Let \(\alpha\) be a root of \(X^3-\phi X^2-2\phi_1X+\phi_2\). 
%Then 
%\begin{align*}
%&\{0, 1, -\phi^{-1} b^{2} - b - \phi, -\phi^{-1} b^{2} -\phi_1^{-1} b - \phi_1, -\phi^{-1} b^{2} - b - 2 \phi, -\phi_1^{-1} b^{2} -\phi^{-1} b + \phi + 2, \\ 
%&-2\phi^{-1} b^{2} - b - 4 \phi, -\phi^{-1} b^{2} - b +\phi^{-1}, -\phi^{-1} b^{2} - \phi, \phi_2^{-1} b^{2} + \phi_2^{-1} b - \phi, \phi^{-1} b +\phi^{-1}, \\ 
%& 2\phi_2^{-1} b^{2} -\phi_1^{-1} b - 4 \phi + 5, \phi_1^{-1} b^{2} -\phi_1^{-1} b - \phi, -\phi^{-1} b^{2} - 2 \phi + 1, \phi_2^{-1} b^{2} -\phi_1^{-1} b +\phi^{-1}, \\ 
%& \phi_2^{-1} b^{2} + -\phi_1^{-1} b + 2\phi^{-1}, \phi_2^{-1} b^{2} +\phi^{-1}, \left(-6 \phi + 10\right) b^{2} - \phi_2^{-1} b + 5 \phi - 9\} \subseteq \Z[\alpha]
%\end{align*}
%is an exceptional clique of size \(18\) in an order of rank \(6\).
%\end{example}

The following theorem by Leutbecher and Niklasch exhibits a family of orders, which includes the order \(\Z[\phi]\), with exceptional cliques that achieve the highest rate known in the number field case, namely \(2\).

\begin{theorem}[Theorem~3 in \cite{Leutbecher}]\label{thm:nt_solution}
	Let \(p\) be an odd prime. Then \(\Z[\zeta_p+\zeta_p^{-1}]\) has rank \((p-1)/2\) and contains an exceptional clique of size \(p-1\). \qed
\end{theorem}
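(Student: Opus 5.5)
We need a clique of size \(p-1\) in \(\Z[\eta]\), where \(\eta=\zeta_p+\zeta_p^{-1}\) and \(\zeta=\zeta_p\). The rank claim is standard: \(\Q(\eta)\) is the maximal real subfield of \(\Q(\zeta_p)\), of degree \((p-1)/2\), and \(\Z[\eta]\) is its ring of integers.

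For the clique, I would take the elements
\[ x_k=\zeta^k+\zeta^{-k}\in\Z[\eta],\qquad k=0,1,\dots,p-1 .\]
Each \(x_k\) is a polynomial in \(\eta\) by Chebyshev recursion. These coincide in pairs (\(x_k=x_{p-k}\)), so directly this gives only about \((p+1)/2\) distinct elements; that is not enough by itself. For distinct \(j,k\in\{0,\dots,(p-1)/2\}\) we compute
\[ x_j-x_k=\zeta^{-j}(\zeta^{j+k}-1)(\zeta^{j-k}-1). \]
Each factor \(\zeta^a-1\) with \(p\nmid a\) is an associate of \(1-\zeta\), which generates the prime above \(p\). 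So these differences have norm a power of \(p\) and are \emph{not} units. This naive choice fails, and I will instead use quotients.

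The standard units to use are the cyclotomic units \(\frac{\zeta^a-\zeta^{-a}}{\zeta-\zeta^{-1}}\). These are real, lie in \(\Z[\eta]\), and are units for \(p\nmid a\). So I would consider
\[ y_a=\frac{\zeta^a-\zeta^{-a}}{\zeta-\zeta^{-1}}=\frac{\sin(2\pi a/p)}{\sin(2\pi/p)} . \]
These are Chebyshev polynomials of the second kind evaluated at \(\eta\). Then
\[ y_a-y_b=\frac{(\zeta^{a}-\zeta^{-a})-(\zeta^{b}-\zeta^{-b})}{\zeta-\zeta^{-1}}, \]
and the numerator factors as \((\zeta^{(a-b)/2}-\zeta^{-(a-b)/2})(\zeta^{(a+b)/2}+\zeta^{-(a+b)/2})\), with halves taken mod \(p\). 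Here \(\zeta^{c}+\zeta^{-c}=(\zeta^{2c}-\zeta^{-2c})/(\zeta^c-\zeta^{-c})\) is a unit when \(p\nmid c\). Hence \(y_a-y_b\) is a unit exactly when \(a\not\equiv b\) and \(a\not\equiv -b\pmod p\).

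The elements \(y_a\), \(a=1,\dots,(p-1)/2\), together with \(y_0=0\), satisfy \(y_{-a}=-y_a\). Taking \(a\) over a set of representatives of \((\Z/p)^*/\{\pm1\}\) plus \(0\) gives \((p+1)/2\) elements, and \(0-y_a=-y_a\) is a unit. To reach \(p-1\) I would take the union of two such families: \(\{y_a\}\) and a shifted or scaled copy \(\{1-y_a\}\) or \(\{y_a+\text{unit}\}\). I would then verify the cross differences by the same product-of-sines factorization, which is presumably the content of Leutbecher–Niklasch's Theorem~3.

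The main obstacle is exactly this cross-family verification. I would handle it by working with the identity
\[ \sin\alpha\pm\sin\beta=2\sin\tfrac{\alpha\pm\beta}{2}\cos\tfrac{\alpha\mp\beta}{2}. \]
Translated to \(\zeta\), this writes each needed difference as a product of terms \(\zeta^c-\zeta^{-c}\) and \(\zeta^c+\zeta^{-c}\) with \(p\nmid c\) (halving \(c\) mod \(p\) is legitimate since \(p\) is odd), divided by \(\zeta-\zeta^{-1}\). Such a product is a unit, because the numerator and the denominator each have exactly one factor associate to \(1-\zeta\). I would choose the second family so that every cross difference is a difference of two terms of the form \(\zeta^{u}-\zeta^{-u}\) over the common denominator. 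Checking that this gives \(p-1\) elements with no congruence collisions is the step I expect to require the most care.
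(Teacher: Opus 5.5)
There is a genuine gap. Your argument only produces a clique of size $(p+1)/2$, namely $\{y_a : 0\le a\le (p-1)/2\}$. The step to $p-1$ is the whole content of the theorem, and you leave it unspecified and defer it to the very result being proved.

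The candidate you name does not work in general. For $p=7$ the union $\{y_a\}\cup\{1-y_a\}$ does have the right size $p-1$, because $y_0=1-y_1$ and $y_1=1-y_0$. But it contains $y_2=\eta$ and $1-y_2=1-\eta$. Their difference $2\eta-1$ has norm $-8f(1/2)=13$, where $f=X^3+X^2-2X-1$ is the minimal polynomial of $\eta$, so it is not a unit. (The construction happens to work for $p=5$.)

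More fundamentally, your verification strategy keeps the fixed denominator $\zeta-\zeta^{-1}$ with single-term numerators $\zeta^u-\zeta^{-u}$. Elements of that shape are just the $y_u$. By your own criterion ($y_a-y_b$ is a unit iff $a\not\equiv\pm b$), such a family cannot exceed $(p+1)/2$ elements.

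The missing idea is to let the denominators vary. Write $s_k=\zeta^k-\zeta^{-k}$ and take $a_i=s_{i-1}/s_i=y_{i-1}/y_i$ for $1\le i\le p-1$. These lie in $\Z[\eta]$ because $y_i$ is a unit there. A direct expansion gives
\[ a_i-a_j=\frac{s_{i-1}s_j-s_{j-1}s_i}{s_is_j}=\frac{s_1\,s_{i-j}}{s_i\,s_j}, \]
which is the identity $\sin(\alpha-\gamma)\sin\beta-\sin(\beta-\gamma)\sin\alpha=\sin\gamma\sin(\alpha-\beta)$. For $i\neq j$ this quotient has two factors associate to $1-\zeta$ in the numerator and two in the denominator, so it is a unit, and the $a_i$ form a clique of size $p-1$.

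Up to sign, this is exactly the clique the paper obtains from the spread $\{R\zeta^i : 0\le i<p\}$ of $R=\Z[\eta]$ inside $\Z[\zeta]=R\zeta\oplus R$, via Theorem~\ref{thm:clique_spread}. Writing $\zeta^i=c\,\zeta+d$ with $c,d\in R$ and applying complex conjugation gives $c=s_i/s_1$ and $d=-s_{i-1}/s_1$, so the associated clique element is $dc^{-1}=-a_i$. Your rank argument and your unit computations for the $y_a$ are fine.
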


Theorem~\ref{thm:nt_solution} gives a commutative exceptional clique in \(\textup{Mat}_n(\Z)\) for a set of \(n\) of natural density~\(0\). At the cost of a worse rate of~\(\frac{2}{3}\), we move to density \(1\) using a result from Matom\"aki, Maynard and Shao.

\begin{theorem}\label{thm:goldbach}
	Fix \(\theta > \frac{11}{20}\). For all \(n\geq 0\) there exists a commutative exceptional clique in \(\textup{Mat}_n(\Z)\) of size \(\frac{2}{3}n + O(n^{\theta})\).
\end{theorem}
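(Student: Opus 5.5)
Write $n$ as a sum of three parts of the form $(p_i-1)/2$ with $p_i$ odd primes of nearly equal size, take the Leutbecher--Niklasch clique of Theorem~\ref{thm:nt_solution} in each $\Z[\zeta_{p_i}+\zeta_{p_i}^{-1}]$, and combine them via Lemma~\ref{lem:trivial_induction}.

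Concretely, $\Z[\zeta_p+\zeta_p^{-1}]$ has rank $(p-1)/2$ and a clique of size $p-1$. The regular representation embeds it in $\textup{Mat}_{(p-1)/2}(\Z)$ as a commutative subring, so $\omega_{\textup{com}}(\textup{Mat}_{(p-1)/2}(\Z))\geq p-1$. The $\omega_{\textup{com}}$ version of Lemma~\ref{lem:trivial_induction}(ii), applied twice, then gives
\[\omega_{\textup{com}}(\textup{Mat}_{m_1+m_2+m_3}(\Z))\geq \min_i\, (p_i-1), \qquad m_i=(p_i-1)/2.\]

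It remains to choose the primes. We need odd primes $p_1,p_2,p_3$ with
\[(p_1-1)+(p_2-1)+(p_3-1)=2n,\quad\text{that is,}\quad p_1+p_2+p_3=2n+3,\]
and $p_i=\frac{2n+3}{3}+O(n^{\theta})$ for each $i$. Then $\min_i (p_i-1)=\frac{2}{3}n+O(n^{\theta})$, which is the claimed bound.

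The needed input is a ternary Goldbach theorem with almost equal primes. Matom\"aki, Maynard and Shao~\cite{Vinogradov} prove that every sufficiently large odd $N$ is a sum of three primes $p_i$ with $|p_i-N/3|\leq N^{\theta}$ for any fixed $\theta>\frac{11}{20}$. Apply this with $N=2n+3$, which is odd. For $n$ large the primes are all odd, since each exceeds $2$. The finitely many small $n$ are absorbed into the $O(n^{\theta})$ term. For $n=0$ the empty clique or $\{0\}$ in the zero ring is fine, and the statement is trivial there anyway, since for small $n$ any clique such as $\{0,1\}$ suffices.

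The main obstacle is locating and correctly stating the almost-equal-primes Goldbach result, including its exponent and its requirement that $N$ be odd and large. Everything else is bookkeeping: commutativity is preserved because the diagonal block construction pairs clique elements from commutative subrings, giving a commutative subring of the block-diagonal matrices.
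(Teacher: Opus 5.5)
Your proposal is correct and is essentially the paper's own proof. You write $2n+3=p_1+p_2+p_3$ with almost-equal odd primes via Theorem~1.1 of \cite{Vinogradov}, take the clique of size $p_i-1$ from Theorem~\ref{thm:nt_solution} in $\textup{Mat}_{(p_i-1)/2}(\Z)$, combine the three with the commutative version of Lemma~\ref{lem:trivial_induction} to get size $\min_i p_i-1=\frac{2}{3}n+O(n^{\theta})$, and handle small $n$ trivially (the paper uses the empty clique there).
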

\begin{proof}
	We may assume \(n\) is `sufficiently large', since for small \(n\) we may simply take the empty clique. 
	Then the number \(2n+3\) can be written as a sum of three odd primes \(p_1\), \(p_2\) and \(p_3\) of size \(\frac{2}{3}n+O(n^\theta)\) by Theorem 1.1 in \cite{Vinogradov}.
	With \(n_i=(p_i-1)/2\), the ring \(\textup{Mat}_{n_i}(\Z)\) contains a commutative exceptional clique of size \(p_i-1\) by Theorem~\ref{thm:nt_solution}.
	As \(n=n_1+n_2+n_3\), the ring \(\textup{Mat}_{n}(\Z)\) contains by Lemma~\ref{lem:trivial_induction} a commutative exceptional clique of size \(\min\{p_1,p_2,p_3\}-1=\frac{2}{3}n+O(n^\theta)\).
\end{proof}

The cliques constructed in this theorem are commutative because they are contained in a product of orders.
We will show conversely that every commutative clique arises from an order. 

\begin{definition}
	For a ring \(\nrg\) let \(\textup{Jac}(\nrg)\) denote the \emph{Jacobson radical} of \(\nrg\), which is the subset of elements that annihilate all simple left \(\nrg\)-modules.
	An algebra \(A\) over a field \(K\) is \emph{semisimple} if \(\textup{Jac}(A)\) is trivial, and \emph{separable} if it is finite dimensional and for every field extension \(K\to L\) the \(L\)-algebra \(L\tensor_K A\) is semisimple.
\end{definition}

\begin{theorem}[Artin--Wedderburn, p.\ 69 in \cite{wedderburn_book}]\label{thm:artin-wedderburn}
	Let \(K\) be a field and \(A\) a semisimple \(K\)-algebra. Then there exist integers \(0\leq n\) and \(0<m_1,\dotsc,m_n\) and division \(K\)-algebras \(D_1,\dotsc,D_n\) such that  \(A\cong\textup{Mat}_{m_1}(D_1)\times\dotsm\times\textup{Mat}_{m_n}(D_{n})\), and this representation is unique up to reordering of the indices. \qed
\end{theorem}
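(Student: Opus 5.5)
Throughout I assume \(A\) is finite dimensional over \(K\); this is the setting in which the theorem is used here, and without it the statement fails (for instance \(K[X]\) has trivial Jacobson radical). The plan is the classical one: show that \(A\) is semisimple as a left module over itself, decompose it into isotypic components, and compute \(\End_A(A)\) using Schur's lemma.

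\emph{Step 1: \(A\) is a finite direct sum of simple left modules.} \(\textup{Jac}(A)\) is the intersection of the maximal left ideals of \(A\), since the simple modules are exactly the quotients \(A/\m\).
\begin{itemize}
\item Choose maximal left ideals \(\m_1,\m_2,\dotsc\) one at a time. At each stage, if \(\m_1\cap\dotsb\cap\m_r\neq 0\), pick a maximal left ideal not containing this intersection; such an ideal exists because \(\textup{Jac}(A)=0\).
\item Each choice strictly lowers the \(K\)-dimension of the intersection, so after finitely many steps \(\m_1\cap\dotsb\cap\m_r=0\).
\item Hence the diagonal map \(A\to\bigoplus_i A/\m_i\) is an injective module map into a semisimple module.
\item Submodules of semisimple modules are semisimple, since they are direct summands.
\end{itemize}
Therefore \({}_AA\cong S_1^{m_1}\oplus\dotsb\oplus S_n^{m_n}\), with the \(S_i\) pairwise non-isomorphic simple modules and \(m_i>0\).

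\emph{Step 2: the structure isomorphism.} We have \(A^{\textup{op}}\cong\End_A(A)\) via right multiplications. By Schur's lemma, \(\Hom_A(S_i,S_j)=0\) for \(i\neq j\), and \(E_i:=\End_A(S_i)\) is a division ring. It is a \(K\)-algebra of finite dimension because \(S_i\) is finite dimensional. Hence
\[\End_A(A)\cong\prod_i\End_A(S_i^{m_i})\cong\prod_i\textup{Mat}_{m_i}(E_i).\]
Taking opposite rings and using transposition, \(\textup{Mat}_m(E)^{\textup{op}}\cong\textup{Mat}_m(E^{\textup{op}})\). This gives \(A\cong\prod_i\textup{Mat}_{m_i}(D_i)\) with \(D_i=E_i^{\textup{op}}\), which are division \(K\)-algebras.

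\emph{Step 3: uniqueness.} Suppose \(A\cong\prod_{i=1}^n\textup{Mat}_{m_i}(D_i)\).
\begin{itemize}
\item The factors are exactly the minimal two-sided ideals of \(A\), generated by the primitive central idempotents. So \(n\) and the factor rings are determined up to reordering.
\item For a single factor \(B=\textup{Mat}_m(D)\), every simple \(B\)-module is isomorphic to the column space \(D^m\).
\item \({}_BB\cong(D^m)^m\), so \(m\) is recovered as the length of \({}_BB\).
\item \(\End_B(D^m)\cong D^{\textup{op}}\) (right scalar multiplication), so \(D\) is recovered up to isomorphism as \(\End_B(D^m)^{\textup{op}}\).
\end{itemize}

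I expect Step 1 to be the main obstacle, namely passing from the definition of semisimple via the Jacobson radical to semisimplicity of the regular module. It is the only place where finite dimensionality is essential. I would handle it with the dimension-descent argument above, which avoids any appeal to general Artinian ring theory.
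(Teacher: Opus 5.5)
Your proof is correct. It is the standard textbook argument (the regular module is semisimple, Schur's lemma, and \(\End_A(A)\cong A^{\textup{op}}\)); the paper gives no proof of its own and simply defers to this argument by citation. Your finite-dimensionality caveat is also justified: under the paper's definition of semisimple as \(\textup{Jac}(A)=0\), the statement fails for \(K[X]\), but every application in the paper (Lemma~\ref{lem:dimension_bound} and Proposition~\ref{prop:clique_implies_simple}) concerns finite-dimensional algebras.
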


\begin{theorem}[Wedderburn Principal Theorem, p.\ 143 in \cite{wedderburn_book}]\label{thm:wedderburn}
	Suppose \(K\) is a field and \(A\) a finite-dimensional (not necessarily commutative) \(K\)-algebra with Jacobson radical \(J\). Then \(A/J\) is a separable \(K\)-algebra if and only if there exists a separable \(K\)-subalgebra \(B\subseteq A\) such that \(A=B \oplus J\). \qed
\end{theorem}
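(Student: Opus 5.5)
The easy direction comes first. If \(A=B\oplus J\) with \(B\) a separable \(K\)-subalgebra, then the quotient map restricts to an isomorphism of \(K\)-algebras \(B\to A/J\), so \(A/J\) is separable. For the converse I would assume \(A/J=:S\) is separable and build a subalgebra \(B\) complementing \(J\) by induction on \(\dim_K J\). The radical \(J\) is nilpotent because \(A\) is finite dimensional. If \(J=0\) we take \(B=A\).

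\emph{Reduction to \(J^2=0\).} Suppose \(J^2\neq 0\). Then \(J/J^2\) is the radical of \(A/J^2\), its quotient is \(S\), and \(\dim J/J^2<\dim J\) since \(J^2\neq J\) by nilpotency. Induction gives a separable subalgebra \(B'\subseteq A/J^2\) with \(A/J^2=B'\oplus J/J^2\). Let \(A'\) be the preimage of \(B'\) in \(A\). Then \(J^2\) is a nilpotent ideal of \(A'\) and \(A'/J^2\cong B'\) is semisimple, so \(\textup{Jac}(A')=J^2\). Since \(\dim J^2<\dim J\), induction applied to \(A'\) gives a separable \(B\subseteq A'\) with \(A'=B\oplus J^2\). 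It follows that \(A=B\oplus J\).

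\emph{Case \(J^2=0\).} Here \(J\) becomes an \(S\)-bimodule. Choose a \(K\)-linear section \(s:S\to A\) with \(s(1)=1\), and set \(f(x,y)=s(xy)-s(x)s(y)\in J\). Associativity of \(A\) shows that \(f\) is a Hochschild \(2\)-cocycle with values in \(J\). It suffices to find a linear \(g:S\to J\) with
\[f(x,y)=x\,g(y)-g(xy)+g(x)\,y,\]
because then \(s+g\) is an algebra homomorphism and its image \(B\) complements \(J\). For this I would use a separability idempotent \(e=\sum_i a_i\otimes b_i\in S\otimes_K S^{\textup{op}}\), satisfying \(\sum_i a_ib_i=1\) and \(xe=ex\) for all \(x\in S\). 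The explicit choice
\[g(x)=\sum_i f(x,a_i)\,b_i,\]
or a variant of it, should then give \(\delta g=f\) after a routine computation using the cocycle identity.

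\emph{Main obstacle.} The step I expect to be hardest is producing \(e\) from the definition of separability used here, namely that \(L\otimes_K S\) is semisimple for every field extension \(L\). I would show that \(S^{e}=S\otimes_K S^{\textup{op}}\) is semisimple. Over an algebraic closure \(\bar K\), both \(\bar K\otimes S\) and its opposite are products of matrix algebras over \(\bar K\), because a finite-dimensional division algebra over \(\bar K\) is \(\bar K\) itself. Their tensor product is therefore again a product of matrix algebras, hence semisimple. Semisimplicity then descends to \(S^e\): a nilpotent ideal of \(S^e\) extends to a nilpotent ideal of \(\bar K\otimes S^e\). Consequently the surjective \(S^e\)-module map \(S^e\to S\), \(a\otimes b\mapsto ab\), splits, and the image of \(1\) under a splitting is the required \(e\).
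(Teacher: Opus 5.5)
The paper does not prove this statement; it only cites the textbook. Your argument is a correct and complete version of the standard proof: induction on $\dim_K J$ reduces to $J^2=0$, and there the Hochschild $2$-cocycle is killed using a separability idempotent obtained from the semisimplicity of $S\otimes_K S^{\textup{op}}$. Your formula $g(x)=\sum_i f(x,a_i)b_i$ needs no variant: the cocycle identity at $(x,y,a_i)$ gives $x g(y)-g(xy)=f(x,y)-\sum_i f(x,ya_i)b_i$, and applying $u\otimes v\mapsto f(x,u)v$ to $\sum_i ya_i\otimes b_i=\sum_i a_i\otimes b_i y$ turns that last sum into $g(x)y$.

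There is one harmless slip in the attributions. The inequality $\dim_K J/J^2<\dim_K J$ comes from your case hypothesis $J^2\neq 0$. Nilpotency ($J^2\neq J$) is what gives $\dim_K J^2<\dim_K J$, which you need for the second use of the induction hypothesis.
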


\begin{lemma}\label{lem:dimension_bound}
	Let \(K\) be a field and \(n\in\Z_{\geq0}\). 
	If \(A\subseteq\textup{Mat}_n(K)\) is a separable commutative \(K\)-subalgebra, then \(\dim_K(A)\leq n\).
\end{lemma}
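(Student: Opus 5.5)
The plan is to reduce to the case of an algebraically closed field, where a separable commutative algebra becomes a product of copies of the field, and then count orthogonal idempotents.

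\emph{Extending scalars.} Let \(\bar K\) be an algebraic closure of \(K\). The inclusion \(A\subseteq\textup{Mat}_n(K)\) induces a map \(\bar K\tensor_K A\to\bar K\tensor_K\textup{Mat}_n(K)\cong\textup{Mat}_n(\bar K)\). This map is injective because \(\bar K\) is flat over \(K\) (every module over a field is free). So \(\bar A:=\bar K\tensor_K A\) is a commutative \(\bar K\)-subalgebra of \(\textup{Mat}_n(\bar K)\) with \(\dim_{\bar K}\bar A=\dim_K A\). By separability, \(\bar A\) is semisimple.

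\emph{Structure of \(\bar A\).} By Theorem~\ref{thm:artin-wedderburn}, \(\bar A\cong\prod_i\textup{Mat}_{m_i}(D_i)\) with each \(D_i\) a division \(\bar K\)-algebra. Commutativity forces every \(m_i=1\) and every \(D_i\) to be a field. Each \(D_i\) is a finite extension of the algebraically closed field \(\bar K\), so \(D_i=\bar K\). Hence \(\bar A\cong\bar K^d\) with \(d=\dim_K A\).

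\emph{Counting idempotents.} Let \(e_1,\dots,e_d\in\textup{Mat}_n(\bar K)\) be the images of the standard idempotents. They are nonzero, pairwise orthogonal idempotents summing to \(1\). It follows that \(\bar K^n=\bigoplus_i e_i\bar K^n\) with each summand nonzero. (Equivalently, \(\sum_i\rk e_i=\textup{tr}(\sum_i e_i)\) in characteristic \(0\), but the direct-sum argument works in any characteristic.) Therefore \(d\le n\).

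The only delicate point is the injectivity of the scalar extension, which is immediate from flatness over a field. The remaining steps are routine.
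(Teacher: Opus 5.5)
Your proof is correct and follows the paper's argument essentially step for step. Both extend scalars to an algebraic closure, apply Artin--Wedderburn to get \(\bar A\cong\bar K^d\), and use the orthogonal idempotents to split \(\bar K^n\) into \(d\) nonzero summands; the paper phrases the nonvanishing as faithfulness of the action. Your flatness argument for the injectivity of \(\bar K\tensor_K A\to\textup{Mat}_n(\bar K)\) supplies a detail that the paper leaves implicit in its ``without loss of generality'' reduction.
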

\begin{proof}
	By definition \(L\tensor_K A\) is separable over \(L\), where \(L\) is an algebraic closure of \(K\).
	As \(\dim_{L}(L\tensor_KA)=\dim_K(A)\), we may assume without loss of generality that \(K\) is algebraically closed. 
	Then by Theorem~\ref{thm:artin-wedderburn} we obtain that \(A\cong K^m\) as \(K\)-algebra for some \(m\geq 0\).
	This decomposition induces a decomposition of the \(A\)-module \(K^n\) into \(m\) \(K\)-submodules. Since \(A\) acts faithfully, each of these submodules is of dimension at least \(1\). Hence \(\dim_K(A)=m\leq n\).
\end{proof}

Note that the separability condition in the lemma is required. For \(n>1\) the subset \(\{ x+ \big(\begin{smallmatrix} 0 & M \\ 0 & 0 \end{smallmatrix}\big)\,|\, x\in K,\, M\in\textup{Mat}_n(K)\}\subseteq\textup{Mat}_{2n}(K)\) is in fact a commutative \(K\)-subalgebra of dimension \(1+n^2>2n\).

\begin{proposition}\label{prop:commutative_clique_to_nf}
	Let \(R\) be a domain with perfect field of fractions \(K\).
	If there exists an exceptional clique in \(\textup{Mat}_n(R)\) of cardinality \(m\) whose elements pairwise commute, then there exists a field extension \(L\) of \(K\) of degree at most \(n\) such that \(L\) contains an exceptional clique of cardinality \(m\) whose elements are integral over \(R\).
\end{proposition}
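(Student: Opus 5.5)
Let \(\clq\subseteq\textup{Mat}_n(R)\) be a commutative exceptional clique of size \(m\). We may assume \(m\geq 2\) and, after translating, that \(0\in\clq\). The plan is to pass to the commutative \(K\)-algebra generated by the clique, split off its radical with Theorem~\ref{thm:wedderburn}, control the dimension with Lemma~\ref{lem:dimension_bound}, and finally project onto a field factor.

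First let \(A\subseteq\textup{Mat}_n(K)\) be the \(K\)-subalgebra generated by \(\clq\). It is commutative and finite dimensional. Since \(K\) is perfect, every finite-dimensional commutative semisimple \(K\)-algebra is a product of finite, hence separable, field extensions of \(K\), so it is separable. Applying this to \(A/J\), where \(J=\textup{Jac}(A)\), Theorem~\ref{thm:wedderburn} gives a separable subalgebra \(B\subseteq A\) with \(A=B\oplus J\). Lemma~\ref{lem:dimension_bound} then yields \(\dim_K B\leq n\). Because \(B\cong A/J\), and \(A/J\) is a product of fields \(L_1\times\dots\times L_r\), we get \(\sum_i[L_i:K]\leq n\). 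In particular every \(L_i\) has degree at most \(n\) over \(K\).

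Next we transport the clique. Fix \(L=L_1\) (note \(r\geq1\) since \(A\neq 0\)) and let \(\pi\colon\textup{Mat}_n(R)\cap A\to L\) be the composite \(A\to A/J\to L\). For distinct \(u,v\in\clq\), the difference \(u-v\) is a unit of \(\textup{Mat}_n(R)\). Its inverse is a polynomial in \(u-v\) by Cayley--Hamilton, so it lies in \(A\), and it has entries in \(R\). Hence \(u-v\) is a unit of the commutative ring \(S=R[\clq]\subseteq\textup{Mat}_n(R)\cap A\). Applying Proposition~\ref{prop:injective_quotient}(ii) to the ring homomorphism \(\pi|_S\colon S\to L\), the image \(\pi(\clq)\) is an exceptional clique in \(\pi(S)\), and therefore in \(L\) in the appropriate sense. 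Injectivity on \(\clq\) holds because \(\pi(u)-\pi(v)\) is a unit, hence nonzero. So \(\#\pi(\clq)=m\).

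It remains to check integrality. Each \(u\in\clq\) is a matrix over \(R\), so it satisfies its monic characteristic polynomial over \(R\). Its image \(\pi(u)\) satisfies the same polynomial, so it is integral over \(R\).

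The step needing most care is the interpretation of ``exceptional clique in \(L\)''. Inside the field \(L\) every nonzero difference is trivially a unit, so the meaningful statement is that the differences are units of the ring generated over \(R\) by the clique, or of the integral closure of \(R\) in \(L\). The argument above gives exactly this, since \((\pi(u)-\pi(v))^{-1}=\pi((u-v)^{-1})\) is integral over \(R\). The other point to verify is that separability of \(A/J\) follows from perfectness of \(K\). This is standard: finite extensions of a perfect field are separable.
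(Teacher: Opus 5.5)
Your proof is correct and follows essentially the same route as the paper. Like the paper, you form the commutative $K$-algebra $A$ generated by the clique, use perfectness of $K$ and Theorem~\ref{thm:wedderburn} to realize $A/\textup{Jac}(A)$ as a separable subalgebra of $A\subseteq\textup{Mat}_n(K)$, bound its dimension with Lemma~\ref{lem:dimension_bound}, push the clique to a residue field $L$ of $A$ via Proposition~\ref{prop:injective_quotient}, and get integrality from Cayley--Hamilton. Your extra observation that $(\pi(u)-\pi(v))^{-1}=\pi((u-v)^{-1})$ is itself integral over $R$ is a worthwhile sharpening that the paper leaves implicit: it makes the image an exceptional clique in the integral closure of $R$ in $L$, not merely (and trivially) in the field $L$.
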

\begin{proof}
	Let \(A\) be the (commutative) \(K\)-algebra generated by the exceptional clique \(\clq\), let \(\mathfrak{m}\subseteq A\) be a maximal ideal and write \(L=A/\mathfrak{m}\), which is a field.
	%Note that the image of \(\clq\) in \(\ell\) is an exceptional clique, since ring homomorphisms preserve units. Moreover, this mapping is injective: If \(u,v\in \clq\) have the same image in \(\ell\), then \(u-v\in\mathfrak{m}\) is not a unit, hence \(u=v\). 
	By Proposition~\ref{prop:injective_quotient}, the image of \(\clq\) in \(L\) is an exceptional clique of cardinality \(m\).
	Each element of \(\textup{Mat}_n(R)\) is integral over \(R\) by Cayley--Hamilton, so the same holds for the image of \(\clq\) in \(L\).
	We have inclusions \(L \to A/\textup{Jac}(A)\to A \to \textup{Mat}_n(K)\), where the first is the Chinese remainder theorem applied to the (finite) set of maximal ideals of \(A\), and the second is Theorem~\ref{thm:wedderburn} combined with the fact that \(A/\textup{Jac}(A)\) is automatically separable because \(K\) is perfect. 
	Hence \(A/\textup{Jac}(A)\) and consequently \(L\) are of dimension at most \(n\) over \(K\) by Lemma~\ref{lem:dimension_bound}.
\end{proof}

\section{Upper bounds on commutative cliques}\label{sec:uppercommutative}

In this section we will give upper bounds on the clique number for orders, both for small rank and in general. We do this by studying cliques in \'etale algebras.

For \(n=1,2\), the upper bound of \(2^n\) from Corollary~\ref{cor:clique_bound} is achieved by the orders \(\Z\) and \(\Z[\phi]\), see Example~\ref{ex:phi}.
For \(n=3\), Leutbecher and Martinet (Theorem~4.1.1 in \cite{Martinet}) proved for orders that the largest exceptional clique has cardinality \(7=2^n-1\) and that this is achieved only for \(\Z[\zeta_7+\zeta_7^{-1}]\).
For \(n=4\) they also proved for non-totally real number fields that the largest exceptional clique that can occur has size \(9\) (Theorem~5.1.1 in \cite{Martinet}), and can be found only in \(\Z[\phi][X]/(X^2-X-\phi)\). In the totally-real case we have a lower bound of \(10\), see Example~\ref{ex:nf_deg_4}, and we will show that this is maximal as well.
The argument of Leutbecher and Martinet for the upper bound relies on the unit rank of the number field being small, effectively using the primes at infinity. Our argument involves the finite primes.

\begin{definition}\label{def:M}
	Let \(K\) be a field. 
	If \(A\) is a commutative \(K\)-algebra of finite dimension, we write \(M(A/K)\) for the maximal cardinality of a subset \(\clq\subseteq A\) such that \(N_{A/K}(x-y)=\pm 1\) for all distinct \(x,y\in \clq\). If \(n\in\Z_{\geq 0}\) we write \(M(n,K)\) for the maximum of \(M(A/K)\), where \(A\) ranges over all commutative \(K\)-algebras of dimension $n$.
	%We analogously define \(M_\textup{\'e}(n,K)\), where \(A\) is restricted to \'etale algebras.
	%\inote{ (Update mail from Milan: should we change k to a field?)}
\end{definition}

\begin{comment}
For fixed \(n\) and \(K\) there are only finitely many such \(A\), so the computation of \(M(n,K)\) is a finite problem.
The quantity \(M(n,\F_p)\) provides an upper bound on clique sizes.
\end{comment}

We will focus particularly on algebras over finite fields. In this case there are only finitely many isomorphism classes of commutative $K$-algebras of dimension $n$, so the computation of \(M(n,K)\) is a finite problem.

\begin{proposition}\label{prop:LleqM}
	If \(\mathcal{O}\) is an order of rank \(n\) and \(p\) is prime, then \(\omega(\mathcal{O})\leq M(n,\F_p)\).
\end{proposition}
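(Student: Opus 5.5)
The plan is to reduce modulo \(p\) and use that reduction preserves norms. Let \(\clq\subseteq\mathcal{O}\) be an exceptional clique. Since \(\mathcal{O}\) is free of rank \(n\) as a \(\Z\)-module, \(A:=\mathcal{O}/p\mathcal{O}\) is a commutative \(\F_p\)-algebra of dimension \(n\). We consider the image \(\bar\clq\) of \(\clq\) under the ring homomorphism \(\mathcal{O}\to A\).

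First, the map \(\clq\to A\) is injective. By Proposition~\ref{prop:injective_quotient}(i), applied to the nonzero \(\mathcal{O}\)-module homomorphism \(\mathcal{O}\to A\), the restriction to \(\clq\) is injective. This map is nonzero because \(A\neq 0\) when \(n\geq 1\); the case \(n=0\) does not arise for a domain. Hence \(\#\bar\clq=\#\clq\).

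Next, for distinct \(x,y\in\clq\) the difference \(u=x-y\) is a unit of \(\mathcal{O}\). We use the norm \(N_{\mathcal{O}/\Z}(u)\), defined as the determinant of multiplication by \(u\) on the free \(\Z\)-module \(\mathcal{O}\). It is multiplicative, so \(N(u)N(u^{-1})=N(1)=1\) forces \(N(u)=\pm1\) in \(\Z\). The key compatibility is that the matrix of multiplication by \(\bar u\) on \(A=\mathcal{O}\tensor_\Z\F_p\), taken in the reduced basis, is the reduction mod \(p\) of the matrix of multiplication by \(u\). Therefore \(N_{A/\F_p}(\bar x-\bar y)=N_{\mathcal{O}/\Z}(u)\bmod p=\pm1\).

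Thus \(\bar\clq\) is a subset of \(A\) of size \(\#\clq\) whose pairwise differences have norm \(\pm1\). This gives \(\#\clq\leq M(A/\F_p)\leq M(n,\F_p)\), and taking the supremum over \(\clq\) yields the claim; in particular \(\omega(\mathcal{O})\) is finite. The only step requiring any care is this compatibility of the norm with base change. It is immediate from the definition via determinants once a \(\Z\)-basis of \(\mathcal{O}\) is fixed, so no real obstacle is expected.
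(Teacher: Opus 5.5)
Your proposal is correct and follows the paper's proof: reduce to $A=\mathcal{O}/p\mathcal{O}$ and use $N_{A/\F_p}(\bar x-\bar y)\equiv N_{\mathcal{O}/\Z}(x-y)=\pm1 \pmod p$. You also state explicitly that $\clq\to A$ is injective, which the paper leaves implicit; it also follows directly from the norm condition, since $N_{A/\F_p}(0)=0\neq\pm1$.
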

\begin{proof}
	Every such \(\mathcal{O}\) gives rise to an \(\F_p\)-algebra \(A=\mathcal{O}/p\mathcal{O}\) of rank \(n\), and each exceptional clique \(\clq\subseteq \mathcal{O}\) satisfies \(N_{A/\F_p}(x-y)\equiv N_{\mathcal{O}/\Z}(x-y) = \pm 1\ (\textup{mod }p)\) for all distinct \(x,y\in \clq\).
\end{proof}

Computation of a "clique" in \(\F_{p^n}^*\) could be made more efficient by the following observation: the graph on \(\F_{p^n}^*\) with edges \(\{\{x,y\} \mid N(x-y)=\pm1\}\) is \emph{circulant} because the cyclic group \(\F_{p^n}^*\) acts on the vertices by multiplication. However, without using more structure, finding a maximum clique is still hard (see \cite{CODENOTTI1998123}).
Instead, we simplify computing an upper bound with the following two results.

\begin{proposition}
	For all fields \(K\) and \(n\in\Z_{\geq 0}\) there exists an \'etale commutative \(K\)-algebra \(B\) of rank \(n\) such that \(M(n,K)=M(B/K)\).
\end{proposition}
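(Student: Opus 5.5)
The plan is to start from an arbitrary commutative $K$-algebra $A$ of dimension $n$ together with a set $\clq\subseteq A$ witnessing $M(A/K)$, and to replace $A$ by an étale algebra $B$ of the same dimension that carries a set of the same size with the same norm property. Since there are only finitely many values to compare, it suffices to show $M(A/K)\leq M(B/K)$ for some étale $B$ of rank $n$. The natural candidate is built from the reduction $A/\textup{Jac}(A)$, padded with copies of $K$ so that the dimension becomes $n$ again.

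First decompose the Artinian ring $A$ as $A\cong\prod_i A_i$ with each $A_i$ local, having maximal ideal $\m_i$, which is nilpotent, and residue field $L_i=A_i/\m_i$ with $[L_i:K]=d_i$. The key step is the norm identity
\[N_{A_i/K}(x)=N_{L_i/K}(\bar x)^{\ell_i},\qquad \ell_i=\dim_{L_i}\text{-length of }A_i .\]
To prove it, filter $A_i$ by $A_i\supseteq\m_i\supseteq\m_i^2\supseteq\dotsb\supseteq 0$. Multiplication by $x$ preserves this filtration and acts on each graded piece $\m_i^j/\m_i^{j+1}$, which is an $L_i$-vector space, as multiplication by $\bar x$. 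The determinant is multiplicative along the filtration, and for an $L_i$-vector space of dimension $e$ the $K$-determinant of multiplication by $\bar x$ equals $N_{L_i/K}(\bar x)^e$. Writing $\ell_i=\sum_j\dim_{L_i}\m_i^j/\m_i^{j+1}$, this gives the identity, and we have $\dim_K A_i=\ell_i d_i$.

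Next define $B=\prod_i\big(L_i\times K^{(\ell_i-1)d_i}\big)$, which has dimension $\sum_i\ell_i d_i=n$. This $B$ is étale provided each $L_i/K$ is separable. Map $x=(x_i)_i\in A$ to the element of $B$ with components $\bar x_i$ in $L_i$ and $N_{L_i/K}(\bar x_i)$ in each of the $(\ell_i-1)d_i$ copies of $K$. Then $N_{B/K}$ of the image of $x-y$ is
\[\prod_i N(\bar x_i-\bar y_i)\cdot N(\bar x_i-\bar y_i)^{(\ell_i-1)d_i},\]
which does not directly match $N_{A/K}(x-y)=\prod_i N(\bar x_i-\bar y_i)^{\ell_i}$. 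The padding should instead use $\ell_i-1$ further copies of $L_i$, with the diagonal map $\bar x_i\mapsto(\bar x_i,\dots,\bar x_i)$, so that
\[B=\prod_i L_i^{\ell_i}.\]
This again has dimension $n$, and the norms now agree exactly. The map $A\to B$ is a ring homomorphism, namely $A\to A/\textup{Jac}(A)$ followed by diagonal maps, so differences map to differences.

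For injectivity on $\clq$: if $x\neq y$ in $\clq$, then $N_{A/K}(x-y)=\pm1\neq0$, so $x-y$ is a unit in $A$ and cannot lie in $\textup{Jac}(A)$. Hence the image of $\clq$ has the same cardinality and satisfies $N_{B/K}=\pm1$ on differences, which gives $M(A/K)\leq M(B/K)$. Taking $A$ to be an algebra attaining $M(n,K)$, the maximum exists because clique sizes are bounded, for instance via the bound $\#K^n$ when $K$ is finite, or else in the sense of suprema. We obtain $M(n,K)\leq M(B/K)\leq M(n,K)$.

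The main obstacle is separability: for imperfect $K$ the residue fields $L_i$ may be inseparable, so $B$ would not be étale. I would first check whether the paper intends perfect $K$, which suffices for the finite fields used. Otherwise I would try to replace each inseparable $L_i$ by a separable extension of the same degree. That works cleanly only when the map $\bar x\mapsto N_{L_i/K}(\bar x)$ can be realized by some separable algebra, for example by factoring it through $K$ via $L_i\to K^{d_i}$ with components $N_{L_i/K}(\bar x)$. This fails to be a ring map, so the norm-preservation argument would need redoing, and I expect this to be the delicate point.
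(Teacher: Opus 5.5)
Your argument is essentially the paper's. The paper reduces, via products and towers, to a local $A$ with residue field $K$ and uses the diagonal map $A\to(A/\m)^n$, citing Bourbaki for the norm identity. Unwound, this produces exactly your $B=\prod_i L_i^{\ell_i}$ and your map.

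Your filtration proof of $N_{A_i/K}(x)=N_{L_i/K}(\overline{x})^{\ell_i}$ is a self-contained substitute for that citation. Unlike the paper's tower step, it does not need a copy of $L_i$ inside $A_i$. For perfect $K$, in particular the finite fields where the proposition is actually used, your proof is complete. You should delete the false start with $K^{(\ell_i-1)d_i}$, whose map is not even additive.

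The gap you flag for imperfect $K$ is real, and the paper's proof has the same limitation, since its output is the same non-\'etale $\prod_i L_i^{\ell_i}$. It cannot be closed by any $K$-linear norm-preserving map. Take $K=\F_p(t)$ and $L=K(t^{1/p})$. The norm form $a_0^p+ta_1^p+\dotsb+t^{p-1}a_{p-1}^p$ is the $p$-th power of a linear form that is not defined over $K^{\mathrm{sep}}$. So it is not a product of linear forms over $K^{\mathrm{sep}}$, which is what the norm form of an \'etale algebra composed with a linear map would be. Your suggested repair fails for the reason you give.

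What works is to give up $K$-linearity and only preserve the condition $N=\pm1$. In characteristic $p$, let $L_{i,s}$ be the separable closure of $K$ in $L_i$, with $[L_i:L_{i,s}]=p^{k_i}$, and put $k=\max_i k_i$.
\begin{enumerate}
\item Since $N_{L_i/L_{i,s}}(z)=z^{p^{k_i}}$, you get $N_{L_i/K}(z)=N_{L_{i,s}/K}(z^{p^{k_i}})$.
\item The map $z\mapsto z^{p^k}$ is a ring homomorphism $L_i\to L_{i,s}$.
\item Put $B=\prod_i L_{i,s}^{\ell_i p^{k_i}}$. It is \'etale of dimension $\sum_i \ell_i p^{k_i}[L_{i,s}:K]=\sum_i \ell_i d_i=n$.
\item Let $\varphi\colon A\to B$ place $\overline{x}_i^{\,p^k}$ diagonally in the $i$-th factor. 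This is a ring homomorphism, though only Frobenius-semilinear over $K$.
\end{enumerate}
Your norm identity then gives $N_{B/K}(\varphi(x))=N_{A/K}(x)^{p^k}$. Hence $N_{B/K}(\varphi(x)-\varphi(y))=(\pm1)^{p^k}=\pm1$ for distinct $x,y\in\clq$. This also gives injectivity on $\clq$, and the rest of your argument goes through unchanged.
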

\begin{proof}
	It suffices to show that for every commutative \(K\)-algebra \(A\) there exists an \'etale commutative \(K\)-algebra \(B\) of the same rank together with a norm-preserving \(K\)-linear map \(A\to B\). 
	Using how norms interact with products and towers of algebras, we may assume without loss of generality that \(A\) is local with maximal ideal \(\m\) and residue field \(K\).
	By \cite[Proposition~III\S{}9.4.5]{Bourbaki} we may take the diagonal embedding \(A\to (A/\m)^n\).
\end{proof}

\begin{corollary}\label{cor:split_bound}
	If \(K\) is a field with algebraic closure \(L\), then \(M(n,K)\leq M(L^n/L)\).
\end{corollary}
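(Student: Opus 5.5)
By the preceding proposition, it suffices to show that \(M(B/K)\leq M(L^n/L)\) for every \'etale commutative \(K\)-algebra \(B\) of rank \(n\). Indeed, once this is known, we get
\[
M(n,K)=M(B/K)\leq M(L^n/L).
\]
The plan is to base change \(B\) along \(K\to L\) and check that the norm condition defining \(M\) survives this change.

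First, let \(B\) be \'etale over \(K\) of rank \(n\). Then \(L\tensor_K B\) is \'etale over the algebraically closed field \(L\), and it has rank \(n\). Hence \(L\tensor_K B\cong L^n\) as \(L\)-algebras: a reduced finite-dimensional commutative algebra over an algebraically closed field is a product of copies of that field.

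Second, consider the natural map \(\iota\colon B\to L\tensor_K B\), \(x\mapsto 1\tensor x\). It is injective because \(L\) is free over \(K\), and it is a ring homomorphism. Norms are compatible with base change: the norm of \(x\) is the determinant of multiplication by \(x\). Choosing a \(K\)-basis of \(B\), which is also an \(L\)-basis of \(L\tensor_K B\), multiplication by \(1\tensor x\) has the same matrix as multiplication by \(x\) on \(B\). Therefore
\[
N_{(L\tensor_K B)/L}(1\tensor x)=N_{B/K}(x)\in K\subseteq L.
\]

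Third, let \(\clq\subseteq B\) satisfy \(N_{B/K}(x-y)=\pm1\) for all distinct \(x,y\in\clq\). Since \(\iota\) is additive and injective, \(\iota(\clq)\) has the same cardinality as \(\clq\). It also satisfies \(N(\iota(x)-\iota(y))=N_{B/K}(x-y)=\pm1\). Transporting \(\iota(\clq)\) through the isomorphism with \(L^n\), which preserves norms, gives \(\#\clq\leq M(L^n/L)\).

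No step is a serious obstacle. The only point needing care is the identification \(L\tensor_K B\cong L^n\). This uses the definition of \'etale as geometrically reduced, or equivalently a product of finite separable extensions. Each factor \(L\tensor_K K_i\) is then a product of copies of \(L\), with as many copies as \(\dim_K K_i\).
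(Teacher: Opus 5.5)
Your proposal is correct and follows the paper's proof. You reduce to an \'etale algebra via the preceding proposition, embed it into its base change \(L\otimes_K B\cong L^n\), and use that norms are compatible with this base change. You also spell out details the paper leaves implicit: injectivity of the embedding, the basis argument for the norm identity, and why the base change splits as \(L^n\).
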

\begin{proof}
	By the proposition, \(M(n,K)\) is attained by a clique in some \'etale \(A\). We have an embedding \(f:A\to A\tensor_K L \cong L^n\), and \(N_{A/K} = N_{A\tensor_K L /L} \circ f\).
\end{proof}

We first use this method to produce a general upper bound on the clique number.

\begin{lemma}\label{lem:ugly_matrix}
	Let $m\in\Z_{>0}$, let \(p\) be \(0\) or a prime greater than \(m^{m/2}\), and let $M\in\textup{Mat}_m(\Z/p\Z)$ have a zero diagonal and $\pm1$ coefficients elsewhere. Then $\rk(M)\ge m-1$.
\end{lemma}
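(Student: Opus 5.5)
Let $\bar M=\pm$-lift of $M$ to an integer matrix $\tilde M$ with zero diagonal and entries in $\{\pm1\}$ off the diagonal. The plan is to find an $(m-1)\times(m-1)$ minor of $\tilde M$ that is a nonzero integer of absolute value less than $p$ (or simply nonzero when $p=0$). Such a minor then remains nonzero modulo $p$, and this gives $\rk(M)\ge m-1$ over $\Z/p\Z$.

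For nonvanishing, I will work modulo $2$. Reduced mod $2$, $\tilde M$ becomes $J-I$, where $J$ is the all-ones matrix. Take a principal $(m-1)\times(m-1)$ submatrix. Mod $2$ it equals $J_{m-1}-I_{m-1}$. The eigenvalues of $J_{m-1}$ are $m-1$ and $0$ (the latter with multiplicity $m-2$), so
\[\det(J_{m-1}-I_{m-1})=(-1)^{m-2}(m-2).\]
This is odd exactly when $m$ is odd, so for odd $m$ every principal $(m-1)$-minor of $\tilde M$ is odd and hence nonzero.

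For even $m$, I instead compute the full determinant $\det(J_m-I_m)=(-1)^{m-1}(m-1)$, which is odd. So $\det\tilde M$ is an odd integer, nonzero, and $\tilde M$ has full rank $m$ over $\Q$.

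For the size bound, note that every row of an $r\times r$ submatrix of $\tilde M$ has Euclidean norm at most $\sqrt{r-1}$. Hadamard's inequality therefore gives
\[|\det|\le (r-1)^{r/2}<m^{m/2}\le p \quad\text{for } r\le m.\]
So for even $m$ the full determinant is a nonzero integer of absolute value less than $p$. It survives reduction mod $p$, and $M$ has rank $m\ge m-1$. For odd $m$ the principal $(m-1)$-minor has absolute value at most $(m-2)^{(m-1)/2}<p$ and is nonzero, so it survives mod $p$ and gives rank at least $m-1$. When $p=0$ only nonvanishing is needed, and the parity argument already supplies it.

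The main obstacle is the parity computation, and in particular seeing that $m$ must be split by parity. For odd $m$, the full determinant $m-1$ is even and may genuinely vanish; for example $m=3$ can give a singular matrix. That is why the conclusion is only $m-1$ and not $m$. The Hadamard estimate is routine by comparison, and it is exactly what the hypothesis $p>m^{m/2}$ is tailored to supply.
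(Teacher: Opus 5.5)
Your proof is correct, and its skeleton matches the paper's. Both lift $M$ to an integer matrix with entries in $\{-1,0,1\}$, show that some $(m-1)\times(m-1)$ minor is nonzero by reducing modulo $2$, and then use Hadamard's inequality to see that this minor has absolute value below $p$.

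The difference is in the mod-$2$ step. The paper never computes a determinant. Over $\F_2$ one has $I-M\equiv J$, which has rank $1$, so $\rk(I)\le\rk(M)+\rk(I-M)$ gives $\rk_{\F_2}(M)\ge m-1$ at once. This passes to $\Q$ and yields some nonzero $(m-1)$-minor of the lift (not necessarily principal), which is then bounded crudely by $m^{m/2}$. Your explicit evaluation $\det(J_k-I_k)=(-1)^{k-1}(k-1)$ costs a parity case split. In exchange it pinpoints which minor works and gives slightly more: $M$ is invertible when $m$ is even, and every principal $(m-1)$-minor is nonzero mod $p$ when $m$ is odd.

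One small imprecision: the row-norm bound $\sqrt{r-1}$ holds for principal submatrices and the full matrix, not for arbitrary $r\times r$ submatrices, whose rows can have norm $\sqrt r$. You only use principal ones, and $r^{r/2}\le m^{m/2}<p$ would suffice anyway, so nothing breaks.
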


\begin{proof}
	If \(p=2\), then \(M-1\) is the all-one matrix, which has rank \(1\). Using the rank inequality \(\rk(1)\leq\rk(M)+\rk(1-M)\) we get \(\rk(M)\geq m-1\).
	If \(p=0\), then the result follows from the previous case.
	Suppose now that \(p>m^{m/2}\) is prime and let \(N\) be the lift of \(M\) to \(\Z\) with coefficients in \(\{-1,0,1\}\). 
	By the previous case \(N\) has an \((m-1)\times(m-1)\) minor \(N'\) with non-zero determinant.
	Moreover, \(|\det(N')|\leq m^{m/2}<p\) by Hadamard's inequality, so the corresponding minor \(M'\) of \(M\) is invertible. Hence \(\rk(M)\geq m-1\).
\end{proof}

\begin{theorem}\label{thm:field_upper_bound}
	Let \(n\in\Z_{\geq0}\). Then for any field \(K\) of characteristic \(0\) or sufficiently large prime characteristic it holds that \(M(n,K)\leq 2^n+1\).
\end{theorem}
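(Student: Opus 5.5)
The plan is to reduce to the split case via Corollary~\ref{cor:split_bound} and then bound clique sizes with a rank argument, using Lemma~\ref{lem:ugly_matrix} as the lower bound on rank. Let \(L\) be an algebraic closure of \(K\); it has the same characteristic \(p\) as \(K\). By Corollary~\ref{cor:split_bound} it suffices to show \(M(L^n/L)\leq 2^n+1\). Suppose, for contradiction, that \(\clq\subseteq L^n\) has \(N_{L^n/L}(x-y)=\pm1\) for all distinct \(x,y\in\clq\) and \(\#\clq\geq 2^n+2\). Any subset of such a set has the same property, so I pass to a subset with exactly \(m=2^n+2\) elements. This fixes \(m\), and hence also what ``sufficiently large characteristic'' means: I will require \(p=0\) or \(p>m^{m/2}=(2^n+2)^{(2^n+2)/2}\), which depends only on \(n\).

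Next I form the \(m\times m\) matrix \(M=(N(x-y))_{x,y\in\clq}\). Its diagonal is \(0\), and its off-diagonal entries are \(\pm1\) in \(L\), hence lie in the prime field of \(L\), which is \(\Z/p\Z\) or \(\Q\). The same proof as Lemma~\ref{lem:ugly_matrix} covers \(p=0\) via the integer lift, as that lemma already notes. So the lemma gives \(\rk(M)\geq m-1=2^n+1\).

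For the upper bound on the rank, I write \(N(x-y)=\prod_{i=1}^n(x_i-y_i)\) and expand the product:
\[N(x-y)=\sum_{S\subseteq\{1,\dots,n\}}\Big(\prod_{i\in S}x_i\Big)\Big(\prod_{i\notin S}(-y_i)\Big).\]
This says \(M=AB^{T}\), where \(A\) and \(B\) are \(m\times 2^n\) matrices over \(L\). The rows of \(A\) are indexed by \(x\) and the columns by \(S\), with entries \(\prod_{i\in S}x_i\); the matrix \(B\) is defined analogously with entries \(\prod_{i\notin S}(-y_i)\). Hence \(\rk(M)\leq 2^n\). Rank is unchanged by extending scalars from the prime field to \(L\), so the two rank bounds can be compared, giving \(2^n+1\leq 2^n\), a contradiction. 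Therefore \(\#\clq\leq 2^n+1\).

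I expect the main obstacle to be the bookkeeping about fields rather than any hard estimate. First, the characteristic bound must be chosen before the clique is known, and this is why I truncate to exactly \(2^n+2\) elements. Second, Lemma~\ref{lem:ugly_matrix} is stated over \(\Z/p\Z\), while the factorisation \(M=AB^T\) lives over \(L\). Both points are handled by observing that \(M\) has entries in the prime field and that rank is invariant under field extension. Everything else is the routine expansion of the product.
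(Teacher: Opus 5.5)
Your proof is correct and is essentially the paper's argument. The paper factors the matrix \((N(x-y))_{x,y}\) as \(f\colon K^{C}\to V\), \(x\mapsto\prod_i(x_i-t_i)\), into the \(2^n\)-dimensional space of multilinear polynomials, followed by evaluation \(g\colon V\to K^{C}\); this is exactly your factorization \(M=AB^{T}\), and it then applies Lemma~\ref{lem:ugly_matrix} for the rank lower bound. Your truncation to exactly \(2^n+2\) elements is a worthwhile addition, since it makes explicit that the characteristic threshold \(m^{m/2}\) depends only on \(n\), which the paper leaves implicit.
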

\begin{proof}
	By Corollary~\ref{cor:split_bound} we may assume \(K\) is algebraically closed and consider \(A=K^n\).
	Suppose $\clq\subseteq A$ is such that $N_{A/K}(x-y)=\pm1$ for all distinct $x,y\in \clq$.
	Let \(V\subseteq K[t_1,\dotsc,t_n]\) be the \(K\)-vector space spanned by the \(2^n\) monomials of degree \(0\) or \(1\) in each variable. 
	Consider the map \(\clq\to V\) given by \((x_1,\dotsc,x_n)\mapsto \prod_i (x_i-t_i)\) and let \(f:K^\clq\to V\) be its \(K\)-linear extension. 
	Conversely, consider the evaluation map \(g:V\to K^\clq\) given by \(P\mapsto (P(y))_{y\in \clq}\).
	Then \(\#\clq-1\leq \rk(g\circ f) \leq \rk(f) \leq \dim_K V = 2^n\), where the first inequality is Lemma~\ref{lem:ugly_matrix}. It follows that \(M(n,K)-1\leq2^n\).
\end{proof}

\begin{remark}
	In small characteristic, we might get cliques of larger size, for example $M(\F_{3^n}/\F_3)=3^n$ for all \(n\geq0\). More generally, if $p$ is an odd prime and \(n\in\Z_{\geq 1}\) is a multiple of \(d=(p-1)/2\), then \(\F_{p^{n/d}}\subseteq \F_{p^n}\) is a clique and \(M(n,\F_p)\geq p^{n/d}\).
\end{remark}

\begin{corollary}
	If \(L\) is a number field of degree \(n\) that is Galois over \(\Q(\sqrt{D})\) for some square-free \(D<-3\), then \(\omega(\mathcal{O}_L)\leq 2^{n/2}+1\). In particular this holds when \(L/\Q\) is Galois and \(\Delta_{L/\Q}<-3\). For \(D=-3\), we have \(\omega(\mathcal{O}_L)\leq 3^{n/2}\).
\end{corollary}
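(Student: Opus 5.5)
The plan is to work relative to the quadratic subfield \(F=\Q(\sqrt{D})\) instead of over \(\Q\), and then apply Theorem~\ref{thm:field_upper_bound} with \(K=F\). Since \(L\) is Galois over \(F\), it contains \(F\), and so \([L:F]=n/2\). The key input is that for square-free \(D<-3\) the unit group of \(\mathcal{O}_F\) is just \(\{\pm1\}\). This holds because an imaginary quadratic field has unit rank \(0\), and only \(\Q(i)\) and \(\Q(\sqrt{-3})\) contain roots of unity other than \(\pm1\).

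\textbf{Case \(D<-3\).} Let \(\clq\subseteq\mathcal{O}_L\) be an exceptional clique. For distinct \(x,y\in\clq\), the element \(x-y\) is a unit of \(\mathcal{O}_L\). Hence \(N_{L/F}(x-y)\) is a unit of \(\mathcal{O}_F\), so it equals \(\pm1\). Thus \(\clq\), viewed inside the commutative \(F\)-algebra \(L\) of dimension \(n/2\), satisfies the condition in Definition~\ref{def:M}. This gives \(\#\clq\leq M(L/F)\leq M(n/2,F)\). Since \(F\) has characteristic \(0\), Theorem~\ref{thm:field_upper_bound} yields \(M(n/2,F)\leq 2^{n/2}+1\). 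Galoisness of \(L/F\) is only used to guarantee \(F\subseteq L\).

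\textbf{Galois \(L/\Q\).} Write \(\alpha_1,\dotsc,\alpha_n\) for the \(\Q\)-conjugates of an integral basis. Then \(\sqrt{\Delta_{L/\Q}}=\det(\sigma_i(\alpha_j))\) is an element of \(L\), because \(L\) is Galois and therefore contains all conjugates. So \(F=\Q(\sqrt{\Delta_{L/\Q}})\subseteq L\), and \(F\) is imaginary quadratic because \(\Delta_{L/\Q}<0\). \(L\) is automatically Galois over the intermediate field \(F\). The hypothesis \(\Delta_{L/\Q}<-3\) is meant to ensure that the square-free part \(D\) of \(\Delta_{L/\Q}\) satisfies \(D<-3\), i.e.\ that \(F\neq\Q(i),\Q(\sqrt{-3})\). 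I expect the main subtlety to be pinning down this reading of the hypothesis. If needed, I would state the corollary explicitly in terms of the square-free part of \(\Delta_{L/\Q}\).

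\textbf{Case \(D=-3\).} Here \(\mathcal{O}_F^*=\mu_6\), so norms of differences can be sixth roots of unity rather than \(\pm1\), and Theorem~\ref{thm:field_upper_bound} no longer applies directly. Instead I would reduce modulo the prime \(\mathfrak{p}=(\sqrt{-3})\) of \(\mathcal{O}_F\), whose residue field is \(\F_3\).
\begin{enumerate}
\item Let \(A=\mathcal{O}_L/\mathfrak{p}\mathcal{O}_L\). This is a commutative \(\F_3\)-algebra of dimension \([L:F]=n/2\), so \(\#A=3^{n/2}\).
\item For distinct \(x,y\in\clq\), the element \(x-y\) is a unit of \(\mathcal{O}_L\), so its image in \(A\) is a unit and in particular nonzero.
\item Therefore the reduction map \(\clq\to A\) is injective, by Proposition~\ref{prop:injective_quotient}. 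This gives \(\#\clq\leq 3^{n/2}\).
\end{enumerate}
All sixth roots of unity reduce to \(\pm1\) in \(\F_3\), which is why this prime behaves well, although only the injectivity of reduction is actually needed for the bound.
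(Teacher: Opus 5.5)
Your proposal is correct and is essentially the paper's proof: bound $\#C$ by $M(n/2,\Q(\sqrt{D}))$ using $N_{L/\Q(\sqrt{D})}(\mathcal{O}_L^*)=\{\pm1\}$ and Theorem~\ref{thm:field_upper_bound}, observe $\Q(\sqrt{\Delta_{L/\Q}})\subseteq L$ in the Galois case, and for $D=-3$ use the quotient of $\mathcal{O}_L$ of size $3^{n/2}$ coming from the ramified prime above $3$. The subtlety you flag is genuine, and the paper's proof passes over it as well: $\Delta_{L/\Q}<-3$ does not force the square-free part of $\Delta_{L/\Q}$ below $-3$ (e.g.\ $\Z[i]$ with $\Delta=-4$, or $\Q(\zeta_9)$ with $\Delta=-3^9$), so that clause should indeed be read in terms of the square-free part.
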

\begin{proof}
	Let \(\clq\subseteq\mathcal{O}_L\) be an exceptional clique. 
	Then \(\#\clq\leq M(n/2,\Q(\sqrt{D}))\leq 2^{n/2}+1\) by combining Theorem~\ref{thm:field_upper_bound} and the fact that \(N_{L/\Q(\sqrt{D})}(\mathcal{O}_L^*)=\{\pm1\}\). 
	In the case that \(L/\Q\) is Galois and \(\Delta_{L/\Q}<-3\), we have \(\smash{\Q(\sqrt{\Delta_{L/\Q}})}\subseteq L\). 
	In the case that \(D=-3\), the prime \(3\) is not inert in \(\mathcal{O}_L\), and thus \(\mathcal{O}_L\) has a quotient of size at most \(3^{n/2}\).
\end{proof}

Theorem~\ref{thm:field_upper_bound} provides no direct improvement over Corollary~\ref{cor:clique_bound}.ii on the clique number of orders, although it is interesting that we obtain a similar bound for number fields. However, the theorem, together with Proposition~\ref{prop:LleqM}, implies that every sufficiently large prime can be an obstruction to reaching a clique size of \(2^n\). This motivates us to apply this method to small rank orders.

\begin{theorem}\label{thm:computational_commutative}
	Exceptional cliques in orders of rank \(4\) and \(5\) have respectively at most \(10\) and \(11\) elements, and these bounds are sharp.
\end{theorem}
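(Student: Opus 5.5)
The plan splits into an upper bound, obtained by computation over small finite fields, and a lower bound, obtained from explicit cliques. For the upper bound I would use Proposition~\ref{prop:LleqM}, which gives \(\omega(\mathcal{O})\leq M(n,\F_p)\) for every prime \(p\), so it suffices to find, for \(n=4\) and \(n=5\), a single prime \(p\) with \(M(4,\F_p)\leq 10\) and \(M(5,\F_p)\leq 11\). By the proposition preceding Corollary~\ref{cor:split_bound}, \(M(n,\F_p)\) is attained on an \'etale algebra \(B\). Every \'etale \(\F_p\)-algebra of dimension \(n\) is a product \(\prod_i \F_{p^{d_i}}\) with \(\sum d_i=n\), so the candidates are indexed by partitions of \(n\): five for \(n=4\) and seven for \(n=5\). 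For each such \(B\), the norm \(N_{B/\F_p}\) is the product of the field norms of the components. Computing \(M(B/\F_p)\) is then a maximum-clique problem in the Cayley graph on \(B\) whose connection set is \(\{x\in B : N(x)=\pm1\}\). We may translate any clique so that it contains \(0\), and its remaining elements then lie in this connection set.

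For the choice of prime, small primes are useless. For \(p=2\) one only gets \(2^n\), and for \(p=3\) one gets \(3^n\), since every nonzero element of \(\F_3\) is \(\pm1\). Theorem~\ref{thm:field_upper_bound} guarantees \(M(n,\F_p)\leq 2^n+1\) for large \(p\), but that is still too weak. So I would search over moderate primes (\(p=5,7,11,13,\dots\)) and compute \(M(B/\F_p)\) for all partitions, hoping to find a prime where the maximum over \(B\) is exactly \(10\) (resp.\ \(11\)). If no single prime works, there is a refinement: the reduction map \(\mathcal{O}\to\mathcal{O}/p\mathcal{O}\) determines a splitting type, and an order of a given splitting type at one prime can be combined with information at a second prime. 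Concretely, I would bound \(\omega(\mathcal{O})\) by the minimum over \(p\) of \(M(\mathcal{O}/p\mathcal{O})\), and then do a case analysis over the pairs of possible étale (or non-étale) reductions at two primes, checking that every combination forces size at most \(10\) (resp.\ \(11\)). Non-étale reductions need no separate treatment, since the norm-preserving map into an étale algebra from the proof of that proposition handles them.

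The main obstacle is the computation itself. The number of vertices is \(p^n\), which for \(n=5\) and \(p\approx 11\) already exceeds \(10^5\), so plain clique search is expensive. To make it feasible I would quotient by symmetries. We may normalize so that \(0\) and \(1\) lie in the clique, since translation and multiplication by norm-\(\pm1\) units preserve the structure, as in Lemma~\ref{lem:equivalent_clique}. The Galois automorphisms of each component and permutations of isomorphic factors also act. Branching then only over the common neighbourhood of \(0\) and \(1\) should reduce the search to a manageable size.

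For sharpness, Example~\ref{ex:nf_deg_4} already gives a clique of size \(10\) in a rank \(4\) order. For rank \(5\), I would exhibit an explicit clique of size \(11\), found by computer search over orders \(\Z[\alpha]\) of small discriminant; natural candidates are totally real quintic fields such as \(\Q(\zeta_{11}+\zeta_{11}^{-1})\), whose Leutbecher--Niklasch clique from Theorem~\ref{thm:nt_solution} has size \(10\). I would try to extend that clique by one further element. Failing that, I would search other small-discriminant quintic orders for such a clique.
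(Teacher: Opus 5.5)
Your plan matches the paper's proof. The upper bounds come from Proposition~\ref{prop:LleqM} together with a computer calculation of \(M(n,\F_p)\) at a single prime: the paper reports \(M(4,\F_7)=10\) and \(M(5,\F_{19})=11\), so your two-prime fallback is never needed. Sharpness comes from Example~\ref{ex:nf_deg_4} and from a clique of size \(11\) in \(\Z[\zeta_{11}+\zeta_{11}^{-1}]\), which is exactly the order you proposed to search.
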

\begin{proof}
	We have computed \(M(4,\F_7)=10\) and \(M(5,\F_{19})=11\).
	For \(n=4\), this bound is attained by the clique of Example~\ref{ex:nf_deg_4}, and for \(n=5\) by \(\Z[\zeta_{11}+\zeta_{11}^{-1}]\).
\end{proof}

A special case of the function \(M\) has been well studied in the context of generalized Paley graphs (see \cite{PaleyGraphs}).
Let $q$ be a prime power and $d$ a divisor of $q-1$ such that $(q-1)/d$ is even if $q$ is odd. 
The generalized Paley graph $P(q,d)$ is the graph with vertex set $\F_q$ and edge set $\{\{x,y\}\,|\,x-y\in(\F_q^*)^d\}$, where $(\F_q^*)^d$ is the set of $d$-th power elements in $\F_q^*$.

The best known upper bound on the clique number of $P(q,d)$ in the case $d\mid p-1$ is $O(\sqrt{q/d})$.
But $M(\F_q/\F_p)$ is simply the clique number of $P(q,(p-1)/2)$, so Theorem \ref{thm:field_upper_bound} is an improvement on this bound for large \(p\) and \(d=(p-1)/2\), because our bound is independent of $p$.  
Using the same ideas, we extend our approach to a larger class of Paley graphs to obtain two new upper bounds.

Let $s_p(a)$ denote the sum of the digits of the integer $a$ written in base $p$, and let \(\omega(G)\) denote the clique number of a graph \(G\).

\begin{proposition}
	For \(p\) prime, \(q>1\) a power of \(p\), and \(d\mid q-1\) we have $\omega(P(q,d))\le 2^{s_p(\frac{q-1}{d})}+1$.
\end{proposition}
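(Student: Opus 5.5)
The plan is to mimic the proof of Theorem~\ref{thm:field_upper_bound}, replacing the norm by the power map that defines $P(q,d)$. Set $e=(q-1)/d$ and write $e=\sum_j a_jp^j$ in base $p$, so $s_p(e)=\sum_j a_j$. The key identity is $x^e=\prod_j (x^{p^j})^{a_j}$, and each Frobenius twist $x\mapsto x^{p^j}$ is additive. An element $z\in\F_q^*$ is a $d$-th power if and only if $z^e=1$. So a clique $\clq$ in $P(q,d)$ is a set with $(x-y)^e=1$ for all distinct $x,y\in\clq$.

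First I will linearize. Put $s=s_p(e)$ and list the Frobenius powers with multiplicity as $\sigma_1,\dots,\sigma_s$, where $\sigma_i(x)=x^{p^{j(i)}}$ and the index $j$ occurs $a_j$ times. Then
\[(x-y)^e=\prod_{i=1}^s\bigl(\sigma_i(x)-\sigma_i(y)\bigr).\]
Now take the $2^s$-dimensional space $V\subseteq\F_q[t_1,\dots,t_s]$ of multilinear polynomials. Define $f\colon\F_q^{\clq}\to V$ as the linear extension of $x\mapsto\prod_i(\sigma_i(x)-t_i)$, and $g\colon V\to\F_q^{\clq}$ as evaluation at the points $(\sigma_1(y),\dots,\sigma_s(y))$ for $y\in\clq$. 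The matrix of $g\circ f$ is $\bigl((x-y)^e\bigr)_{x,y\in\clq}$. It has zero diagonal and all off-diagonal entries equal to $1$, so it is $J-I$, where $J$ is the all-one matrix.

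Then I bound the rank. Since $\rk(I)\le\rk(J-I)+\rk(J)$, we get $\rk(J-I)\ge\#\clq-1$ in every characteristic. This is easier than Lemma~\ref{lem:ugly_matrix}: the entries are exactly $1$ rather than $\pm1$, so no largeness condition on $p$ is needed. Hence $\#\clq-1\le\rk(g\circ f)\le\rk(f)\le\dim V=2^s$, which gives $\omega(P(q,d))\le 2^{s_p(e)}+1$.

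The main obstacle is making sure the definition of $P(q,d)$ really reduces to the condition $(x-y)^e=1$, symmetrically in $x,y$. When $q$ is odd, $e$ must be even, so $(y-x)^e=(x-y)^e$ and the edge relation is well defined. The equivalence "$z$ is a $d$-th power iff $z^e=1$" holds because $\F_q^*$ is cyclic and $d\mid q-1$. The decomposition through Frobenius is where $s_p$ enters: additivity of $\sigma_i$ lets each factor be written as $\sigma_i(x)-\sigma_i(y)$. That is what makes $f$ and $g$ compatible and keeps the multilinear space of dimension $2^s$ instead of the much larger $2^{e}$.
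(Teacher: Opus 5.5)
Your proof is correct and is essentially the paper's argument. The paper maps $\F_q$ into $\F_q^{s_p(e)}$ via $x\mapsto(x,\dots,x,x^p,\dots,x^p,\dots)$, with multiplicities given by the base-$p$ digits of $e=(q-1)/d$, so that the ``norm'' of a difference of clique elements is $(x-y)^e=1$. It then reruns the multilinear-polynomial rank argument of Theorem~\ref{thm:field_upper_bound}, using that the resulting matrix plus the identity is the all-one matrix, so Lemma~\ref{lem:ugly_matrix} is not needed, exactly as in your proposal.
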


\begin{proof}
	Write \(q=p^n\) and \((q-1)/d=\sum_i a_i p^i\) with \(0\leq a_i < p\).
	Consider the linear map \(f:\F_q\to A=\F_q^{a_0+a_1+\dotsm}\) given by \(x\mapsto (x,\dotsc,x,x^p,\dotsc,x^p,\dotsc)\), where \(\dim_{\F_q}(A)=s_p((q-1)/d)\).
	Let \(C\) be the image under \(f\) of a clique of \(P(q,d)\).
	Then \(N_{A/\F_q}(x-y)=1\) for all distinct \(x,y\in C\). The matrix \(N=(N_{A/\F_q}(x-y))_{x,y\in C}\) has rank at least \(\#C-1\), since \(N+1\) is the all-one matrix. 
	The result now follows analogously to the proof of Theorem~\ref{thm:field_upper_bound}.
\end{proof}

This bound has interest only if $s_p(\frac{q-1}{d})\le \frac{1}{2}\log_2(q)$, otherwise it is worse than the trivial bound $\omega(P(q,d))\le\sqrt{q}$. Now we give another bound in the case $d\mid p-1$ and $p$ is large. This condition on \(d\) is `complementary' to the condition \(d\mid(q-1)/(p-1)\) studied by \cite{PaleyGraphs}.

\begin{lemma}\label{lem:ugly_matrix_cyclotomic}
	Let $m,r\in\Z_{>0}$, let \(p\) be a sufficiently large prime, and let $M\in\textup{Mat}_m(\overline{\F}_p)$ have a zero diagonal and $r$-th root of unity coefficients elsewhere. Then $\rk(M)\ge \sqrt{m-1}$.
\end{lemma}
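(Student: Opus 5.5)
The plan is to lift to characteristic $0$, where complex conjugation is available, and there use the rank inequality for Hadamard (entrywise) products, $\rk(A\circ B)\leq\rk(A)\rk(B)$. This inequality holds because $A\circ B$ is a submatrix of $A\otimes B$. The point is that $N\circ\overline{N}$ is the matrix $J-I$, where $J$ is the all-one matrix, and $J-I$ has essentially full rank.

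\textbf{Step 1 (lifting).} Since $p$ is large we may assume $p\nmid r$. Then the $r$-th roots of unity in $\overline{\F}_p$ are exactly the images of the powers of $\zeta_r$ under a ring homomorphism $\pi:\Z[\zeta_r]\to\overline{\F}_p$, namely reduction modulo a prime $\mathfrak{P}$ above $p$, and $\pi$ is injective on $\mu_r$. Lift $M$ entrywise to $N\in\textup{Mat}_m(\Z[\zeta_r])\subseteq\textup{Mat}_m(\mathbb{C})$. The lift $N$ has zero diagonal and $r$-th roots of unity elsewhere.

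\textbf{Step 2 (ranks agree for large $p$).} Trivially $\rk(M)\leq\rk(N)$, since every vanishing minor of $N$ reduces to a vanishing minor of $M$. For the reverse inequality, note that there are only finitely many possible matrices $N$ for fixed $m,r$, because each entry lies in the finite set $\{0\}\cup\mu_r$. Hence there are only finitely many non-zero minors among all of them. Each such minor $\delta$ is a non-zero algebraic integer, so $N_{\Q(\zeta_r)/\Q}(\delta)$ is a non-zero integer; Hadamard's inequality also bounds it explicitly in terms of $m$ and $r$, as in Lemma~\ref{lem:ugly_matrix}. Take $p$ larger than all these norms. Then no non-zero minor lies in $\mathfrak{P}$, so $\pi(\delta)\neq 0$, and therefore $\rk(M)=\rk(N)$. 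This is the step that fixes what ``sufficiently large'' means, and it is the only point needing care. Its bound depends only on $m$ and $r$, which is what the statement allows.

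\textbf{Step 3 (conjugation trick).} Complex conjugation preserves rank, so $\rk(\overline{N})=\rk(N)$. The entrywise product $N\circ\overline{N}$ has entries $|\zeta|^2=1$ off the diagonal and $0$ on it, so $N\circ\overline{N}=J-I$. Over $\mathbb{C}$ this matrix has eigenvalues $m-1$ and $-1$, so its rank is $m$; it is at least $m-1$ in any case by the argument of Lemma~\ref{lem:ugly_matrix}. Combining,
\[ m-1\leq\rk(J-I)=\rk(N\circ\overline{N})\leq\rk(N)\,\rk(\overline{N})=\rk(N)^2=\rk(M)^2, \]
which gives $\rk(M)\geq\sqrt{m-1}$.
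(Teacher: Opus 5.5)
Your proof is correct and follows the paper's argument. The paper also reduces to characteristic $0$ over $\Q(\zeta_r)$, takes the minor of $M\otimes\overline{M}$ on the basis $\{e_i\otimes e_i\}$, which is your Hadamard product $N\circ\overline{N}=J-I$, and concludes from $m-1\leq\rk(N\circ\overline{N})\leq\rk(N)^2$. Your Step~2, which bounds the norms of all non-zero minors, just makes explicit the lifting step that the paper covers with ``as in Lemma~\ref{lem:ugly_matrix}''.
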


\begin{proof}
	As in Lemma~\ref{lem:ugly_matrix} it suffices to consider the characteristic \(0\) case of such matrices over \(\Q(\zeta_r)\) instead of \(\overline{\F}_p\).
	Write \(\overline{M}\) for the matrix obtained from \(M\) by applying complex conjugation coefficient-wise.
	Consider the minor \(N\) of \(M\tensor\overline{M}\) corresponding to the basis \(\{e_i\tensor e_i\,|\, 1\leq i\leq m\}\), and note that \(N+1\) is the all-one matrix. Hence \(m-1\leq \rk(N)\leq\rk(M\tensor \overline{M})=\rk(M)\cdot\rk(\overline{M})=\rk(M)^2\).
\end{proof}

\begin{proposition}
	Fix $n\in\Z_{>0}$ and $c\in\R_{>0}$. For all $p$ sufficiently large and \(d\mid p-1\) such that $(p-1)/d\le c$, it holds that $\omega(P(p^n,d))\le 4^n+1$.
\end{proposition}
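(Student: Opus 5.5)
Following the template of Theorem~\ref{thm:field_upper_bound}, I will combine the polynomial-rank trick with Lemma~\ref{lem:ugly_matrix_cyclotomic}.

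First, the setup. Write \(q=p^n\) and \(r=(p-1)/d\leq c\). An element \(z\in\F_q^*\) lies in \((\F_q^*)^d\) if and only if \(z^{(q-1)/d}=1\). Since \(d\mid p-1\), we have
\[\frac{q-1}{d}=r\cdot\frac{q-1}{p-1}=r(1+p+\dotsm+p^{n-1}).\]
Hence \(z^{(q-1)/d}=N_{\F_q/\F_p}(z)^r\). So \(z\) is a \(d\)-th power exactly when \(N(z)\in\F_p^*\) is an \(r\)-th root of unity.

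Now let \(C\) be a clique of \(P(q,d)\). Base change \(\F_q\to\F_q\tensor_{\F_p}\overline{\F}_p\cong\overline{\F}_p^n\), as in Corollary~\ref{cor:split_bound}. This embeds \(C\) into \(\overline{\F}_p^n\) via \(x\mapsto(x,x^p,\dotsc,x^{p^{n-1}})\), and the norm becomes the product of the coordinates. Consider the matrix \(M=(N(x-y))_{x,y\in C}\). It has zero diagonal and \(r\)-th roots of unity off the diagonal.

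Next I bound the rank of \(M\) from above. Let \(V\subseteq\overline{\F}_p[t_1,\dotsc,t_n]\) be the space of multilinear polynomials, so \(\dim V=2^n\). Send \(x\mapsto\prod_i(x_i-t_i)\) and evaluate at the points \(y\in C\). Exactly as in the proof of Theorem~\ref{thm:field_upper_bound}, \(M\) factors through \(V\), which gives \(\rk(M)\leq 2^n\).

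On the other side, Lemma~\ref{lem:ugly_matrix_cyclotomic} gives \(\rk(M)\geq\sqrt{\#C-1}\), provided \(p\) is large relative to \(m=\#C\) and \(r\). Combining the two bounds yields \(\#C-1\leq 4^n\), which is the claim.

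The main obstacle is the uniformity hidden in ``\(p\) sufficiently large'' in Lemma~\ref{lem:ugly_matrix_cyclotomic}, since that threshold may depend on \(m\), while \(\#C\) could a priori grow with \(p\). I would handle this by arguing by contradiction. Suppose \(\#C\geq 4^n+2\). Pass to a subclique \(C'\) of size exactly \(m=4^n+2\), which is still a clique. Now \(m\) is fixed, and \(r\) ranges over the finitely many values \(\leq c\).

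It remains to justify the characteristic-\(0\) reduction inside the lemma for these parameters. The matrix \(M\otimes\overline{M}\) has entries in \(\Z[\zeta_r]\) with absolute value at most \(1\). Over \(\F_p\), with \(r\mid p-1\), the \(r\)-th roots of unity lift to \(\Z[\zeta_r]\) via a prime above \(p\) of degree one. Complex conjugation corresponds to inversion, so the reduction of \(N\) is \(N+1=\) the all-one matrix mod \(p\), and its \((m-1)\)-minor is nonzero in characteristic \(0\). A Hadamard-type bound on the norm of that minor, which depends only on \(m\) and \(r\), shows it stays nonzero mod \(p\) for \(p\) large. Therefore the lemma applies uniformly for all \(p\) beyond a bound depending on \(n\) and \(c\), giving \(\rk(M)\geq\sqrt{m-1}>2^n\). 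This contradicts \(\rk(M)\leq 2^n\), which completes the argument.
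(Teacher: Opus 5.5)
Your argument is correct and is essentially the paper's proof. Both use the Frobenius embedding $x\mapsto(x,x^p,\dotsc,x^{p^{n-1}})$ and the fact that $N(x-y)^r=1$. Both bound $\rk(M)\le 2^n$ via multilinear polynomials as in Theorem~\ref{thm:field_upper_bound}, and obtain $\rk(M)\ge\sqrt{\#C-1}$ from Lemma~\ref{lem:ugly_matrix_cyclotomic}. Your reduction to a subclique of size exactly $4^n+2$ is a genuine improvement in rigor. It makes explicit why the threshold on $p$ depends only on $n$ and $c$, which the paper leaves implicit.

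Your final paragraph, however, is unnecessary. The lemma's threshold depends only on $(m,r)$, so with $m=4^n+2$ and $r\le c$ you may simply cite it. As written, that paragraph also misplaces the difficulty.

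\begin{itemize}
\item The minor of $N=J-I$ needs no Hadamard bound, since $\rk(J-I)\ge m-1$ in every characteristic.
\item What actually fails in characteristic $p$ is the identity $\rk(M\otimes\overline{M})=\rk(M)^2$. When $r\mid p-1$, Frobenius fixes the $r$-th roots of unity, so no automorphism of $\overline{\F}_p$ inverts them. One then only gets $\rk(M)\cdot\rk(M^*)\ge m-1$, where $M^*$ is the entrywise inverse.
\end{itemize}

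The correct transfer goes as follows. Lift $M$ to $\tilde M$ over $\Z[\zeta_r]$ via the prime $\mathfrak{P}$ above $p$. Set $k=\lceil\sqrt{m-1}\rceil$; by the characteristic-$0$ argument, $\tilde M$ has a nonzero $k\times k$ minor $\delta$. Hadamard's inequality gives $0<|N_{\Q(\zeta_r)/\Q}(\delta)|\le k^{k\varphi(r)/2}$. Hence $\delta\notin\mathfrak{P}$ once $p$ exceeds this bound, and so $\rk(M)\ge k$.
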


\begin{proof}
	Let $q=p^n$ and consider the map $\F_q\to A=\F_q^{n}$ sending $x$ to $\smash{(x,x^p,...,x^{p^{n-1}})}$. 
	The image of a clique in $P(q,d)$ under this map is a set $C\subseteq A$ with $N_{A/\F_q}(x-y)^{(p-1)/d}=1$ for all distinct $x,y\in C$. 
	We apply again the same ideas as in the proof of Theorem \ref{thm:field_upper_bound}: the matrix \(M=(N_{A/\F_q}(x-y))_{x,y\in C}\) satisfies the conditions to Lemma~\ref{lem:ugly_matrix_cyclotomic} ($(p-1)/d\le c$ allows one to fix $r$ in the lemma), so \(\#C-1\leq\rk(M)^2\leq (2^n)^2\).
\end{proof}

\section{Spreads}\label{sec:spreads}

In this section we provide a more geometric interpretation to cliques. 
It allows us to give an easier proof of the result by Leutbecher and Niklasch given in Theorem~\ref{thm:nt_solution}.
In this section \(\nrg\) is a ring, \(M\) is a non-zero (left) \(\nrg\)-module, and \(E=\End_\nrg(M)\).
Note that \(\nrg\) need not be commutative.

\begin{definition}[Spread]
	A \emph{spread} of \(M\) is an \(\nrg\)-module \(N\) together with a set \(S\) of \(\nrg\)-submodules of $N$ with \(U\cong M\) for each \(U\in S\) and \(U\oplus V = N\) for each pair of distinct \(U,V\in S\). 
	%A \emph{pointed spread} of \(M\) is a spread containing \(V_\infty := 0 \times M\).
	A \emph{pointed spread} of \(M\) is a set \(S\) of \(\nrg\)-submodules of $M^2$ with \(U\oplus V = M^2\) for each pair of distinct \(U,V\in S\), and with \(V_\infty:= 0\times M \in S\).
\end{definition}

Each pointed spread \(S\) of \(M\) is a spread, because for each \(V\in S\setminus \{V_\infty\}\) it holds that \(V \cong (V \oplus V_\infty)/V_\infty \cong M^2/V_\infty \cong M\).
Conversely, by choosing an isomorphism \(N\cong M^2\) and considering the action of \(\Aut(M^2)\), each spread containing at least two elements is under this action equivalent to a pointed spread.

\begin{theorem}\label{thm:clique_spread}
	Let \(n\in\Z_{\geq0}\).
	There is a bijection between the exceptional cliques of \(E\) of size \(n\) and the pointed spreads of \(M\) of size \(n+1\).
\end{theorem}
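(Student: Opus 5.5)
The plan is to use graphs of endomorphisms. To each \(f\in E\) associate the submodule \(\Gamma_f=\{(m,f(m)) : m\in M\}\subseteq M^2\). The claimed bijection will be \(\clq\mapsto S_\clq=\{\Gamma_f : f\in\clq\}\cup\{V_\infty\}\).

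First I check that \(S_\clq\) is a pointed spread. For any \(f\), \(\Gamma_f\oplus V_\infty=M^2\), since \((a,b)=(a,f(a))+(0,b-f(a))\) and \(\Gamma_f\cap V_\infty=0\). For \(f\neq g\), I claim \(\Gamma_f\oplus\Gamma_g=M^2\) if and only if \(f-g\in E^*\).
\begin{itemize}
\item The intersection \(\Gamma_f\cap\Gamma_g\) consists of the \((m,f(m))\) with \((f-g)(m)=0\), so it is zero exactly when \(f-g\) is injective.
\item Write \((a,b)=(m,f(m))+(m',g(m'))\). Subtracting \(g\) of the first coordinate gives \(b-g(a)=(f-g)(m)\). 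So the sum is all of \(M^2\) exactly when \(f-g\) is surjective. Given surjectivity, for any \((a,b)\) choose \(m\) with \((f-g)(m)=b-g(a)\) and set \(m'=a-m\). Conversely, taking \(a=0\) shows every element of \(M\) lies in the image of \(f-g\).
\end{itemize}
A bijective module endomorphism has its set-theoretic inverse \(R\)-linear, so it is a unit of \(E\). Hence \(S_\clq\) is a pointed spread exactly when \(\clq\) is an exceptional clique. It has size \(n+1\) because \(f\mapsto\Gamma_f\) is injective and no \(\Gamma_f\) equals \(V_\infty\) (since \(M\neq0\)).

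Next, surjectivity onto pointed spreads of size \(n+1\). Let \(S\) be such a spread and \(U\in S\setminus\{V_\infty\}\). Then \(U\oplus V_\infty=M^2\), so the first projection \(\pi_1\) restricts to an isomorphism \(U\to M\): its kernel is \(U\cap V_\infty=0\), and it is surjective because \(M^2=U+V_\infty\). Put \(f=\pi_2\circ(\pi_1|_U)^{-1}\in E\). Then \(U=\Gamma_f\). So every element of \(S\) other than \(V_\infty\) is a graph. The set of corresponding \(f\) is a clique of size \(n\) by the first step, and the two constructions are mutually inverse.

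There is no real obstacle here. The only point needing care is that injective plus surjective gives invertibility in \(E\), which holds because the inverse map of a bijective \(R\)-linear map is \(R\)-linear. Noncommutativity of \(R\) does not matter, since everything is stated for left modules with \(E\) acting on \(M\).
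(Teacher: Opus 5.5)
Your proof is correct and follows the same route as the paper: non-$V_\infty$ members of a pointed spread are graphs of endomorphisms, and $\Gamma_f\oplus\Gamma_g=M^2$ exactly when $f-g\in E^*$. The differences are cosmetic. The paper checks invertibility of $\bigl(\begin{smallmatrix}1&1\\a&b\end{smallmatrix}\bigr)$ by a column operation and normalizes an arbitrary isomorphism $M\to V$ by $x_V^{-1}$. You instead argue elementwise, use the first projection directly, and make explicit the injectivity of $C\mapsto S_C$ and that the two maps are mutually inverse.
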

\begin{proof}
	First we construct the map from cliques to pointed spreads.
	Suppose \(\clq\subseteq E\) is a clique of size \(n\).
	For \(a\in \clq\) consider the \(k\)-linear map \(M\to M^2\) given by \(\left(\begin{smallmatrix}1 \\ a\end{smallmatrix}\right)\) and let \(V_a\) be its image, the graph of \(a\).
	We will show that \(S=\{V_a\,|\, a\in \clq\}\cup\{V_\infty\}\) is a spread.
	Let \(a,b\in \clq\) be distinct. 
	To show that \(V_a \oplus V_b = M^2\), it suffices to show that the \(k\)-linear map \(M^2\to M^2\)  given by \(\big(\begin{smallmatrix} 1 & 1 \\ a & b \end{smallmatrix}\big)\) is an isomorphism.
	By subtracting the first column from the second we obtain \(\big(\begin{smallmatrix} 1 & 0 \\ a & b-a \end{smallmatrix}\big)\), which is invertible because \(b-a\) is invertible. For \(V_a\oplus V_\infty\) we need to consider the map \(\big(\begin{smallmatrix} 1 & 0 \\ a & 1 \end{smallmatrix}\big)\), which is also clearly invertible.
	Finally, we remark that \(\#S=n+1\).
	
	Suppose \(S\) is a pointed spread of \(M\) of size \(n+1\).
	Since each \(V\in S\) is isomorphic to \(M\) there exist \(x_V,y_V\in E\) such that \(V\) is the image of \(\left(\begin{smallmatrix}x_V \\ y_V\end{smallmatrix}\right)\).
	Let \(S'=S\setminus\{V_\infty\}\).
	Because \(V\oplus V_\infty = M^2\) for each \(V\in S'\), the matrix \(\left( \begin{smallmatrix} x_V & 0 \\ y_V & 1 \end{smallmatrix} \right)\) must be invertible and hence \(x_V\in E^*\).
	It follows that \(V\) is the image of \(\left( \begin{smallmatrix} 1 \\ a_V \end{smallmatrix} \right)\), where \(a_V=y_V \cdot x_V^{-1}\).
	As \(U\oplus V = M^2\) for each pair of distinct \(U,V\in S'\), we similarly obtain \(a_U-a_V\in E^*\).
	Hence \(\clq=\{a_V\,|\, V\in S'\}\) is a clique, and it has size \(n\).
\end{proof}

Now we restrict our attention to a simple kind of spreads: we assume $\nrg$ is the ring of integers of some number field and take $M=\nrg$. In this case $E\cong\nrg$, so Theorem \ref{thm:clique_spread} produces cliques in $\nrg$. Proposition \ref{prop:no_galois_clique} shows that non-trivial Galois-invariant cliques do not exist. On the contrary, Galois-invariant spreads do, and they allow us to give a new interpretation of the best known infinite family of commutative cliques.

\begin{proposition}\label{prop:spread_example}
	If $\nrg$ is the ring $\Z[\zeta+\zeta^{-1}]$, where \(\zeta\) is a primitive root of unity of prime order \(p\), 
	then $\{\nrg \zeta^i\,|\, 0\le i < p\}$ is a spread of \(R\) in $\Z[\zeta]$. 
\end{proposition}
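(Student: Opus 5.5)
Write \(\nrg=\Z[\zeta+\zeta^{-1}]\) and \(N=\Z[\zeta]\). We have to check two things. First, each \(\nrg\zeta^i\) is an \(\nrg\)-submodule of \(N\) isomorphic to \(\nrg\). Second, \(\nrg\zeta^i\oplus\nrg\zeta^j=N\) for \(i\neq j\). The first is immediate: multiplication by the unit \(\zeta^i\) is an injective \(\nrg\)-linear map \(\nrg\to N\) with image \(\nrg\zeta^i\).

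For the second, multiplication by \(\zeta^{-i}\) is an \(\nrg\)-module automorphism of \(N\). So it suffices to show \(\nrg\oplus\nrg\zeta^k=N\) for \(k=j-i\not\equiv 0\pmod p\). We first treat \(k=1\). The identity \(\zeta^2=(\zeta+\zeta^{-1})\zeta-1\) shows that \(\nrg+\nrg\zeta\) is closed under multiplication by \(\zeta\). It contains \(1\), so it is all of \(\Z[\zeta]=N\). The sum is direct by a rank count: \(N\) has \(\Z\)-rank \(p-1\), while each summand has rank \((p-1)/2\) by Theorem~\ref{thm:nt_solution}. A surjection \(\nrg^2\to N\) of free \(\Z\)-modules of equal rank is an isomorphism. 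In field terms, \(\zeta\notin\Q(\zeta+\zeta^{-1})\), so \(\{1,\zeta\}\) is a basis of \(\Q(\zeta)\) over the real subfield.

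For general \(k\), replace \(\zeta\) by the primitive \(p\)-th root of unity \(\eta=\zeta^k\). We have \(\Z[\eta]=\Z[\zeta]\), and \(\Z[\eta+\eta^{-1}]=\nrg\), since both equal the ring of integers of the maximal real subfield. More elementarily, \(\eta+\eta^{-1}=\zeta^k+\zeta^{-k}\) is a polynomial in \(\zeta+\zeta^{-1}\), and symmetrically with the roles of \(\zeta\) and \(\eta\) exchanged. Applying the \(k=1\) argument with \(\eta\) in place of \(\zeta\) gives \(\nrg\oplus\nrg\eta=N\). This completes the spread property.

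The step needing the most care is the identification \(\Z[\eta+\eta^{-1}]=\Z[\zeta+\zeta^{-1}]\) together with the rank bookkeeping. Writing \(\zeta^k+\zeta^{-k}\) as a Chebyshev-type polynomial in \(\zeta+\zeta^{-1}\) with integer coefficients, and doing the same in reverse, settles it without appealing to maximal orders.
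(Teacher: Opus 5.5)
Your proposal is correct and follows the paper's proof. The identity $\zeta^2=(\zeta+\zeta^{-1})\zeta-1$ gives $R\oplus R\zeta=\Z[\zeta]$, your substitution $\eta=\zeta^k$ is the paper's ``Galois action'' step, and multiplying by $\zeta^{-i}$ reduces an arbitrary pair $R\zeta^i, R\zeta^j$ to that case. You also make explicit two points the paper leaves implicit: that the sum is direct (your rank count), and that $\Z[\zeta^k+\zeta^{-k}]=R$.
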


%\begin{proof}
%The ring of integers of the real subfield of the $p$-th cyclotomic field is exactly $\Z[\zeta+\zeta^{-1}]$ and one easily checks that the conditions of Theorem \ref{thm:galois_spread} are met.
%\end{proof}

\begin{proof}
	From $\zeta^2=\zeta(\zeta+\zeta^{-1})-1$ it is easy to see that $\nrg\oplus  \nrg \zeta =\nrg[\zeta]=\Z[\zeta]$. 
	By Galois action, we in fact get $\nrg\oplus \nrg\zeta^i=\Z[\zeta]$ for any $1\le i < p$. 
	Finally, $\nrg \zeta^i+\nrg\zeta^j=\zeta^i(\nrg+\zeta^{j-i}\nrg)=\zeta^i\Z[\zeta]=\Z[\zeta]$. 
	Thus we indeed have a spread.
\end{proof}

This spread together with Theorem~\ref{thm:clique_spread} gives us an alternative proof of Theorem~\ref{thm:nt_solution} when we decompose \(\Z[\zeta]=R\zeta \oplus R \cong R^2\), producing the exact same clique.
The construction from Proposition~\ref{prop:spread_example} gives rise to another Galois-invariant spread.

\begin{example}
	Consider $R=\Z[\eta_5,\eta_7]$ and $\alpha=\eta_{35}$, where $\eta_n=\zeta_n+\zeta_n^{-1}$ and \(\zeta_n\) is a primitive \(n\)-th root of unity. Then \(\{R\}\cup \{R\sigma(\alpha)\,|\, \sigma \in \textup{Gal}(\Q(\eta_{35})/\Q)\}\) is a spread of size \(13\), although this is less obvious. Consequently, we obtain a clique of size \(12\) in \(R\) of rank \(6\). 
\end{example}

Finally, we use spreads to turn commutative cliques into larger cliques in larger dimension, at the cost of commutativity.

\begin{proposition}\label{prop:spread_projections}
	Let \(S\) be a spread of \(M\) of size at least \(2\). Then there exists a way to map every $V\in S$ to some $f_V\in\End_E(M^2)$ such that \(\im(f_V)=\ker(f_V)=V\) for all \(V\in S\), and the set $\{f_V\}_{V\in S}$ is an exceptional clique.
\end{proposition}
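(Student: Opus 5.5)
The plan is to reduce the clique property to a general lemma about pairs of square-zero endomorphisms attached to complementary submodules. I read \(\End_E(M^2)\) as the ring of endomorphisms of \(M^2\) commuting with the module structure. This is identified with \(\textup{Mat}_2(E)\) acting on \(M^2\), so it suffices to produce \(\nrg\)-linear endomorphisms of the module \(N\supseteq V\) carrying the spread, after fixing an isomorphism \(N\cong M^2\).

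\textbf{Construction of \(f_V\).} Fix \(V\in S\). Since \(\#S\geq 2\), choose some \(W\in S\) with \(W\neq V\); then \(V\oplus W=N\). Both \(V\) and \(W\) are isomorphic to \(M\), so we can fix an isomorphism \(\psi_V\colon W\to V\). Let \(\pi_W\colon N\to W\) be the projection along \(V\). Set \(f_V=\psi_V\circ\pi_W\), viewed as an endomorphism of \(N\). Its kernel is exactly \(V\), because \(\psi_V\) is injective. Its image is \(\psi_V(W)=V\). Hence \(\im(f_V)=\ker(f_V)=V\), and in particular \(f_V^2=0\).

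Concretely, for a pointed spread coming from a clique via Theorem~\ref{thm:clique_spread}, one can take \(f_{V_a}=\left(\begin{smallmatrix}1\\ a\end{smallmatrix}\right)\left(\begin{smallmatrix}-a & 1\end{smallmatrix}\right)\) and \(f_{V_\infty}=\left(\begin{smallmatrix}0&0\\1&0\end{smallmatrix}\right)\). This is a useful sanity check: \(f_{V_a}-f_{V_b}\) is lower triangular with diagonal entries \(b-a\) and \(a-b\), hence invertible.

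\textbf{Key lemma.} Let \(f,g\) be endomorphisms of \(N\) with \(\im f=\ker f=U\), \(\im g=\ker g=V\), and \(U\oplus V=N\). I claim \(f-g\) is an automorphism. Since \(\ker f=U\) is a complement of \(V\), the restriction \(f|_V\colon V\to U\) is injective. It is also surjective, because \(f(N)=f(U\oplus V)=f(V)\) and \(f(N)=U\). Symmetrically, \(g|_U\colon U\to V\) is an isomorphism. With respect to the decomposition \(N=U\oplus V\), the map \(f-g\) sends \(u+v\mapsto f(v)-g(u)\). So \(f-g\) is the block anti-diagonal map \(\left(\begin{smallmatrix}0& f|_V\\ -g|_U & 0\end{smallmatrix}\right)\), whose inverse is \(\left(\begin{smallmatrix}0& -(g|_U)^{-1}\\ (f|_V)^{-1} & 0\end{smallmatrix}\right)\). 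All maps involved are module maps, so the inverse lies in the same endomorphism ring.

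Applying the lemma to \(f_U,f_V\) for distinct \(U,V\in S\) (which satisfy \(U\oplus V=N\) by the spread property) shows that \(\{f_V\}_{V\in S}\) is an exceptional clique. The lemma also forces \(f_U\neq f_V\), so \(V\mapsto f_V\) is injective and the clique has size \(\#S\).

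The main obstacle is bookkeeping rather than substance. One must make sure the endomorphisms live in the intended ring, and that after transporting \(N\cong M^2\) everything is expressed in \(\textup{Mat}_2(E)\). For pointed spreads the explicit matrices above settle this directly. For general spreads, conjugating by the chosen isomorphism \(N\cong M^2\) preserves invertibility and the image and kernel conditions, so no further work is needed.
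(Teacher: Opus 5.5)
Your proposal is correct and follows essentially the same route as the paper. Like the paper, you define each $f_V$ by killing $V$ and mapping a complementary member of the spread isomorphically onto $V$; the paper's fixed-point-free permutation $g$ plays the role of your choice $W\neq V$. You then show that $f_U-f_V$ decomposes along $U\oplus V$ into the two isomorphisms $f_U|_V\colon V\to U$ and $-f_V|_U\colon U\to V$, which is the paper's argument written out with the block inverse made explicit.
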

\begin{proof}
	Choose a permutation \(g:S\to S\) without fixed points.
	For \(V\in S\) we have \(M^2\cong V\oplus g(V)\) and \(V\cong g(V)\), so we may indeed choose \(f_V\in\End_E(M^2)\) so that \(f_V(V)=0\) and \(f_V(g(V))=V\).
	
	Now let \(f_\cdot\) be any such map. For distinct \(U,V\in S\), the map \(f_U-f_V\) acts as the direct sum of two restrictions \(f_U:V\to U\) and \(-f_V:U\to V\), so it suffices by symmetry to show that \(f_U:V\to U\) is an isomorphism. However, this follows because the natural map \(V\to M/\ker(f_U)\) is an isomorphism.
\end{proof}

\begin{theorem}\label{thm:clique_doubling}
	Let \(\clq\subseteq E\) be a commutative exceptional clique. Then \(\textup{Mat}_2(E)\) has an exceptional clique of size \(2\cdot \#\clq\). Consequently \(\omega(\textup{Mat}_2(E))\geq 2\omega_{\textup{com}}(E)\).
\end{theorem}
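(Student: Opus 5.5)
The plan is to write the new clique down explicitly as a set of \(2\times 2\) block matrices over \(E\), in two families of size \(\#\clq\) each, and to check invertibility of all differences inside a suitable commutative subring of \(E\). The spread results of this section (Theorem~\ref{thm:clique_spread}, Proposition~\ref{prop:spread_projections}) serve as a guide. The last sentence of the statement follows directly by taking \(\clq\) of size \(\omega_{\textup{com}}(E)\).

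\textbf{Step 1: a commutative ring where determinants decide invertibility.} Let \(A\subseteq E\) be the double centralizer of \(\clq\). It is commutative because \(\clq\) is commutative, and it contains \(\clq\). If \(d\in A\) is a unit of \(E\), then \(d^{-1}\) commutes with everything that commutes with \(d\), so \(d^{-1}\in A\). Hence \(A^*=A\cap E^*\), and in particular all differences of distinct clique elements are units of \(A\). For a matrix in \(\textup{Mat}_2(A)\subseteq\textup{Mat}_2(E)\), invertibility is then governed by its determinant in \(A\) (adjugate formula). I will also use the simpler fact that a block-triangular matrix with unit diagonal entries is invertible.

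\textbf{Step 2: the two families.} First, take \(X_a=\big(\begin{smallmatrix} a&0\\0&a\end{smallmatrix}\big)\) for \(a\in\clq\). Second, take \(Y_a=\big(\begin{smallmatrix} a&1\\ w_a&a\end{smallmatrix}\big)\), with \(w_a\in A\) still to be chosen.
\begin{itemize}
\item The differences \(X_a-X_b\) are invertible for \(a\neq b\) because they are diagonal with unit entries.
\item The differences \(Y_a-Y_b=\big(\begin{smallmatrix} a-b&0\\ w_a-w_b&a-b\end{smallmatrix}\big)\) are triangular with unit diagonal, so they are invertible for every choice of the \(w\)'s.
\item The cross differences \(X_b-Y_a=\big(\begin{smallmatrix} d&-1\\ -w_a&d\end{smallmatrix}\big)\), with \(d=b-a\), have determinant \(d^2-w_a\).
\end{itemize}
So the whole construction reduces to the following requirement: choose \(w_a\in A\) such that \(w_a\in A^*\) (this covers \(b=a\)) and \((b-a)^2-w_a\in A^*\) for all \(b\in\clq\setminus\{a\}\). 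The resulting \(2\#\clq\) matrices are pairwise distinct, since two matrices in the same family differ on the diagonal and the two families differ in the upper-right entry.

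\textbf{Step 3: the main obstacle.} The difficulty is choosing the \(w_a\), that is, finding for each \(a\) a unit \(w\) with \(w-(b-a)^2\) a unit for all \(b\neq a\). A fixed constant such as \(w=\pm1\) generally fails. If a direct choice cannot be found, I will allow more general second-family matrices \(\big(\begin{smallmatrix} \pi(a)&1\\ w_a&a\end{smallmatrix}\big)\), where \(\pi\) is a permutation of \(\clq\). The cross differences then have diagonal entries \(b-\pi(a)\) and \(b-a\), both built from clique differences. I would use the freedom in \(\pi\) and in \(w_a\) so that every cross difference is either triangular with unit diagonal, or has a zero diagonal entry but unit off-diagonal entries. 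This is the same combinatorial bookkeeping as in the proof of Proposition~\ref{prop:spread_projections}, where the fixed-point-free permutation \(g\) does exactly this job. Equivalently, one can aim to build a spread of \(M^2\) with \(2\#\clq\) members out of the graphs \(V_a\) and their images under a suitable automorphism of \(M^2\), and then apply Proposition~\ref{prop:spread_projections}. Pinning down this choice is the step I expect to be hardest.
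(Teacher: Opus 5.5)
There is a genuine gap. The second family is never actually constructed, and none of the routes you sketch can close the gap in general.

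The Step~2 ansatz already fails for \(E=\Z\), \(\clq=\{0,1\}\). With \(a=0\) you need \(\det Y_0=-w_0\) and \(\det(1-Y_0)=1-w_0\) both in \(\{\pm1\}\), which is impossible.

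The fallback \(\big(\begin{smallmatrix}\pi(a)&1\\ w_a&a\end{smallmatrix}\big)\) does not rescue the bookkeeping either. The case \(b=a\) gives \(\det(X_a-Y_a)=-w_a\), which forces \(w_a\) to be a unit. So as soon as \(\#\clq\geq 3\) there is some \(b\notin\{a,\pi(a)\}\) for which \(X_b-Y_a\) has four unit entries. That matrix is neither triangular nor has a zero diagonal entry, and you would need \((b-\pi(a))(b-a)-w_a\in A^*\). This is a unit-equation condition, and there is no general reason for it to hold.

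The spread alternative is a dead end too. Proposition~\ref{prop:spread_projections} outputs a clique of the same size as the spread. By Theorem~\ref{thm:clique_spread}, a spread with \(2\#\clq\) members corresponds to a clique of size \(2\#\clq-1\) in the endomorphism ring of the module being spread. That is impossible, for instance, when \(E\) is commutative and \(\clq\) is a maximum clique with \(\#\clq\geq 2\), which is exactly the situation of the final claim.

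The missing idea is to split the clique as \((\#\clq+1)+(\#\clq-1)\) rather than \(\#\clq+\#\clq\), using square-zero matrices. Normalize so that \(0\in\clq\), and apply Proposition~\ref{prop:spread_projections} to the pointed spread \(\{V_a\}_{a\in\clq}\cup\{V_\infty\}\) of size \(\#\clq+1\). Since \(\im(f_V)=\ker(f_V)\), each \(f_V\) satisfies \(f_V^2=0\).

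Explicitly, take \(N_a=\big(\begin{smallmatrix}-a&1\\-a^2&a\end{smallmatrix}\big)\) for \(a\in\clq\) and \(N_\infty=\big(\begin{smallmatrix}0&0\\1&0\end{smallmatrix}\big)\). Then \(N_a-N_b\) is triangular with diagonal entries \(b-a\) and \(a-b\), and \(\det(N_a-N_\infty)=1\).

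Now add the scalar matrices \(xI\) for \(x\in\clq\setminus\{0\}\); these are units since \(x=x-0\). For every square-zero \(N\) we have \((xI-N)(xI+N)=x^2I-N^2=x^2I\), so \(xI-N\) is invertible. This gives \(2\#\clq\) elements.

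Your Step~1 fits well into this argument. Working in the double centralizer \(A\), which is commutative with \(A^*=A\cap E^*\), is a cleaner reduction than replacing \(E\) by the subring generated by \(\clq\), since that subring need not contain the inverses of the differences.
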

\begin{proof}
	We may assume \(0\in \clq\). By replacing \(E\) by the ring generated by \(\clq\) we may assume \(E\) is commutative. Combining Theorem~\ref{thm:clique_spread} and Proposition~\ref{prop:spread_projections}, we obtain a clique \(\clqb\subseteq\End_E(E^2)\) of nilpotent elements and of size \(\#\clq+1\). We claim that \(\clqb\) together with the image of \(\clq\setminus\{0\}\) in \(\End_E(E^2)\), which is central, is an exceptional clique. For this it suffices to note that \(x-f\) for \(x\in \clq\setminus \{0\}\) and \(f\in \clq'\) is a unit, which follows from the fact that \((x-f)(x+f)=x^2-f^2=x^2\in E^*\).
\end{proof}

\section{Cliques from cyclic algebras}\label{sec:cyclicalgebras}

In this section we will use a non-commutative arithmetic construction to produce exceptional cliques of quadratic rate. 
%The following proposition shows that in the commutative setting the Galois action does not interact too nicely with exceptional cliques.
Contrary to the commutative case (see Proposition~\ref{prop:no_galois_clique}), in the non-commutative setting we are able to make better use of the Galois action.
In this section \(L/K\) is a cyclic Galois extension of number fields, and \(G=\langle \sigma\rangle\) is the Galois group.

\begin{definition}[Cyclic algebra]
	Consider the free left \(L\)-module \(L^{(G)}\) which we equip with a \(K\)-bilinear multiplication \((a\sigma^i) \cdot (b \sigma^j) = (a\sigma(b)) (\sigma^{i+j})\) for all \(a,b\in L\) and \(i,j\in\Z\). 
	We write \(\mathcal{A}_{L/K}\) for the subring \((\mathcal{O}_L)^{(G)}\), which is an \(\mathcal{O}_K\)-algebra.
\end{definition}

Note that \(\mathcal{O}_L\) is a non-zero \(\mathcal{A}_{L/K}\)-module.
Hence we have a ring homomorphism \\ \(\mathcal{A}_{L/K}\to\End(\mathcal{O}_L)\cong \textup{Mat}_{[L:\Q]}(\Z)\). It now suffices to construct a large clique in \(\mathcal{A}_{L/K}\).

\begin{lemma}\label{lem:unit_norm}
	If \(N_{L/K}(u)\) is an exceptional unit for \(u\in \mathcal{O}_L\), then \(u\sigma\in \mathcal{A}_{L/K}\) is an exceptional unit.
\end{lemma}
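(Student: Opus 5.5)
The plan is to write down explicit inverses for \(u\sigma\) and for \(1-u\sigma\), using the fact that \(\sigma\) has order \(m=[L:K]\) in \(G\). All the work takes place in the twisted group ring \(\mathcal{A}_{L/K}\), where \((a\sigma^i)(b\sigma^j)=a\sigma^i(b)\sigma^{i+j}\). (The multiplication is written in the definition as \(a\sigma(b)\), but it is meant as \(a\sigma^i(b)\).)

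\emph{Step 1: \(u\sigma\) is a unit.} By induction, \((u\sigma)^k = u\,\sigma(u)\cdots\sigma^{k-1}(u)\,\sigma^k\). Taking \(k=m\) gives \((u\sigma)^m = N_{L/K}(u)\,\sigma^m = N_{L/K}(u)\). By hypothesis \(N_{L/K}(u)\) lies in \(\mathcal{O}_K^*\), hence in \(\mathcal{O}_L^*\). The element \(N_{L/K}(u)\) commutes with \(u\sigma\), because \(\sigma\) fixes \(K\). Therefore \(u\sigma\cdot (u\sigma)^{m-1}N_{L/K}(u)^{-1} = 1\), and the same holds on the other side. So \(u\sigma\in\mathcal{A}_{L/K}^*\).

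\emph{Step 2: \(1-u\sigma\) is a unit.} Put \(x=u\sigma\) and \(c=x^m=N_{L/K}(u)\in\mathcal{O}_K\). The identity
\[(1-x)(1+x+\dots+x^{m-1}) = 1-x^m = 1-c\]
holds in the ring, and both factors commute with each other and with \(c\), since \(c\) is central in the subring \(\mathcal{O}_K[x]\). By hypothesis \(1-c\) is a unit of \(\mathcal{O}_K\). Hence \((1+x+\dots+x^{m-1})(1-c)^{-1}\) is a two-sided inverse of \(1-x\), and it lies in \(\mathcal{A}_{L/K}\).

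I do not expect a real obstacle here. The only point needing care is the bookkeeping in the product formula for \((u\sigma)^k\): one must check that the twisted multiplication produces \(\sigma^i(u)\) factors, so that the product over a full cycle is exactly the norm. One must also check that \(\sigma^m=1\) in \(G\), so that \(x^m\) has no \(\sigma\)-part. Everything else is the geometric series identity, applied in the commutative subring \(\mathcal{O}_K[x]\).
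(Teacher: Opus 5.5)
Your proof is correct and is essentially the paper's argument: it computes \((u\sigma)^{[L:K]}=N_{L/K}(u)\) and uses the identity \((1-u\sigma)\sum_{i=0}^{[L:K]-1}(u\sigma)^i = 1-N_{L/K}(u)\) to conclude that both \(u\sigma\) and \(1-u\sigma\) are invertible. Your remark that the multiplication rule should read \(a\sigma^i(b)\) rather than \(a\sigma(b)\) is also correct.
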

\begin{proof}
	Write $n=[L:K]$. By assumption we have units
	\[ (u\sigma)^n = \big(u\sigma(u)\dotsm \sigma^{n-1}(u)\big) \sigma^n= N_{L/K}(u) \quad\textup{and}\quad (1-u\sigma)\cdot \sum_{i=0}^{n-1} (u\sigma)^i = 1 - (u\sigma)^n=1-N_{L/K}(u),\]
	from which it follows respectively that \(u\sigma\) and \(1-u\sigma\) are invertible.
\end{proof}

\begin{proposition}\label{prop:nt_square_clique}
	Suppose that \(\clq\subseteq \mathcal{O}_L\) is a set such that \(N_{L/K}(x-y)=\pm1\) for all distinct \(x,y\in \clq\), and that \(N_{L/K}(u^2)\) is an exceptional unit for some \(u\in \mathcal{O}_L\). Then \(\clqb=\{x+yu\sigma\,|\, x,y\in \clq\}\subseteq\mathcal{A}_{L/K}\) is an exceptional clique.
\end{proposition}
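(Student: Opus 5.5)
We must show that for distinct pairs \((x,y)\neq(x',y')\) in \(\clq\times\clq\), the difference \(d=(x-x')+(y-y')u\sigma\) is invertible in \(\mathcal{A}_{L/K}\). Write \(a=x-x'\) and \(b=y-y'\) and \(n=[L:K]\). First note that every nonzero difference of elements of \(\clq\) is a unit of \(\mathcal{O}_L\). Indeed, if \(N_{L/K}(z)=\pm1\) for \(z\in\mathcal{O}_L\), then \(z\) divides its norm in \(\mathcal{O}_L\): the cofactor \(\prod_{i\ge1}\sigma^i(z)\) lies in \(\mathcal{O}_L\). Also \(N_{L/K}(u)\) is a unit, since its square \(N_{L/K}(u^2)\) is a unit by hypothesis. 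We split into three cases according to whether \(a\) or \(b\) vanishes.

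\emph{Case \(b=0\).} Here \(d=a\in\mathcal{O}_L^*\subseteq\mathcal{A}_{L/K}^*\), so we are done.

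\emph{Case \(a=0\).} Here \(d=b\cdot u\sigma\). The factor \(b\) is a unit. For \(u\sigma\), the computation in Lemma~\ref{lem:unit_norm} gives \((u\sigma)^n=N_{L/K}(u)\), which is a central unit. Hence \(u\sigma\) has a two-sided inverse \((u\sigma)^{n-1}N_{L/K}(u)^{-1}\), and \(d\) is a unit.

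\emph{Case \(a,b\neq0\).} This is the main step. Factor \(d=a(1-v\sigma)\) with \(v=-a^{-1}bu\in\mathcal{O}_L\); this uses that \(a\) is a unit of \(\mathcal{O}_L\) and that left multiplication by \(a^{-1}\) commutes past nothing problematic, since \(v\) sits on the left of \(\sigma\). Multiplicativity of the norm gives
\[N_{L/K}(v)=(-1)^n N_{L/K}(a)^{-1}N_{L/K}(b)N_{L/K}(u)=\pm N_{L/K}(u),\]
so \(N_{L/K}(v)^2=N_{L/K}(u^2)\) is an exceptional unit by hypothesis. I do not expect \(N_{L/K}(v)=\pm N_{L/K}(u)\) itself to be exceptional, and the sign ambiguity is exactly why the hypothesis is placed on \(u^2\). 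So instead of applying Lemma~\ref{lem:unit_norm} directly, I would run its argument with exponent \(2n\). We have \((v\sigma)^{2n}=N_{L/K}(v)^2\), and
\[(1-v\sigma)\sum_{i=0}^{2n-1}(v\sigma)^i=1-N_{L/K}(v)^2 .\]
The right-hand side is a unit because \(N_{L/K}(v)^2\) is exceptional, and it lies in \(\mathcal{O}_K\), hence is central. The sum is a polynomial in \(v\sigma\) and so commutes with \(1-v\sigma\). This yields a two-sided inverse of \(1-v\sigma\), and therefore \(d=a(1-v\sigma)\) is a unit.

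The three cases together show that \(\clqb\) is an exceptional clique. Distinct pairs give distinct elements of \(\clqb\), since \(\mathcal{A}_{L/K}\) is free over \(\mathcal{O}_L\) on the powers of \(\sigma\). So \(\#\clqb=\#\clq^2\), although this is not needed for the clique property itself.
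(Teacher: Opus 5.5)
Your proof is correct and is essentially the paper's argument: the same three-way case split, the same factorization \(d=a(1-v\sigma)\), and the geometric-series trick of Lemma~\ref{lem:unit_norm}, which you run with exponent \(2n\) instead of \(n\). The detour is harmless but unnecessary, and the side remark motivating it is false. Since \(1-N_{L/K}(u)^2=(1-N_{L/K}(u))(1+N_{L/K}(u))\) is a unit, both factors are units, so \(\pm N_{L/K}(u)\), and hence \(N_{L/K}(v)\), are exceptional units; this is exactly how the paper applies Lemma~\ref{lem:unit_norm} directly.
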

\begin{proof}
	Note that in particular \(\clq\) is an exceptional clique.
	Let \(z_1,z_2\in \clqb\) be distinct and write \(z_i=x_i+y_iu\sigma\). If \(y_1=y_2\), then \(z_1-z_2=x_1-x_2\in \mathcal{O}_L^*\), and similarly \(z_1-z_2\) is a unit if \(x_1=x_2\). Otherwise, we have \(z_1-z_2=a(1-bu\sigma)\) for some \(a,b\in\mathcal{O}_L^*\) with \(N_{L/K}(b)=\pm1\).
	By assumption on \(u\) both \(\pm N_{L/K}(u)\) are exceptional units, and thus so is \(N_{L/K}(bu)\).
	Then \(z_1-z_2\) is a unit by Lemma~\ref{lem:unit_norm} and \(\clqb\) is an exceptional clique.
\end{proof}

\begin{lemma}\label{lem:exist_square_exceptional}
	Suppose \(p,q\) are distinct odd primes and let \(K=\Q(\eta_q)\), where \(\eta_k=\zeta_k+\zeta_k^{-1}\).
	Then there is some \(u\in\mathcal{O}_{K(\eta_p)}\) for which \(N_{K(\eta_p)/K}(u^2)\) is an exceptional unit if and only if \(p\not\equiv \pm 1\ (\textup{mod }q)\).
\end{lemma}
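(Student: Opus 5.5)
We argue with residue fields at primes above \(p\) for necessity, and with an explicit cyclotomic unit for sufficiency. First the setup. Write \(L=K(\eta_p)=\Q(\eta_p,\eta_q)\). Since \(p\neq q\), the fields \(\Q(\eta_p)\) and \(\Q(\eta_q)\) are linearly disjoint, so \(L/K\) is cyclic of degree \(n=(p-1)/2\). Its Galois group is \((\Z/p\Z)^*/\{\pm1\}\), acting by \(\zeta_p\mapsto\zeta_p^c\). Since \(N(u^2)=N(u)^2\) with \(N=N_{L/K}\), the condition is that \(v=N(u)\) is a unit and \(1-v^2=(1-v)(1+v)\) is a unit. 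Equivalently, \(v\), \(1-v\) and \(1+v\) are all units of \(\mathcal{O}_K\).

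\emph{Necessity.} Suppose \(p\equiv\pm1\pmod q\). Then \(p\) splits completely in \(K\), so a prime \(\mathfrak p\mid p\) of \(K\) has residue field \(\F_p\). The extension \(L/K\) is totally ramified at \(\mathfrak p\), since \(p\) is totally ramified in \(\Q(\eta_p)\) with index \(n\), coprime to everything relevant. Let \(\mathfrak P\) be the prime above \(\mathfrak p\). All Galois conjugates of \(u\) are congruent to \(u\) modulo \(\mathfrak P\), so
\[ N(u)\equiv \bar u^{\,n}\pmod{\mathfrak P}, \]
with \(\bar u\in\F_p^*\) because the residue field of \(\mathfrak P\) is again \(\F_p\). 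Hence \(N(u)^2\equiv \bar u^{\,p-1}=1\pmod{\mathfrak p}\), so \(1-N(u^2)\in\mathfrak p\) is not a unit. This direction also covers \(q=3\), where \(K=\Q\) and every odd prime \(p\neq 3\) is \(\pm1\) modulo \(3\).

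\emph{Sufficiency.} Now assume \(p\not\equiv\pm1\pmod q\), which forces \(q\geq5\). I would take the cyclotomic unit
\[ u=\zeta_q^{a}\zeta_p+\zeta_q^{-a}\zeta_p^{-1}\in\mathcal{O}_L ,\]
with \(a\) prime to \(q\). It equals \(\zeta_q^{-a}\zeta_p^{-1}(1+\zeta_{pq}^{m})\) for a primitive \(pq\)-th root of unity \(\zeta_{pq}^{m}\), so it is a unit. Next compute \(N(u)\) by taking the product over \(c\in(\Z/p\Z)^*/\{\pm1\}\). Pairing \(c\) with \(-c\) turns \(N(u)^2\) into, up to a root of unity, \(\prod_{c=1}^{p-1}(1+\zeta_q^{2a}\zeta_p^{2c})\). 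The identity \(\prod_{c=0}^{p-1}(1-x\zeta_p^{c})=1-x^p\) with \(x=-\zeta_q^{2a}\) then gives, up to sign and a root of unity,
\[ N(u)^2=\frac{1+\zeta_q^{2ap}}{1+\zeta_q^{2a}}. \]
Fixing the root of unity should identify \(N(u)\) with \(\pm\,(\zeta_q^{ap}+\zeta_q^{-ap})/(\zeta_q^{a}+\zeta_q^{-a})\), a real cyclotomic unit \(\varepsilon\) of \(K\). The remaining task is to show that \(1-\varepsilon^2\) is a unit, by writing it as a quotient of products of factors \(1-\zeta_q^{j}\). The numerator is \((\zeta^{ap}+\zeta^{-ap})^2-(\zeta^{a}+\zeta^{-a})^2\), which factors as \((\zeta^{ap}-\zeta^{a}+\zeta^{-ap}-\zeta^{-a})(\zeta^{ap}+\zeta^{a}+\zeta^{-ap}+\zeta^{-a})\). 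Each factor splits further via sum-to-product identities into terms like \((1-\zeta^{a(p-1)})(1-\zeta^{a(p+1)})\) times roots of unity, and the second factor similarly involves \(1+\zeta^{a(p\pm1)}\). Every such factor is a unit exactly when \(q\nmid p-1\) and \(q\nmid p+1\), which is precisely the hypothesis \(p\not\equiv\pm1\pmod q\).

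I expect the main obstacle to be the sufficiency computation. The norm has to be pinned down exactly, not just up to roots of unity and signs, and the sum-to-product factorisation of \(1-N(u)^2\) has to be checked to produce only factors \(1\pm\zeta_q^{j}\) with \(q\nmid j\). If this normalisation turns out messy, I would instead work with squares throughout, using \(N(u^2)=N(u)^2\) directly. Then the claim reduces to showing that \(1-\frac{1+\zeta^{2ap}}{1+\zeta^{2a}}\), up to the fixed root of unity, is a unit. This is again a quotient of factors \(1-\zeta_q^{2a(p-1)}\) and \(1+\zeta_q^{2a}\) once the root of unity is accounted for.
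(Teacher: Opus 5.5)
Your necessity argument is the same as the paper's and is fine. The sufficiency half, however, has a genuine gap, and in fact two independent errors.

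\emph{The element \(u\) is not in \(\mathcal{O}_L\).} Your \(u=\zeta_q^{a}\zeta_p+\zeta_q^{-a}\zeta_p^{-1}\) does not lie in \(L=\Q(\eta_p,\eta_q)\). The automorphism \(\zeta_p\mapsto\zeta_p^{-1}\), \(\zeta_q\mapsto\zeta_q\) fixes \(L\), but it changes \(u\) by \((\zeta_q^{a}-\zeta_q^{-a})(\zeta_p-\zeta_p^{-1})\neq 0\). So \(u\) lies in \(\Q(\zeta_{pq})^+\), which is a quadratic extension of \(L\). Consequently \(N_{L/K}(u)\) is undefined, and pairing \(c\) with \(-c\) is illegitimate, since \(\sigma_c(u)\neq\sigma_{-c}(u)\). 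What your product over all \(c\in(\Z/p\Z)^*\) actually computes is
\[
\varepsilon=N_{\Q(\zeta_{pq})^+/K}(u)=\frac{\zeta_q^{ap}+\zeta_q^{-ap}}{\zeta_q^{a}+\zeta_q^{-a}}.
\]
You then treat this quantity inconsistently, once as \(N(u)^2\) and once as \(\pm N(u)\).

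\emph{The final unit claim is false, which is decisive.} For \(q\nmid j\), the element \(1-\zeta_q^{j}\) is never a unit, since its norm is \(q\); only quotients of two such elements are units. Since \(\varepsilon\equiv 2/2=1\) modulo the prime \((1-\zeta_q)\) above \(q\), neither \(1-\varepsilon\) nor \(1-\varepsilon^2\) can be a unit. Your own factorisation shows this: \(1-\varepsilon^2\) is a unit times \((1-\zeta_q^{a(p+1)})(1-\zeta_q^{a(p-1)})\), and the denominator \((\zeta_q^{a}+\zeta_q^{-a})^2\) is a unit, so nothing cancels the factor \((1-\zeta_q)^2\). Your fallback fails for the same reason, because
\[
1-\frac{1+\zeta_q^{2ap}}{1+\zeta_q^{2a}}=\frac{\zeta_q^{2a}\,(1-\zeta_q^{2a(p-1)})}{1+\zeta_q^{2a}}.
\]
The root cause is that your \(u\) reduces modulo the prime above \(q\) to \(\eta_p\), a unit. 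This forces the norm to be \(\equiv 1\) there.

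\emph{What is needed.} You need a \(u\in\mathcal{O}_L\) whose norm is not \(\equiv\pm1\) modulo the prime above \(q\). The paper takes
\[
u=\eta_p-\eta_q=\zeta_p^{-1}(\zeta_p-\zeta_q)(\zeta_p-\zeta_q^{-1}),
\]
which genuinely lies in \(L\). It avoids pinning down \(N_{L/K}(u)\) by computing \(N_{L/K}(u^2)=N_{K(\zeta_p)/K}(u)\), using \([K(\zeta_p):L]=2\). This gives
\[
v=\frac{(1-\zeta_q^{p})(1-\zeta_q^{-p})}{(1-\zeta_q)(1-\zeta_q^{-1})}\equiv p^2 \quad\text{modulo } 1-\zeta_q .
\]
Up to a root of unity, \(1-v\) equals
\[
\frac{(1-\zeta_q^{p+1})(1-\zeta_q^{p-1})}{(1-\zeta_q)(1-\zeta_q^{-1})}.
\]
Here the denominator absorbs both factors of \(1-\zeta_q\), so \(1-v\) is a unit exactly when \(p\not\equiv\pm1\pmod q\).
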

\begin{proof}
	Consider \(u=\eta_p-\eta_q\) and
	\[v=N_{K(\zeta_p)/K}(u)=N_{K(\zeta_p)/K}\big(\zeta_p^{-1}\cdot (\zeta_p-\zeta_q) \cdot (\zeta_p-\zeta_q^{-1})\big) = 1\cdot \frac{\zeta_q^p-1}{\zeta_q-1}\cdot\frac{\zeta_q^{-p}-1}{\zeta_q^{-1}-1}.\]
	Write $a$ and $b$ for (respectively) the numerator and denominator of the fraction on the right. Then \(v\) is a unit, since \(a\) and \(b\) are conjugate. 
	Moreover, since \(u\in K(\eta_p)\), we have \(N_{K(\eta_p)/K}(u^2)=v\). If \(p\not\equiv \pm 1\ (\textup{mod }q)\), then
	\[ 1-v = \frac{b-a}{b} = \frac{\zeta_q + \zeta_q^{-1} -\zeta_q^p-\zeta_q^{-p}}{b} = \zeta_q^{-p} \cdot \frac{(\zeta_q^{p+1}-1)(\zeta_q^{p-1}-1)}{b}, \]
	where the numerator and denominator on the right are again conjugate.
	Hence \(1-v\) is an exceptional unit.
	
	Assume conversely that $p\equiv\pm1\bmod q$. This implies that $p$ splits completely in $K$, so if $\mathfrak{p}$ is a prime of $K$ above $p$, then $\mathcal{O}_K/\mathfrak{p}\cong\F_p$. Now $\mathfrak{p}$ is totally ramified in $K(\eta_p)$, so there is a unique prime $\mathfrak{P}$ of $K(\eta_p)$ above $\mathfrak{p}$ and $\ent_{K(\eta)}/\mathfrak{P}\cong\F_p$. For $u\in\ent_{K(\eta)}$, we therefore have $N_{K(\eta)/K}(u)\equiv x^{\frac{p-1}{2}}\equiv \pm1 \bmod \mathfrak{P}$ so $N_{K(\eta_p)/K}(u^2)-1$ is never a unit.
\end{proof}

\begin{theorem}\label{th:main}
	Let \(p,q\) be distinct odd primes such that \(p\not\equiv \pm 1\ (\textup{mod }q)\).
	Then for $m=(q-1)(p-1)/4$, $\omega(\Mat_{m}(\Z))\geq (p-1)^2$
\end{theorem}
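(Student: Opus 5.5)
We set $K=\Q(\eta_q)$ and $L=K(\eta_p)=\Q(\eta_q,\eta_p)$. Since $p\neq q$, the real cyclotomic fields $\Q(\eta_p)$ and $\Q(\eta_q)$ are linearly disjoint over $\Q$: their conductors are coprime, so they intersect only in $\Q$. Hence $[L:K]=(p-1)/2$ and $[L:\Q]=(q-1)(p-1)/4=m$. The extension $L/K$ is Galois with group isomorphic to $\Gal(\Q(\eta_p)/\Q)$, which is cyclic of order $(p-1)/2$, being a quotient of $(\Z/p\Z)^*$. So $L/K$ is a cyclic extension; fix a generator $\sigma$ of its Galois group. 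We then work in the cyclic algebra $\mathcal{A}_{L/K}$. As noted after the definition of $\mathcal{A}_{L/K}$, there is a ring homomorphism $\mathcal{A}_{L/K}\to\End_\Z(\mathcal{O}_L)\cong\Mat_m(\Z)$. By Proposition~\ref{prop:injective_quotient}(ii), the image of an exceptional clique is again an exceptional clique. By part (i), applied to the nonzero module homomorphism $\mathcal{A}_{L/K}\to\mathcal{O}_L$, $z\mapsto z\cdot 1$, this map is injective on the clique, so the size is preserved.

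It then suffices to build an exceptional clique of size $(p-1)^2$ in $\mathcal{A}_{L/K}$, and we do this via Proposition~\ref{prop:nt_square_clique}. For the unit $u$ we use Lemma~\ref{lem:exist_square_exceptional}: since $p\not\equiv\pm1\pmod q$, there is $u\in\mathcal{O}_L$ with $N_{L/K}(u^2)$ an exceptional unit. For the set $\clq$ we need $p-1$ elements of $\mathcal{O}_L$ whose pairwise differences have relative norm $\pm1$. We take the Leutbecher--Niklasch clique of Theorem~\ref{thm:nt_solution}, which is a clique of size $p-1$ in $\Z[\eta_p]\subseteq\mathcal{O}_L$. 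Its pairwise differences are units of $\Z[\eta_p]$, hence units of $\mathcal{O}_L$. Their relative norms $N_{L/K}(x-y)$ lie in $\mathcal{O}_K^*$, but we need the stronger statement that they equal $\pm1$.

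To get this, let $w=x-y\in\Q(\eta_p)$. Because $\Q(\eta_p)$ and $K$ are linearly disjoint over $\Q$, restriction gives an isomorphism $\Gal(L/K)\cong\Gal(\Q(\eta_p)/\Q)$. Therefore $N_{L/K}(w)=N_{\Q(\eta_p)/\Q}(w)$, and since $w$ is a unit in $\Z[\eta_p]$ this is a rational unit, i.e.\ $\pm1$. Proposition~\ref{prop:nt_square_clique} then gives the exceptional clique $\{x+yu\sigma : x,y\in\clq\}$ in $\mathcal{A}_{L/K}$.

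These elements are pairwise distinct because $\mathcal{A}_{L/K}$ is a free left $\mathcal{O}_L$-module on $\{1,\sigma,\dots\}$ and $u\neq0$. So the clique has exactly $(p-1)^2$ elements, and mapping it into $\Mat_m(\Z)$ as above gives $\omega(\Mat_m(\Z))\geq(p-1)^2$.

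The only step needing care is the norm identity, i.e.\ the linear disjointness of $\Q(\eta_p)$ and $\Q(\eta_q)$; everything else is direct assembly of earlier results.
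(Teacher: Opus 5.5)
Your proposal is correct and follows the paper's proof essentially step for step. It uses the same fields \(K=\Q(\eta_q)\subseteq L=K(\eta_p)\), the clique of Theorem~\ref{thm:nt_solution}, the identity \(N_{L/K}=N_{\Q(\eta_p)/\Q}\) on \(\Q(\eta_p)\), Lemma~\ref{lem:exist_square_exceptional} together with Proposition~\ref{prop:nt_square_clique}, and the map \(\mathcal{A}_{L/K}\to\Mat_m(\Z)\); you also spell out the linear-disjointness and injectivity details that the paper leaves implicit.

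One minor point: for \(p=3\) you have \([L:K]=1\) and \(\sigma=1\), so your free-module argument for distinctness degenerates. Distinctness still holds, because the proof of Proposition~\ref{prop:nt_square_clique} shows that \(z_1-z_2\) is a unit whenever \((x_1,y_1)\neq(x_2,y_2)\).
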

\begin{proof}
	Take \(K=\Q(\eta_q)\) and \(L=K(\eta_p)\), and note that \([L:\Q]=(p-1)(q-1)/4\). 
	By Theorem~\ref{thm:nt_solution}, we have a clique \(\clq\) of size \(p-1\) in \(\mathcal{O}_L\).
	For each pair of distinct \(u,v\in \clq\) we have \(N_{L/K}(u-v)=N_{\Q(\eta_p)/\Q}(u-v)=\pm 1\).
	By Lemma~\ref{lem:exist_square_exceptional} we may apply Proposition~\ref{prop:nt_square_clique} to obtain an exceptional clique \(\clqb\subseteq\mathcal{A}_{L/K}\) of size \((p-1)^2\).
	Lastly, we have a ring homomorphism \(\mathcal{A}_{L/K}\to \Mat_{[L:\Q]}(\Z)\) that preserves the clique by Proposition~\ref{prop:injective_quotient}.
\end{proof}

Taking $q=5$ produces the best currently known infinite family of exceptional cliques.

\begin{corollary}\label{cor:q=5}
	If $p\not\equiv \pm1\ (\textup{mod }5)$ is prime, then \(\omega(\Mat_{p-1}(\Z))\geq(p-1)^2\) \qed
\end{corollary}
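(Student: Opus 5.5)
This is Theorem~\ref{th:main} with $q=5$, so the plan is to check the hypotheses and compute the dimension. We specialize to $q=5$, which is an odd prime. We assume $p\neq 5$: the case $p=5$ is excluded implicitly since Theorem~\ref{th:main} needs $p,q$ distinct. If $p=5$ were intended, we would use the trivial bound $\omega(\Mat_4(\Z))=16=(p-1)^2$, recorded in the introduction via Example~\ref{ex:n4c16}. The congruence hypothesis $p\not\equiv\pm1 \pmod 5$ is exactly the hypothesis of the corollary. For $p=2$ one must be careful, since Theorem~\ref{th:main} requires $p$ odd. In that case, however, $(p-1)^2=1$ and $\Mat_1(\Z)$ trivially contains the one-element clique $\{0\}$.

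For odd $p\neq 5$ with $p\not\equiv\pm1\pmod 5$, Theorem~\ref{th:main} gives $\omega(\Mat_m(\Z))\ge (p-1)^2$ with $m=(q-1)(p-1)/4=4(p-1)/4=p-1$. This is exactly the claim.

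The only point requiring any attention is the bookkeeping of edge cases ($p=2$ and $p=5$). The substantive content, namely the cyclic-algebra construction together with Lemma~\ref{lem:exist_square_exceptional}, is entirely supplied by Theorem~\ref{th:main}. Concretely, that construction uses $K=\Q(\sqrt5)=\Q(\eta_5)$ and $L=K(\eta_p)$, where $[L:\Q]=p-1$. It takes the clique of size $p-1$ from Theorem~\ref{thm:nt_solution} and the element $u=\eta_p-\eta_5$.
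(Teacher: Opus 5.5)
Your argument is correct and is the paper's own: the corollary is stated with \qed as the specialization $q=5$ of Theorem~\ref{th:main}, where $m=(q-1)(p-1)/4=p-1$. Your treatment of $p=2$ and $p=5$ goes beyond the paper, which does not address them. Note that $p=5$ genuinely satisfies $p\not\equiv\pm1 \pmod 5$ rather than being ``implicitly excluded''. So it really does need the separate appeal to Example~\ref{ex:n4c16} that you supply, and that example is a nontrivial construction, not a trivial bound.
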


\begin{remark}\label{rem:goldbach}
It is not unreasonable to expect that we may, analogously to Theorem~\ref{thm:goldbach}, extend the result of Corollary~\ref{cor:q=5} to all even dimensions \(n\) by writing \(n\) as a sum of \(p_i-1\), where the \(p_i\) are primes congruent to \(2\) or \(3\) modulo \(5\) and are all approximately equally large. 
If we define \(\rho(n)\) to be the maximum of \(\min_i p_i\) running over all decompositions \(n=\sum_i (p_i-1)\) with \(p_i\not\equiv \pm1\ (\textup{mod }5)\) prime, then a heuristic argument suggests that \(|\rho(n)-n/l(n)|=O(\log(n)^2)\), where \(l(n)\) is the minimum number of terms needed in the decomposition. For \(n\leq 10^8\), we verified experimentally that \(|\rho(n)-n/l(n)|\leq 30 \log(n)^2\) and that if \(l(n)\neq 1\), then \(l(n)=2\) if \(n\equiv 2,3,4\ (\textup{mod }5)\) and \(l(n)=3\) otherwise.
Therefore, if $n$ is even one can in practice construct an exceptional clique in $\Mat_n(\Z)$ of size around $(n/l(n))^2$.
\end{remark}

% Analogously to Theorem~\ref{thm:goldbach}, we can combine these cliques to get cliques in dimensions that are sums of $p_i-1$ where the $p_i$'s are primes congruent to $2,3\bmod 5$. To do this efficiently, we need to know if a given integer can decompose as a sum of these terms efficiently; the clique size is limited by the $\min$ of these terms.\\

% \textbf{Experimental facts:}\\
% Denote by $\mathbb{P}$ the set of primes that are congruent to 2 or 3 modulo 5. Let $n>0$ be an even integer such that $n+1\notin\mathbb{P}$.
% \begin{enumerate}
	% \item If $n=2,3,4\bmod5$, then $n=(p_1-1)+(p_2-1)$ whith $p_1,p_2\in\mathbb{P}$.
	% \item If $n=0,1\bmod5$, then $n=(p_1-1)+(p_2-1)+(p_3-1)$ whith $p_1,p_2,p_3\in\mathbb{P}$.
	% \end{enumerate}
% Denote by $\rho(n)$ the maximum of $\min_i (p_i-1)$ over all decompositions $n=\sum (p_i-1)$ with $p_i\in\mathbb{P}$. When $n$ is large, we expect $\rho(n)\sim \frac{n}{2}$ in the first case and $\rho(n)\sim\frac{n}{3}$ in the second. More precisely, if $l(n)=\begin{cases}2,\text{ if }n=2,3,4\bmod 5\\3,\text{ if }n=0,1\bmod 5\end{cases}$ a probabilistic argument suggests $|\frac{n}{l(n)}-\rho(n)|=O(\log^2(n))$.
% We computer-checked this for $n$ up to $10^8$ and found that the constant in the $O(\log^2(n))$ is $0.90$ in the average case and $30$ in the worst case. This could be refined by computing this constants for each class $\bmod 5$.\\
% Therefore, if $n$ is even one can in practice construct an exceptional clique in $\Mat_n(\Z)$ of size $\frac{n^2}{l(n)^2}$.

The cyclic algebra construction of cliques can, with a twist, also be used to construct some of the best known cliques in small dimensions. For the remainder of this section we fix some non-zero \(\eta\in\ent_L\).

\begin{definition}[Twisted cyclic algebra]
	Define the \emph{twisted cyclic algebra} $\A_{L/K}^\eta$ to be the algebra similar to $\A_{L/K}$ but with the relation $\sigma^{[L:K]}=\eta$ instead of $\sigma^{[L:K]}=1$.
\end{definition}

Note that \(\ent_L\) is generally not an \(\A_{L/K}^\eta\)-module, and thus we no longer have a morphism \(\mathcal{A}_{L/K}^\eta\to\Mat_{[L:\Q]}(\Z)\).
Instead, we obtain the following.

\begin{lemma}\label{lem:mat_embedding_twist}
	Let $F$ be an intermediate extension of $L/K$ and suppose that $\eta\in N_{L/F}(\ent_L)$. 
	Then there is a ring homomorphism $\A_{L/K}^\eta\to \Mat_{[F:K][L:\Q]}(\Z)$.
\end{lemma}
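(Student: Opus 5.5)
We construct an explicit module for \(\A_{L/K}^\eta\) that is free over \(\Z\) of rank \([F:K][L:\Q]\); its endomorphism ring then gives the required ring homomorphism. Write \(n=[L:K]\), \(d=[L:F]\), \(e=[F:K]\), so \(n=de\) and \(\tau=\sigma^e\) generates \(\Gal(L/F)\). Choose \(\beta\in\ent_L\) with \(N_{L/F}(\beta)=\eta\). The idea is to let \(\sigma\) act on \(M=\ent_L^{\,e}\) through a "twisted shift", arranged so that its \(n\)-th power is multiplication by \(\eta\). Then \(M\) is free of rank \(e[L:\Q]\) over \(\Z\), and \(\End_\Z(M)\cong\Mat_{[F:K][L:\Q]}(\Z)\).

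Define the action on \(M\) as follows. For \(a\in\ent_L\), let \(a\) act diagonally by \(a\cdot(m_0,\dots,m_{e-1})=(a m_0,\sigma^{-1}(a)m_1,\dots,\sigma^{-(e-1)}(a)m_{e-1})\). Up to conventions, this is the action of \(L\) on \(\bigoplus_{i<e}\sigma^i\ent_L\). Let \(\sigma\) act by
\[\sigma(m_0,\dots,m_{e-1})=(\beta\,\sigma(m_{e-1}),\sigma(m_0),\dots,\sigma(m_{e-2})).\]
This is the matrix of right multiplication in \(\A^\eta_{L/K}\) restricted to the span of \(1,\sigma,\dots,\sigma^{e-1}\), with the wrap-around coefficient \(\beta\) in place of \(\sigma^e\).

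The relations to check are the following.
\begin{enumerate}
\item Each \(\ent_L\)-action is additive and multiplicative, and the map \(\ent_L\to\End_\Z(M)\) is a ring homomorphism. This is immediate from the definition.
\item The commutation rule \(\sigma a=\sigma(a)\sigma\) holds. Because the diagonal weights shift by exactly one power of \(\sigma\) from slot to slot, this is a direct check in each coordinate. The one place needing care is the wrap-around slot, where it uses \(\sigma^{-(e-1)}\) composed with \(\sigma\) together with \(\sigma^{e}\)-twisting. If the wrap-around does not match, I would repair it by choosing the diagonal weights as \(\sigma^{-i}\) on slot \(i\) and noting that after one full cycle the twist is exactly \(\tau=\sigma^e\).
\item The twisted relation \(\sigma^n=\eta\) holds. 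Applying the shift \(e\) times returns each coordinate to its place, multiplied by a single conjugate of \(\beta\) and twisted by \(\tau\). Applying it \(n=de\) times therefore multiplies each coordinate by a product of \(\tau^j(\beta)\) over \(0\le j<d\), up to an overall \(\sigma\)-twist that is trivial since \(\sigma^n=\id\). This product is \(N_{L/F}(\beta)=\eta\), up to the conjugation \(\sigma^{-i}\) matching the diagonal weight of slot \(i\). That conjugation is exactly how the element \(\eta\in\ent_L\) acts on slot \(i\), so \(\sigma^n\) acts as \(\eta\).
\end{enumerate}
Once these relations hold, the universal property of \(\A_{L/K}^\eta\) gives a ring homomorphism \(\A_{L/K}^\eta\to\End_\Z(M)\). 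Here \(\A_{L/K}^\eta\) is the \(\ent_L\)-skew-polynomial ring in \(\sigma\) modulo \(\sigma^n-\eta\), and since \(\eta\) is central only after twisting, this should be read as the two-sided ideal generated by \(\sigma^n-\eta\).

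The main obstacle is the bookkeeping in step (iii), which tracks which conjugate of \(\beta\) lands in which slot. I would organize it by viewing \(M\) as \(\A_{L/K}^\eta\otimes_{B}\ent_L\), where \(B=\ent_L[\tau]/(\tau^d-\eta)\) acts on \(\ent_L\) via \(\tau\mapsto\beta\tau\). The computation \((\beta\tau)^d=N_{L/F}(\beta)=\eta\) shows that \(\ent_L\) is a \(B\)-module. As a left \(B\)-module, \(\A_{L/K}^\eta\) is free on \(1,\sigma,\dots,\sigma^{e-1}\), so the induced module is \(\ent_L^e\), which is \(\Z\)-free of the stated rank. This formulation makes all the relations automatic and avoids the coordinate check, so I would present the proof this way.
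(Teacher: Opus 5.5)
Your final induced-module argument is essentially the paper's proof. The paper uses that $\A^\eta_{L/K}$ is free over $B=\A^\eta_{L/F}$ on $1,\sigma,\dots,\sigma^{[F:K]-1}$ to get $\A^\eta_{L/K}\to\Mat_{[F:K]}(B)$, untwists $B\to\A_{L/F}$ by $\tau\mapsto x\tau$ with $N_{L/F}(x)=\eta$, and lets $\A_{L/F}$ act on $\ent_L$; this is exactly the action of $\A^\eta_{L/K}$ on your module $\A^\eta_{L/K}\otimes_B\ent_L$. Present it that way, because your explicit coordinate model is inconsistent (for $e\geq 2$, slot $1$ of $\sigma\cdot(a\cdot m)$ is $\sigma(a)\sigma(m_0)$ while that of $\sigma(a)\cdot(\sigma\cdot m)$ is $a\sigma(m_0)$; the tensor product actually induces $\sigma\colon(m_0,\dots,m_{e-1})\mapsto(\beta\tau(m_{e-1}),m_0,\dots,m_{e-2})$), and note that, exactly as in the paper, freeness over $B$ tacitly requires $\sigma(\eta)=\eta$, since otherwise $\sigma^{n}=\eta$ forces $(\eta-\sigma(\eta))\sigma=0$.
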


\begin{proof}
	Let \(x\in\mathcal{O}_L\) be such that \(N_{L/F}(x)=\eta\).
	Firstly, \(\mathcal{A}_{L/K}^\eta\) is a free left-\(\mathcal{A}_{L/F}^\eta\)-module with basis \(1,\sigma,\dotsc,\sigma^{[F:K]-1}\), and thus we have a morphism \(\mathcal{A}_{L/K}^\eta\to\textup{Mat}_{[F:K]}(\mathcal{A}_{L/F}^\eta)\).
	Secondly, the \(F\)-linear map \(\mathcal{A}_{L/F}^\eta\to \mathcal{A}_{L/F}\) given by \(a\tau^i \mapsto ax^i\tau^i\) is a ring homomorphism, where \(a\in \mathcal{O}_L\) and \(\tau=\sigma^{[F:K]}\) generates the Galois group of \(L/F\). 
	Thirdly, we have a morphism \(\mathcal{A}_{L/F}\to\textup{Mat}_{[L:\Q]}(\Z)\).
	These maps combined give the required ring homomorphism.
\end{proof}

We obtain the following twisted version of Proposition~\ref{prop:nt_square_clique}.

%\begin{lemma}\label{lem:unit_norm_twisted}
%If \(N_{L/K}(u)\eta\) is an exceptional unit for \(u\in \mathcal{O}_L\), then \(u\sigma\in \mathcal{A}_{L/K}\) is an exceptional unit.
%\end{lemma}

\begin{proposition}\label{prop:nt_square_clique_twisted}
	Suppose \(\clq\subseteq \mathcal{O}_L\) is a set such that \(N_{L/K}(x-y)=\pm1\) for all distinct \(x,y\in \clq\), and \(\eta^2\) is an exceptional unit, then \(\clqb=\{x+y\sigma\,|\, x,y\in \clq\}\subseteq\mathcal{A}_{L/K}^\eta\) is an exceptional clique. \qed
\end{proposition}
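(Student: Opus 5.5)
I will follow the proof of Proposition~\ref{prop:nt_square_clique} almost verbatim, replacing Lemma~\ref{lem:unit_norm} by a twisted analogue. Write \(n=[L:K]\). In \(\A_{L/K}^\eta\) we have, for \(u\in\ent_L\),
\[ (u\sigma)^n = u\sigma(u)\dotsm\sigma^{n-1}(u)\,\sigma^n = N_{L/K}(u)\,\eta. \]
Here \(\eta\in\ent_L\), and I need \(\sigma^n=\eta\) to be central for the relations to be consistent; this presumably forces \(\eta\in\ent_K\), and I will assume it, or check that the computation only uses the displayed product. The twisted analogue is then: if \(N_{L/K}(u)\eta\) is an exceptional unit, then \(u\sigma\) is an exceptional unit in \(\A_{L/K}^\eta\). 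The same telescoping identity
\[ (1-u\sigma)\sum_{i=0}^{n-1}(u\sigma)^i = 1-(u\sigma)^n = 1-N_{L/K}(u)\eta \]
gives this. Invertibility of \(u\sigma\) follows from \((u\sigma)^n\) being a unit, which yields a right inverse \((u\sigma)^{n-1}((u\sigma)^n)^{-1}\) and similarly a left inverse. Invertibility of \(1-u\sigma\) follows because the sum commutes with \(1-u\sigma\), so we get two-sided inverses.

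Next, take distinct \(z_i=x_i+y_i\sigma\) with \(x_i,y_i\in\clq\). If \(y_1=y_2\), then \(z_1-z_2=x_1-x_2\in\ent_L^*\), since \(N_{L/K}(x_1-x_2)=\pm1\) makes it a unit of \(\ent_L\). If \(x_1=x_2\), then \(z_1-z_2=(y_1-y_2)\sigma\), which is a unit because \(\sigma^n=\eta\) is a unit. Here I use that \(\eta^2\) exceptional implies \(\eta\) is a unit. Otherwise \(z_1-z_2=a(1-b\sigma)\) with \(a=x_1-x_2\) and \(b=-a^{-1}(y_1-y_2)\), both units, and \(N_{L/K}(b)=\pm1\). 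By the twisted lemma it suffices that \(N_{L/K}(b)\eta=\pm\eta\) is an exceptional unit.

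The main, though small, obstacle is checking that \(\pm\eta\) are both exceptional given that \(\eta^2\) is. First, \(\eta\) is a unit. Then \(1-\eta^2=(1-\eta)(1+\eta)\) is a unit in the commutative ring \(\ent_L\), so both \(1-\eta\) and \(1+\eta\) are units. Hence \(\eta\) and \(-\eta\) are exceptional units, which finishes the argument exactly as in Proposition~\ref{prop:nt_square_clique}.
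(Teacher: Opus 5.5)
Your proof is correct and follows the paper's route: twist Lemma~\ref{lem:unit_norm} so that \((u\sigma)^n=N_{L/K}(u)\eta\), note that \(\eta^2\) exceptional gives \(\pm\eta\) exceptional via \(1-\eta^2=(1-\eta)(1+\eta)\), and rerun the proof of Proposition~\ref{prop:nt_square_clique}. Your caveat that \(\sigma^n=\eta\) must commute with \(\sigma\) is a fair point the paper glosses over, but your computation uses only associativity and the relations \(\sigma c=\sigma(c)\sigma\), \(\sigma^n=\eta\), so it applies wherever these hold, e.g.\ in Example~\ref{ex:n4c16}, where \(\eta=\phi\) is realized as the central scalar \(\phi I\in\Mat_2(\Z[\phi])\).
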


\begin{proof}
One can adapt Lemma \ref{lem:unit_norm} to the twisted setting and see that $\pm\sigma$ are exceptional units. The proof then follows that of Proposition \ref{prop:nt_square_clique}.
\end{proof}

\begin{example}\label{ex:n4c16}
	Consider \(K=\Q\) and \(F=L=\Q(\phi)\) with \(\eta=N_{L/F}(\phi)=\phi\).
	Since \(\Z[\phi]\) has a clique \(\clq\) of size \(4\), and \(\eta^2\) is an exceptional unit, we obtain by Proposition~\ref{prop:nt_square_clique_twisted} and Lemma~\ref{lem:mat_embedding_twist} a clique of size \(16=2^4\) in \(\textup{Mat}_{4}(\Z)\), which achieves the upper bound from Corollary~\ref{cor:clique_bound}. 
	As a clique in \(\textup{Mat}_2(\Z[\phi])\) it equals
	\[\Big\{\begin{pmatrix} x & y\phi \\ \sigma(y) & \sigma(x) \end{pmatrix} \Big|\, x,y\in \clq\Big\}.\]
\end{example}

\begin{example}\label{ex:n12c324}
	Let \(K=\Q\), \(F=\Q(\phi)\) and \(L=F(\eta_7)\). 
	The field \(L\) contains an exceptional clique of size \(18\).
	With \(\eta=\phi\) we obtain a clique of size \(18^2=324\) in dimension \(12\).
\end{example}

% \begin{proposition}\label{prop:square4}
	% Let \(p\geq 5\) be prime and \(n=(p-1)/2\). Then \(\textup{Mat}_{n^2}(\Z)\) contains an exceptional clique of size \(4n^2\).
	% \end{proposition}
% \begin{proof}
	% Let \(\eta=\zeta_p+\zeta_p^{-1}\), let \(G=\{\sigma_1,\dotsc,\sigma_n\}\) be the Galois group of \(\Q(\eta)\), and consider for \(x\in\Z[\eta]\) the \(\Z[\eta]\)-valued matrices
	% \[ 
	%   A_x = \begin{pmatrix} \sigma_1(x) & \\ & \ddots \\ & & \sigma_n(x)  \end{pmatrix} \quad\textup{and}\quad 
	%   B_x = \begin{pmatrix} 0 & & & \eta \sigma_n(x)  \\ \sigma_1(x) & \ddots & \\ & \ddots & 0 &  \\  & & \sigma_{n-1}(x) & 0 \end{pmatrix}.
	% \]
	% For all \(w,x,y,z\in\Z[\eta]\) it follows that
	% \[\det((A_w+B_x)-(A_y+B_z)) = \det(A_w-A_y)\pm\det(B_x-B_z) = N(w-y) \pm \eta N(x-z).\] 
	% Note that \(a+b\eta\) is a unit for all \(a,b\in\{-1,0,1\}\) unless \(a=b=0\).
	% By Theorem~\ref{thm:nt_solution} there is an exceptional clique \(\clq\subseteq\Z[\eta]\) of size \(2n\), so we may take \(\{ A_x + B_y \,|\, x,y\in \clq \}\subseteq\textup{Mat}_{n}(\Z[\eta])\hookrightarrow\textup{Mat}_{n^2}(\Z)\) as exceptional clique.
	% \end{proof}

\section{Cliques from coding theory}\label{sec:coding}

While Theorem~\ref{th:main} allows one to produce cliques in large dimension from cliques in small dimension, this construction cannot be applied inductively because the commutativity assumption is lost in the process, which is a problem Theorem~\ref{thm:clique_doubling} also suffers from.
In this section we describe an inductive exceptional clique construction using coding theory that does not depend on a commutativity assumption.

\begin{proposition}\label{prop:coding_inductive_step}
	Suppose \(\nrg\) is a ring and \(\clq\subseteq \nrg\) is an exceptional clique. 
	Suppose \(V\subseteq \clq^{2t}\) is a code where for any two distinct words \((v_1,\dotsc,v_{2t}),(w_1,\dotsc,w_{2t})\in V\) the set \(\{i \,|\, v_i=w_i\}\) is non-empty and consists of only odd or only even numbers.
	Then \(\textup{Mat}_{t}(\nrg)\) has an exceptional clique of size \(\#V\).
\end{proposition}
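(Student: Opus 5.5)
We map each codeword to a matrix built from a ``diagonal'' part and a ``cyclic shift'' part, so that the difference of two such matrices is invertible by a block-triangular argument. Concretely, for \(v=(v_1,\dotsc,v_{2t})\in V\) we define \(M_v\in\textup{Mat}_t(\nrg)\) by placing the odd-indexed coordinates \(v_1,v_3,\dotsc,v_{2t-1}\) on the main diagonal, and the even-indexed coordinates \(v_2,v_4,\dotsc,v_{2t}\) on the cyclic superdiagonal. That is, \((M_v)_{i,i}=v_{2i-1}\) and \((M_v)_{i,i+1}=v_{2i}\) with indices mod \(t\), so \((M_v)_{t,1}=v_{2t}\). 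All other entries are \(0\). The exceptional clique will be \(\{M_v \mid v\in V\}\).

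Take distinct \(v,w\in V\) and put \(D=M_v-M_w\). The diagonal entries of \(D\) are \(d_i=v_{2i-1}-w_{2i-1}\) and the cyclic superdiagonal entries are \(e_i=v_{2i}-w_{2i}\). Since \(\clq\) is an exceptional clique, each \(d_i\) and \(e_i\) is either \(0\) or a unit. By hypothesis the agreement set is non-empty and entirely odd or entirely even.

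\emph{Case of odd agreements.} Every \(e_i\) is a unit, and at least one \(d_j\) is \(0\). Cyclically relabel the rows and columns simultaneously; this is conjugation by a permutation matrix, so invertibility is unchanged. We may then assume \(d_t=0\). After cutting the cycle at that position, \(D\) should become a monomial-times-triangular matrix. Concretely, \(D=(\text{unit-diagonal part of superdiagonal})\cdot(\text{cyclic shift})\) plus something nilpotent. The cleanest route is to exhibit \(D=PT\), where \(P\) is the cyclic permutation matrix scaled by the units \(e_i\) and \(T\) is unipotent-like. Since \(P\) is invertible, \(D\) is invertible if and only if \(P^{-1}D\) is. That matrix has entries \(1\) on the diagonal, \(e_i^{-1}d_i\)-type entries on one off-diagonal, and the entry coming from \(d_t=0\) vanishes. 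Hence it is triangular (after the cut) with unit diagonal, and so it is invertible.

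\emph{Case of even agreements.} Every \(d_i\) is a unit and some \(e_j=0\). Write \(D=\Delta(1+\Delta^{-1}N)\), where \(\Delta=\operatorname{diag}(d_i)\) and \(N\) is the superdiagonal part. Because some \(e_j=0\), the cyclic chain is broken, so \(\Delta^{-1}N\) is strictly upper triangular after a cyclic relabeling and therefore nilpotent. Then \(1+\Delta^{-1}N\) is invertible with inverse \(\sum_k(-\Delta^{-1}N)^k\), and so \(D\) is invertible.

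This also gives injectivity of \(v\mapsto M_v\): distinct codewords give \(D\neq 0\), which is even invertible when \(t\geq 1\). So the clique has size \(\#V\).

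The main obstacle is the odd case, where \(\nrg\) is non-commutative and the factorization through the weighted cyclic shift has to be written carefully with left and right factors in the correct order. The plan there is to verify directly that the weighted shift \(P\) (entries \(e_i\) at positions \((i,i+1)\)) has an explicit inverse with entries \(e_i^{-1}\) at \((i+1,i)\). Next, check that \(DP^{-1}\) equals \(1\) plus a matrix supported on a single off-diagonal chain, with entries \(d_ie_{i-1}^{-1}\)-type terms, which the zero \(d_t\) breaks. That matrix is then \(1\) plus a nilpotent, and the same Neumann-series argument as in the even case applies.
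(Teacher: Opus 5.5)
Your proof is correct, and it takes a somewhat different route from the paper's. The matrices are the same up to transposition: the paper places $w_{2j}$ on the cyclic \emph{sub}diagonal, with $w_{2t}$ in the corner $(1,t)$.

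The paper avoids a case split. It observes that right multiplication by the cyclic permutation matrix, followed by transposition, acts on codewords as the cyclic shift by one position. So any agreement position can be moved to position $2t$. The agreement set is then entirely even and the corner entry of the difference vanishes. What remains is lower bidiagonal with unit diagonal, hence invertible.

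You use only parity-preserving relabelings, namely conjugation by a cyclic permutation matrix, which is a genuine automorphism of $\textup{Mat}_t(R)$. So you must treat the two parities separately:
\begin{enumerate}
\item Your even case, $D=\Delta(1+\Delta^{-1}N)$, is essentially the paper's triangularity argument.
\item In the odd case you replace the parity-switching symmetry by factoring out the invertible weighted shift $P$. This leaves $P^{-1}D=1+P^{-1}\Delta$, where $P^{-1}\Delta$ is supported on a cyclic subdiagonal chain that the zero $d_j$ breaks, so it is nilpotent.
\end{enumerate}

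The paper's route is shorter, but a shift by an odd amount involves a transpose. For non-commutative $R$, transposition need not preserve units, and this is the case actually used in Theorem~\ref{thm:coding}, where $R$ is itself a matrix ring. The paper's reduction is still sound there only because the matrix it lands on is triangular with unit diagonal, a shape that survives transposition. Your factorization sidesteps this and is transparently valid over any ring.

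One slip in your heuristic sentence: $D$ is not ``$P$ plus something nilpotent'', since $\Delta$ is not nilpotent. It is $P$ times a unipotent matrix, exactly as in the precise version $D=P(1+P^{-1}\Delta)$ you give afterwards.
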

\begin{proof}
	Write \(s=2t\).
	To each \(w=(w_1,\dotsc,w_s)\in \clq^s\) we associate the matrix
	\[A_w=\begin{pmatrix} w_1 & & & & w_s\\ w_2 & w_3 \\ & w_4 & \smash{\ddots} \\ & & \smash{\ddots} & w_{s-3} \\ & & & w_{s-2} & w_{s-1} \end{pmatrix} \in \textup{Mat}_{t}(\nrg).\]
	The \(\nrg\)-linear isomorphism \(\sigma:\textup{Mat}_t(\nrg)\to\textup{Mat}_t(\nrg)\) obtained by multiplying on the right by the permutation matrix of \((1\ 2\ \dotsm\ t)\) and then taking the transpose preserves units and induces the map \((w_1,\dotsc,w_s)\mapsto(w_s,w_1,\dotsc,w_{s-1})\) on \(\clq^s\). 
	%Note that \(\sigma^s=\id\), and thus the group \(\Z/s\Z\) acts on both \(\textup{Mat}_n(\Z)\) and \(C\), and this action is transitive on the coordinates of \(C\). 
	
	Suppose \(v,w\in V\) are distinct words. 
	We will show that \(A_v-A_w\) is invertible.
	There exists some \(i\) such that \(v_i=w_i\), and by the action of \(\langle\sigma\rangle\) we may assume without loss of generality that \(i=s\).
	Then \(A_v-A_w\) is a lower triangular matrix, and by assumption on the code it has invertible diagonal entries, hence it is invertible even when \(\nrg\) is non-commutative.
	%If \(k\) is odd, we may extend the code to \(C'\subseteq \clq^{k+1}\) by sending \((w_1,\dotsc,w_k)\in C\) to \((w_1,\dotsc,w_k,w_{k-1})\) and apply the previous case.  
\end{proof}

We will apply this proposition to the following linear code.

\begin{example}\label{ex:RS_def}
	Let \(q\) be a prime power and write \(\mathbb{P}=\mathbb{P}^1(\F_q)\) for the set of \(1\)-dimensional \(\F_q\)-submodules of some fixed \(2\)-dimensional module \(V\).
	Consider the natural map \(V\to\bigoplus_{L\in\mathbb{P}} (V/L)\) and note that it is injective.
	Its image is the \emph{Reed--Solomon code} of dimension \(2\) and length \(q+1\).
	The images of any two distinct \(x,y\in V\) are the same at coordinate \(L\) if and only if \(L=\F_q\cdot(x-y)\).
	Hence any two distinct words are equal in exactly one place.
\end{example}

\begin{corollary}\label{cor:coding_inductive_step}
	If \(\nrg\) is a ring and \(\clq\subseteq \nrg\) is an exceptional clique of prime-power size \(q\), then for every integer \(t\geq (q+1)/2\) the ring \(\textup{Mat}_t(\nrg)\) has an exceptional clique of size \(q^2\).
\end{corollary}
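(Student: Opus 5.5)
We combine Proposition~\ref{prop:coding_inductive_step} with the Reed--Solomon code of Example~\ref{ex:RS_def}. Since \(\#\clq=q\), we fix a bijection \(\F_q\to\clq\), so words over \(\F_q\) can be read as words over \(\clq\). Equal coordinates stay equal under this relabelling and distinct ones stay distinct, so the agreement pattern of two words is unchanged.

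First treat the case \(t=(q+1)/2\) with \(q\) odd. The Reed--Solomon code has length \(q+1=2t\) and \(q^2\) words, and any two distinct words agree in exactly one coordinate. So the set of agreeing coordinates is a singleton: it is non-empty and trivially consists only of odd or only of even indices. Proposition~\ref{prop:coding_inductive_step} then gives a clique of size \(q^2\) in \(\textup{Mat}_t(\nrg)\).

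For \(q\) even, set \(t_0=q/2+1\) and pad each Reed--Solomon word to length \(q+2=2t_0\). Take a word \((w_1,\dotsc,w_{q+1})\) and append a copy of a coordinate of the same parity as the new position \(q+2\), for instance \(w_{q+2}:=w_q\). If two words agreed only at coordinate \(q\), they now agree exactly at \(\{q,q+2\}\), which are both even indices. Otherwise they agree at a single coordinate \(i\neq q\) and disagree at both \(q\) and \(q+2\). In either case the agreement set is non-empty and of a single parity, so we get a clique of size \(q^2\) in \(\textup{Mat}_{t_0}(\nrg)\).

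For larger \(t\), I do not want to re-pad the code to length \(2t\): adding coordinates while preserving the parity condition is fiddly. Instead, write \(t=t_0+(t-t_0)\) with \(t_0=\lceil (q+1)/2\rceil\). Applying the previous step to \(\clq\) viewed inside \(\textup{Mat}_{t-t_0}(\nrg)\) does not work directly, since that needs \(t-t_0\geq t_0\) as well. The cleaner route is to iterate padding: append pairs of coordinates \((w_{j-2},w_{j-1})\) repeating the previous two. Each appended pair copies one odd-indexed and one even-indexed original coordinate into new positions of the same parities. Hence agreement sets only grow within the parity class they already had, and they stay non-empty and single-parity. This yields codes of every even length \(2t\geq q+1\) with \(q^2\) words, and Proposition~\ref{prop:coding_inductive_step} finishes the proof.

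The main point to verify is this parity bookkeeping under padding. The index \(j-2\) has the same parity as \(j\), and \(j-1\) the same parity as \(j+1\), so the copies land in matching parity classes; this is the check on which the argument rests.
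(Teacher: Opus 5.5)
Your proof is correct and follows the paper's argument: you take the Reed--Solomon code of length \(q+1\), pad it to length \(2t\) by copying coordinates of matching parity, and apply Proposition~\ref{prop:coding_inductive_step}. The only difference is cosmetic: the paper pads in a single step, setting \(w_i'=w_1\) at every new odd position \(i\) and \(w_i'=w_2\) at every new even one, which treats odd and even \(q\) uniformly and makes your separate even-\(q\) step and the abandoned \(t=t_0+(t-t_0)\) detour unnecessary.
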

\begin{proof}
	Following the example above, choose bijections \(\mathbb{P}\to\{1,\dotsc,q+1\}\) and \(V/L\to \clq\) for all \(L\in\mathbb{P}\), and let \(W\subseteq \clq^{q+1}\) be the corresponding Reed--Solomon code.
	We embed \(W\) in \(\clq^{2t}\) by \((w_1,\dotsc,w_{q+1})\mapsto (w_1,\dotsc,w_{q+1},w_{q+2}',\dotsc,w_{2t}')\), where \(w_i'=w_1\) or \(w_i'=w_2\) depending on whether \(i\) is odd or even respectively. The result follows from Proposition~\ref{prop:coding_inductive_step}.
\end{proof}

\begin{remark}
	The Reed--Solomon code is constructed using lines of the affine plane over $\F_q$. 
	Abstractions of the affine plane have been studied where the plane is just a set of points \(P\) together with a set of subsets \(L\) of \(P\), the lines, satisfying various axioms \cite{AffinePlanes}.
	One may for each class of parallel lines \([\ell]\) choose a bijection between \([\ell]\) and a clique and proceed as in Corollary~\ref{cor:coding_inductive_step}.
	However, it is suspected that all finite affine planes have prime-power order \cite[p.\ 144]{AffinePlanes}, so we obtain no relaxation of the conditions to Corollary~\ref{cor:coding_inductive_step}.
\end{remark}

By recursive application of Corollary~\ref{cor:coding_inductive_step} we may now prove the following theorem.

\begin{theorem}\label{thm:coding}
	Fix \(d\geq1\) and a prime power \(q\), and suppose \(\textup{Mat}_d(\Z)\) contains an exceptional clique of size \(q\).
	Then there exist infinitely many \(n\in\Z_{\geq 0}\) such that \(\textup{Mat}_n(\Z)\) contains an exceptional clique of size \(\delta_{d,q}\cdot n \log n + O(n\log\log n)\), where 
	\[\delta_{d,q} = \frac{q}{d\log q}\cdot \prod_{i=0}^\infty \Big(1+\frac{\varepsilon_q}{q^{2^i}}\Big)^{-1} \quad\textup{and}\quad \varepsilon_q = \begin{cases} 2 & \textup{if \(q\) is even}\\ 1 & \textup{if \(q\) is odd}\end{cases}.\]
\end{theorem}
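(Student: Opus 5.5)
We build the cliques by iterating Corollary~\ref{cor:coding_inductive_step}, starting from the given clique of size $q_0:=q$ in $\textup{Mat}_{d}(\Z)$. If $\textup{Mat}_{n_k}(\Z)$ contains a clique of size $q_k$ and $q_k$ is a prime power, the corollary applied with $R=\textup{Mat}_{n_k}(\Z)$ and $t_k=\lceil (q_k+1)/2\rceil$ gives a clique of size $q_{k+1}=q_k^2$ in $\textup{Mat}_{t_k}(\textup{Mat}_{n_k}(\Z))\cong\textup{Mat}_{n_{k+1}}(\Z)$, where $n_{k+1}=t_kn_k$. Since $q_k=q^{2^k}$ is again a power of the same prime, the recursion runs forever. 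If $q$ is odd, $q_k$ is odd and $t_k=(q_k+1)/2$. If $q$ is even, $t_k=(q_k+2)/2$. In both cases $t_k=\tfrac12 q_k(1+\varepsilon_q/q_k)$, with $\varepsilon_q$ as in the statement. The infinitely many $n$ in the theorem are $n=n_k$, $k\ge 0$.

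Next we compute the two sequences in closed form. Clearly $q_k=q^{2^k}$. Using $\prod_{i<k}q^{2^i}=q^{2^k-1}$,
\[
n_k=d\prod_{i=0}^{k-1}t_i=d\,2^{-k}q^{2^k-1}\prod_{i=0}^{k-1}\Big(1+\frac{\varepsilon_q}{q^{2^i}}\Big).
\]
Set $P=\prod_{i\ge0}(1+\varepsilon_q q^{-2^i})$. This product converges, and its tail after $k$ factors is $1+O(q^{-2^k})=1+O(1/q_k)$. Hence $q_k=q\,n_k\,2^k/(dP)\cdot(1+O(q^{-2^k}))$.

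It remains to express $2^k$ in terms of $n_k$. Taking logarithms gives $\log n_k=2^k\log q+O(k)$, so $2^k=\log n_k/\log q+O(k)$. Moreover $k=O(\log\log n_k)$, since $2^k\asymp\log n_k$. Substituting,
\[
q_k=\frac{q}{dP\log q}\,n_k\log n_k+O(n_k\log\log n_k)=\delta_{d,q}\,n_k\log n_k+O(n_k\log\log n_k).
\]
The multiplicative error $O(q^{-2^k})=O(1/n_k^{c})$ is absorbed into this error term.

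The construction itself is immediate from Corollary~\ref{cor:coding_inductive_step}. The only delicate points are bookkeeping. First, $t_k$ must be an integer at least $(q_k+1)/2$, which is what forces the parity distinction giving $\varepsilon_q$. Second, the $O(k)$ error in $\log n_k$, which comes from the $2^{-k}$ factor and the finite product, must be converted into the $O(n\log\log n)$ term rather than something larger. I expect this error analysis, and confirming that it exactly reproduces the constant $\delta_{d,q}$, to be the main place where care is needed.
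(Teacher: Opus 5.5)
Your proposal is correct and follows essentially the same route as the paper. It iterates Corollary~\ref{cor:coding_inductive_step} with $t_k=(q_k+\varepsilon_q)/2$, writes $n_k=d\,2^{-k}q^{2^k-1}\prod_{i<k}(1+\varepsilon_q q^{-2^i})$ in closed form, and bounds the tail of the infinite product by $1+O(q^{-2^k})$; it then uses $2^k=\log_q n_k+O(k)$ with $k=O(\log\log n_k)$ to reach $\delta_{d,q}\,n_k\log n_k+O(n_k\log\log n_k)$, and you also spell out two points the paper leaves implicit, namely that $t_k=\lceil (q_k+1)/2\rceil$ gives $\varepsilon_q$ by parity and that $\textup{Mat}_{t}(\textup{Mat}_{n_k}(\Z))\cong\textup{Mat}_{tn_k}(\Z)$.
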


Note that we may trivially choose \(d=1\) and \(q=2\) to obtain cliques with a super-linear rate.
As shown by Example~\ref{ex:n4c16}, we may also take \(d=4\) and \(q=16\) for a better constant factor \(\delta_{4,16}\approx 1.27\) compared to \(\delta_{1,2}\approx0.85\). 

\begin{proof}
	Let \(R_0=\textup{Mat}_{n_0}(\Z)\) for \(n_0=d\), which contains an exceptional clique of size \(s_0=q\).
	We may inductively construct a clique of size \(s_{k+1}=s_k^2\) in \(R_{k+1}=\textup{Mat}_{n_{k+1}}(\Z)\) for \(n_{k+1}=n_k\cdot (s_k+\varepsilon_q)/2\) using Corollary~\ref{cor:coding_inductive_step}.
	Here
	\[s_{k}=q^{2^{k}}\quad\textup{and}\quad n_{k}=d\cdot2^{-k}\cdot \prod_{i=0}^{k-1}\big(q^{2^i}+\varepsilon_q\big) = d \cdot 2^{-k}\cdot q^{2^k-1} \cdot \prod_{i=0}^{k-1} \Big(1+\frac{\varepsilon_q}{q^{2^i}}\Big). \]
	Note that
	\[\prod_{i=0}^{k-1} \Big(1+\frac{\varepsilon_q}{q^{2^i}}\Big)\leq \exp\Big( \sum_{i=0}^{k-1}\frac{\varepsilon_q}{q^{2^i}} \Big) \leq \exp\Big(\frac{\varepsilon_q\cdot q}{q-1}\Big),\]
	so the product converges as \(k\) tends to infinity.
	We similarly conclude that
	\begin{align*} 
		\prod_{i=k}^\infty \Big(1+\frac{\varepsilon_q}{q^{2^i}}\Big) 
		&\leq \exp\Big(\sum_{i=k}^\infty \frac{\varepsilon_q}{q^{2^i}} \Big) \leq \exp\Big( \frac{q}{q-1} \cdot \frac{\varepsilon_q}{q^{2^k}} \Big) \\ 
		&\leq 1 + \frac{q}{q-1} \cdot \frac{\varepsilon_q}{q^{2^k}} \cdot\exp\Big(\frac{q}{q-1} \cdot \frac{\varepsilon_q}{q^{2^k}}\Big) = 1+O\big(q^{-2^k}\big) = 1 + O(n_k^{-1}).
	\end{align*}
	We have
	\[2^k = \log_q n_k + O(k) = \log_q n_k + O(\log\log n_k),\]
	and similarly
	\begin{align*}
		s_k &=  n_k\cdot  2^k \cdot \delta_{d,q} \cdot (\log q) \cdot \prod_{i=k}^{\infty}\Big(1+\frac{\varepsilon_q}{q^{2^i}}\Big) \\ 
		&= \delta_{d,q} \cdot n_k \big(\log n_k + O(\log\log n_k) \big) \cdot \big(1+O(n_k^{-1})\big) \\
		&= \delta_{d,q} \cdot n_k \log n_k + O(n_k\log\log n_k),
	\end{align*}
	as was to be shown.
\end{proof}

One could imagine a stronger result can be obtained if we apply Proposition~\ref{prop:coding_inductive_step} to a better code. However, it turns out the Reed--Solomon code is optimal in the following sense.

\begin{proposition}
	For \(s\geq 1\) and \(\clq\) a finite set of cardinality \(\#\clq\geq 2\), let \(V\subseteq \clq^s\) be a code such that for any two distinct \(v,w\in V\) the set \(\{i\,|\, v_i=w_i\}\) is non-empty and consists of only odd or only even numbers. Then \(\# V\leq \# \clq^2\) and \(s \geq (\# V-1)/(\#\clq-1)\).
\end{proposition}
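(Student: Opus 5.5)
Write $c=\#\clq$ and $N=\#V$. Both bounds come from looking at the code through a few coordinates at a time, using the only structure available: any two distinct codewords agree somewhere, and all of their agreements have the same parity.

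\textbf{Bound $N\leq c^2$.} Fix one odd coordinate $i$ and one even coordinate $j$; this needs $s\geq 2$. If $s=1$, then any two distinct words would have to agree at the single coordinate, which is impossible, so $N\leq 1\leq c^2$. Consider the projection $V\to\clq^2$, $v\mapsto(v_i,v_j)$. I claim it is injective. If two distinct words agreed at both $i$ and $j$, their agreement set would contain an odd and an even index, contradicting the hypothesis. Hence $N\leq c^2$.

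\textbf{Bound $s\geq (N-1)/(c-1)$.} I use a counting argument centred at a fixed word. Fix $v\in V$; the other $N-1$ words each agree with $v$ in at least one coordinate. For a coordinate $i$, let $A_i=\{w\in V\setminus\{v\} : w_i=v_i\}$. The sets $A_i$ cover $V\setminus\{v\}$, so
\[
N-1\leq \sum_i \#A_i .
\]
The task is then to show $\#A_i\leq c-1$ for every $i$, which gives $N-1\leq s(c-1)$ as required.

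Fix $i$ and an index $j\neq i$ of the opposite parity. I claim the map $A_i\to\clq\setminus\{v_j\}$, $w\mapsto w_j$, is well defined and injective.
\begin{enumerate}
\item Well defined: each $w\in A_i$ agrees with $v$ at $i$, so it cannot also agree with $v$ at $j$; hence $w_j\neq v_j$.
\item Injective: two distinct $w,w'\in A_i$ both have $i$-th entry $v_i$, so they agree at $i$. If also $w_j=w'_j$, their agreement set would contain indices of both parities, a contradiction.
\end{enumerate}
So $\#A_i\leq c-1$.

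\textbf{Main obstacle.} The only real care needed is ensuring that an index $j$ of the opposite parity to $i$ exists, which holds exactly when $s\geq 2$. If $s=1$, the hypothesis forces $N\leq 1$, and then $s\geq 0=(N-1)/(c-1)$ holds trivially (with $N-1=0$ also covering the empty code). The assumption $c\geq 2$ makes the division by $c-1$ legitimate.
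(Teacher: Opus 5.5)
Your proof is correct and is essentially the paper's argument. The paper also uses that \(\pi_j\) is injective on each fibre of \(\pi_i\) when \(i\) and \(j\) have opposite parity. It gets \(\#V\leq\#\clq^2\) by summing over the fibres of \(\pi_1\), which is equivalent to your injection \(V\to\clq^2\), and it gets \(\#V-1\leq s(\#\clq-1)\) by covering \(V\setminus\{v\}\) with the sets \(\pi_i^{-1}\{v_i\}\setminus\{v\}\), exactly as you do with the sets \(A_i\).
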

\begin{proof}
	If \(s = 1\), then \(\# V\leq 1\) and we are done. Suppose \(s>1\). 
	Let \(\pi_i:V\to \clq\) be the projection onto the \(i\)-th coordinate. If \(i+j \equiv 1\ (\textup{mod }2)\) and \(x\in \clq\), then the restriction of \(\pi_i\) to \(\pi_j^{-1}\{x\} \to \clq\) is injective.
	Hence \(\#\pi_i^{-1}\{x\}\leq \#\clq\) for every \(i\) and \(x\in \clq\). It follows that \(\# V= \#(\pi_1^{-1} \clq) \leq \#\clq^2\). Now let \(v\in V\). Then
	\[\#V-1=\# (V\setminus \{v\}) \leq \sum_{i} \# \big((\pi_i^{-1}\{v_i\} \big)\setminus\{v\}\big) \leq s(\# \clq-1),\]
	as was to be shown.
\end{proof}

\section{Cliques from graph theory}\label{sec:graph}

Our best two clique constructions, using cyclic algebras and using codes, can only produce large cliques in composite dimension. 
%In particular, we have obtained quadratic rate only for even dimension at best (see Remark~\ref{rem:goldbach}). 
Complementarily to this, in this section we will construct exceptional cliques in \(\textup{Mat}_n(\Z)\) using graphs when \(n\) is prime.

We first give some basic definitions from graph theory that are slightly modified for our purposes.

\begin{definition}
	A \emph{graph} is a symmetric \(\{0,1\}\)-valued square matrix with \(0\) diagonal. 
	A \emph{signed graph} is a symmetric \(\{-1,0,1\}\)-valued square matrix with \(0\) diagonal. 
	Let \(G\) be an \(n\times n\) signed graph.
	A \emph{vertex} of \(G\) is an integer in \(\{1,\dotsc,n\}\) and an \emph{edge} is a pair \(\{v,w\}\) of vertices such that \(G_{v,w}\neq 0\).
	For each vertex \(v\), the \emph{degree} of \(v\) in \(G\) is the number of non-zero entries in row \(v\) of \(G\). 
	The \emph{sign} of \(G\) is the product of all its non-zero entries.
	The \emph{Laplacian} \(\mathcal{L}(G)\) of \(G\) is \(G+D\), where \(D\) is the diagonal matrix such that all row-sums of \(G+D\) are \(0\).
	We say \(G\) is \emph{connected} if for every two vertices \(v,w\) of \(G\) there exists a sequence of vertices \((u_1,\dotsc,u_m)\) such that \((u_1,u_m)=(v,w)\) and \(G_{u_i,u_{i+1}}\neq 0\) for all \(1\leq i <m\).
	A \emph{subgraph} of \(G\) is an \(n\times n\) signed graph \(H\) such that for all edges \(\{v,w\}\) of \(H\) we have \(H_{v,w}=G_{v,w}\).
	We say \(G\) is a \emph{tree} if it is connected and has no connected subgraphs but \(G\).
	A \emph{spanning tree} of \(G\) is a subgraph which is a tree. 
	%Let \(G=(V,E)\) be a graph. A \emph{spanning tree} of \(G\) is a subgraph \(G'=(V,E')\) of \(G\) that is connected such that no subgraph of \(G'\) is connected.
	%The \emph{Laplacian} \(\mathcal{L}(G)\) of \(G\) is the symmetric matrix \((m_{v,w})_{v,w\in V}\) where 
	%\[ m_{v,w} = \begin{cases} 1 & \textup{if } v\neq w\textup{ and }\{v,w\}\in E \\ 0 & \textup{if }v\neq w\textup{ and }\{v,w\}\not\in E \\ -\#\{u \,|\, \{u,v\}\in E\} & \textup{if } v=w, \end{cases} \]
\end{definition}

The following theorem allows us to construct exceptional cliques in \(\textup{Mat}_n(\Z)\) from graphs.

\begin{theorem}[Matrix tree theorem \cite{MatrixTree}]\label{thm:matrix_tree}
	Let \(G\) be a signed graph. Then for each vertex \(v\) the value of \(\det(\mathcal{L}(G)^v)\), where \(-^{v}\) indicates the submatrix obtained by removing the \(v\)-th row and column, equals the number of spanning trees of \(G\) of positive sign minus the number of spanning trees of \(G\) of negative sign. \qed
\end{theorem}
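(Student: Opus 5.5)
The plan is to factor the Laplacian through an incidence matrix and expand the reduced determinant with the Cauchy--Binet formula, so that each spanning tree contributes exactly one signed term. The only delicate point is which sign each tree receives; I address it in the third paragraph, after the computation.

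\emph{Step 1: factorization.} Let \(G\) be an \(n\times n\) signed graph with edge set \(E\). For each edge \(e=\{v,w\}\), fix an orientation \(v\to w\), write \(s_e=G_{v,w}\in\{\pm1\}\), and put \(b_e=e_v-e_w\in\Z^n\). By definition of \(\mathcal{L}(G)=G+D\), the diagonal entries are \(D_{v,v}=-\sum_w G_{v,w}\). A direct check on the entries gives
\[ \mathcal{L}(G)=-\sum_{e\in E} s_e\, b_e b_e^{T} = -B S B^{T}, \]
where \(B\) is the \(n\times\#E\) oriented incidence matrix and \(S=\mathrm{diag}(s_e)\). Deleting row and column \(v\) gives \(\mathcal{L}(G)^v=-B^vS(B^v)^T\), where \(B^v\) is \(B\) with row \(v\) removed.

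\emph{Step 2: Cauchy--Binet.} Expand over \((n-1)\)-subsets \(T\subseteq E\):
\[ \det(\mathcal{L}(G)^v)=(-1)^{n-1}\sum_{T}\det(B^v_T)^2\prod_{e\in T}s_e . \]
The standard incidence-matrix lemma then finishes the computation: \(\det(B^v_T)=\pm1\) if \(T\) is the edge set of a spanning tree, and \(0\) otherwise. To prove it, if \(T\) contains a cycle, the signed sum of the columns of that cycle vanishes. If \(T\) is acyclic with \(n-1\) edges, it is a spanning tree; removing a leaf \(\neq v\) together with its pendant edge and expanding along that row gives an induction. The subsets \(T\) with nonzero contribution are exactly the spanning trees in the paper's sense, namely subgraphs that are trees; connectedness with no proper connected spanning subgraph is equivalent to being acyclic with \(n-1\) edges.

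\emph{Step 3: the sign.} The main obstacle is matching signs with the statement, and I expect it to be the hardest part. Read literally as the product of all nonzero entries of the symmetric matrix, the sign of a tree is always \(+1\), since every edge entry occurs twice. The definition only carries information if it is read with each edge counted once. The computation attaches to a tree \(T\) the quantity \((-1)^{n-1}\prod_{e\in T}s_e=\prod_{e\in T}(-G_e)\), the product of the off-diagonal entries of \(-\mathcal{L}(G)\) along \(T\), counted once per edge. I would therefore read the sign of a spanning tree as this product. That is the convention under which the displayed identity is exactly the statement, and Steps 1 and 2 then prove it verbatim.

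For the paper's later application, only the consequence \(\lvert\det(\mathcal{L}(G)^v)\rvert=1\) is used. This holds when the signed tree count is \(\pm1\), so it does not depend on the choice of global sign convention.
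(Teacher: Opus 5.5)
Your argument is correct, and it is the standard proof of the weighted matrix tree theorem. The paper gives no proof of its own, only the citation, so there is no competing route to compare. The factorization $\mathcal{L}(G)=-BSB^{T}$, the Cauchy--Binet expansion and the incidence lemma all check out for every choice of $v$. In the incidence lemma, a cycle gives a linear dependency among the columns. For a tree you strip a leaf different from $v$, which always exists since a tree on at least two vertices has two leaves.

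You should say more plainly that Step~3 corrects the statement rather than interpreting it. Take the evident intended meaning of the sign, $\prod_{e\in T}G_e$ with each edge counted once. Then the printed identity fails for every even $n$ with a nonzero signed count, because the paper's $\mathcal{L}(G)=G+D$ is minus the usual weighted Laplacian. For example, with $n=2$ and $G_{12}=1$ one gets $\mathcal{L}(G)^1=(-1)$, while the unique spanning tree is positive. Your computation shows that with this natural sign the correct form is $\det(\mathcal{L}(G)^v)=(-1)^{n-1}(\#T^{+}-\#T^{-})$. This is equivalent to your choice $\prod_{e\in T}(-G_e)$, and you should say so explicitly rather than present $\prod_{e\in T}(-G_e)$ as a reading of ``the product of the non-zero entries of $G$''.

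As you observe, the paper only uses $|\det(\mathcal{L}(G)^v)|=1$, so none of its later arguments is affected.
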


\begin{definition}
	We will refer to a set \(\mathcal{G}\) of graphs on the same vertex set as a \emph{clique of graphs} if for every  vertex \(v\) the set \(\{ \mathcal{L}(G)^v \,|\, G\in\mathcal{G} \}\) is an exceptional clique.
\end{definition}
Note that cliques of graphs induce exceptional cliques consisting of symmetric matrices.
We will use the following graphs to construct cliques.

\begin{example}\label{ex:graphs}
	Let \(p\) be an odd prime. We define the following graphs on the vertex set \(\F_p\).
	\[C = \vcenter{\hbox{\begin{tikzpicture}[scale=.5,vrtx/.style args = {#1/#2}{%
					circle, draw, thick, fill=white,
					minimum size=.1, label=#1:#2}]
				\draw (-1.5,-1.3) -- (-1,-.4) -- (0,0) -- (1,-.4) -- (1.5,-1.3);
				\draw[dashed] (1.5,-1.3) -- (1.5,-1.8);
				\draw[dashed] (-1.5,-1.3) -- (-1.5,-1.8);
				\draw (-.55,-2.9) -- (.55,-2.9);
				\draw[dashed] (-.55,-2.9) -- (-1,-2.7);
				\draw[dashed] (.55,-2.9) -- (1,-2.7);
				\node[shape=circle,fill=black, label=above:$\scriptstyle0$,scale=0.3] (0) at (0,0) {};
				\node[shape=circle,fill=black, label=above right:$\scriptstyle1$,scale=0.3] (1) at (1,-.4) {};
				\node[shape=circle,fill=black, label=above left:$\scriptstyle-1$,scale=0.3] (1) at (-1,-.4) {};
				\node[shape=circle,fill=black, label=left:$\scriptstyle-2$,scale=0.3] (1) at (-1.5,-1.3) {};
				\node[shape=circle,fill=black, label=right:$\scriptstyle2$,scale=0.3] (1) at (1.5,-1.3) {};
				\node[shape=circle,fill=black, label=below left:$\scriptstyle\tfrac{p+1}{2}$,scale=0.3] (1) at (-.55,-2.9) {};
				\node[shape=circle,fill=black, label=below right:$\scriptstyle\tfrac{p-1}{2}$,scale=0.3] (1) at (.55,-2.9) {};
	\end{tikzpicture}}} \quad\quad\quad\quad\quad
	P_0 = \vcenter{\hbox{\begin{tikzpicture}[scale=.5,vrtx/.style args = {#1/#2}{%
					circle, draw, thick, fill=white,
					minimum size=.1, label=#1:#2}]
				\draw (-1,-.4) -- (1,-.4);
				\draw (-1.5,-1.3) -- (1.5,-1.3);
				\draw[dotted] (1.5,-1.3) -- (1.5,-1.7);
				\draw[dotted] (-1.5,-1.3) -- (-1.5,-1.7);
				\draw (-.55,-2.9) -- (.55,-2.9);
				\draw[dotted] (-.55,-2.9) -- (-1,-2.7);
				\draw[dotted] (.55,-2.9) -- (1,-2.7);
				\node[shape=circle,fill=black, label=above:$\scriptstyle0$,scale=0.3] (0) at (0,0) {};
				\node[shape=circle,fill=black, label=above right:$\scriptstyle1$,scale=0.3] (1) at (1,-.4) {};
				\node[shape=circle,fill=black, label=above left:$\scriptstyle-1$,scale=0.3] (1) at (-1,-.4) {};
				\node[shape=circle,fill=black, label=left:$\scriptstyle-2$,scale=0.3] (1) at (-1.5,-1.3) {};
				\node[shape=circle,fill=black, label=right:$\scriptstyle2$,scale=0.3] (1) at (1.5,-1.3) {};
				\node[shape=circle,fill=black, label=below left:$\scriptstyle\tfrac{p+1}{2}$,scale=0.3] (1) at (-.55,-2.9) {};
				\node[shape=circle,fill=black, label=below right:$\scriptstyle\tfrac{p-1}{2}$,scale=0.3] (1) at (.55,-2.9) {};
	\end{tikzpicture}}}
	\]
	We let \(\F_p\) act naturally on the vertex set by addition, giving a rotation \(P_a\) of \(P_0\) for each \(a\in\F_p\).
	We similarly define \(C^\star\) and \(P_a^\star\) where the vertex set is the projective line \(\mathbb{P}^1(\F_p)=\F_p\cup\{\star\}\), and~\(\star\) has degree 0 in \(C^\star\) and is connected to \(-1\) in \(P_0^\star\).
\end{example}

\begin{proposition}
	For each odd prime \(p\), the set \(\{P_0,\dotsc,P_{p-1},C\}\) is a clique of graphs.
\end{proposition}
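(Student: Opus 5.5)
The plan is to reduce everything to the Matrix tree theorem (Theorem~\ref{thm:matrix_tree}). Reading off the pictures in Example~\ref{ex:graphs}, I take \(C\) to be the \(p\)-cycle with edges \(\{x,x+1\}\), and \(P_a\) to be the matching with edges \(\{x,2a-x\}\) for \(x\neq a\); so \(a\) is isolated in \(P_a\).

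The first observation is that \(\mathcal{L}\) is additive. For graphs \(G,H\) on the same vertex set, \(\mathcal{L}(G)-\mathcal{L}(H)=\mathcal{L}(G-H)\), where \(G-H\) is a signed graph: edges common to \(G\) and \(H\) cancel, edges only in \(G\) get sign \(+1\), and edges only in \(H\) get sign \(-1\). The same holds after deleting row and column \(v\). So \(\mathcal{L}(G)^v-\mathcal{L}(H)^v\) is an integer matrix, and it is invertible over \(\Z\) if and only if \(\det\mathcal{L}(G-H)^v=\pm1\). By Theorem~\ref{thm:matrix_tree} this determinant is the signed count of spanning trees of \(G-H\), which does not depend on \(v\). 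Hence it suffices to show, for each pair of distinct graphs in \(\{P_0,\dotsc,P_{p-1},C\}\), that the signed spanning-tree count of their difference is \(\pm1\).

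For two matchings \(P_a,P_b\) with \(a\neq b\), the edge sets are disjoint, since edge sums \(2a\neq 2b\) in \(\F_p\). Every vertex has degree \(2\) in \(P_a-P_b\), except \(a\) and \(b\), which have degree \(1\). Alternating the two reflections \(x\mapsto 2a-x\) and \(x\mapsto 2b-x\) gives translation by \(2(a-b)\), which generates \(\F_p\). So the union is connected, and with \(p-1\) edges it is a Hamiltonian path from \(a\) to \(b\). A tree has exactly one spanning tree, namely itself, so the determinant is \(\pm1\).

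The main obstacle is the pair \(C,P_a\). The edge sums of \(C\) are \(2x+1\), so exactly one edge, \(\{a-\tfrac12,a+\tfrac12\}\), is shared with \(P_a\) and cancels. Then \(C-P_a\) has \(p-1+\tfrac{p-1}{2}-2\) edges, so it is not a tree and genuine signed cancellation among spanning trees is needed. It is easiest to think of this graph as the path obtained from \(C\) by deleting the edge \(\{a-\tfrac12,a+\tfrac12\}\), with \(-1\)-signed chords \(\{x,2a-x\}\) that reflect the path about its midpoint vertex \(a\). The reflection \(\rho\colon x\mapsto 2a-x\) is an automorphism of \(C-P_a\) that preserves signs. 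I would first try to build a sign-reversing involution on spanning trees, for example by swapping the innermost path edge with the chord it ``parallels'', so that the trees cancel in pairs and exactly one tree (the path itself) survives. If that proves fiddly, the fallback is to compute \(\det\mathcal{L}(C-P_a)^a\) directly. Deleting vertex \(a\) and folding along \(\rho\) makes the matrix act on \(\rho\)-symmetric and \(\rho\)-antisymmetric vectors separately. The antisymmetric block is that of a path with doubled diagonal corrections, and the symmetric block is nearly triangular. I would evaluate both determinants by the standard tridiagonal recursion and check that the product is \(\pm1\) for all odd \(p\), verifying small \(p\) by hand first.
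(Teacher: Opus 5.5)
Your reduction is correct and matches the paper: additivity of $\mathcal{L}$, invertibility over $\Z$ if and only if the determinant is $\pm1$, and the matrix tree theorem. So is your treatment of a pair $P_a,P_b$, which the paper handles the same way via $P_0-P_{1/2}$.

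The gap is the pair $C,P_a$. This is the entire difficulty of the proposition, and you only list strategies for it. Your first strategy also aims at a false target. Take $p=5$, $a=0$: $C-P_0$ is the positive path $2,1,0,-1,-2$ plus the single negative chord $\{1,-1\}$. It has $5$ edges; in general the count is $p-1+\frac{p-3}{2}$, not $p-1+\frac{p-1}{2}-2$, and for $p=3$ it \emph{is} a tree. Its three spanning trees each delete one edge of the triangle $\{0,1,-1\}$, with signs $+,-,-$. So the signed count is $-1$, and no sign-reversing pairing can leave the path as the sole survivor. In general the count is $(-1)^{\lfloor (p-1)/4\rfloor}$, so the path is the wrong survivor already for $p=5,7,13$.

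Your fallback does work, but it has to be carried out, and you have the two blocks the wrong way round. Put $a=0$, $k=\frac{p-1}{2}$, and identify $\F_p$ with $\{-k,\dots,k\}$. Then $C-P_0$ is the positive path $-k,\dots,k$ with negative chords $\{j,-j\}$ for $1\le j\le k-1$, and $M=\mathcal{L}(C-P_0)^0$ has all diagonal entries $-1$.
\begin{itemize}
\item On $\rho$-symmetric vectors, in the basis $e_j+e_{-j}$, $M$ is tridiagonal with diagonal $(-2,\dots,-2,-1)$ and off-diagonal entries $1$. This is minus the Laplacian of the path $0,1,\dots,k$ with vertex $0$ deleted, so its determinant is $(-1)^k$.
\item On antisymmetric vectors, in the basis $e_j-e_{-j}$, $M$ is tridiagonal with diagonal $(0,\dots,0,-1)$ and off-diagonal entries $1$. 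Its leading minors satisfy $f_j=-f_{j-2}$ for $j<k$ and $f_k=-f_{k-1}-f_{k-2}$, with $f_0=1$, $f_{-1}=0$. Exactly one of $f_{k-1},f_{k-2}$ is $0$ and the other is $\pm1$, so $f_k=\pm1$.
\end{itemize}
The change of basis is invertible over $\Q$, so $\det M=\pm1$.

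Written out this way, your argument is a valid alternative to the paper's. The paper orders the vertices $1,-1,2,-2,\dots$ and eliminates $2\times2$ blocks over $\Z$ to get $\det\alpha(k)=\det\beta(k-1)$ and $\det\beta(k)=-\det\alpha(k-1)$. Your symmetry splitting makes the structure more transparent: a grounded path Laplacian times a zero-diagonal path matrix.
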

\begin{proof}
	Because of the action of the affine group \(\F_p\rtimes\F_p^*\) on the vertices, it suffices to consider the differences of \(P_0\) with \(C\) and \(P_{1/2}=P_{(p+1)/2}\).
	
	First consider \(C\). Write \(I=\left(\begin{smallmatrix} 1 & 0 \\ 0 & 1 \end{smallmatrix}\right)\), \(F=\left(\begin{smallmatrix} -1 & -1 \\ -1 & -1 \end{smallmatrix}\right)\), \(A=\left(\begin{smallmatrix} -1 & 0 \\ 0 & -1 \end{smallmatrix}\right)\) and \(B=\left(\begin{smallmatrix} 0 & -1 \\ -1 & 0 \end{smallmatrix}\right)\). Consider the \(2k\times 2k\)-block matrix 
	\[\alpha(k)=\begin{pmatrix}F & I & \\ I & F \\ & & \!\!\!\smash{\ddots}\!\!\! \\ & & & F & I \\ & & & I & F & I \\ & & & & I & A \end{pmatrix}\]
	and similarly \(\beta(k)\) with \(B\) in the place of \(A\). Using elementary row and column operations we obtain that
	\[\det(\alpha(k)) = \det\begin{pmatrix}\ddots\!\!\! \\ & F & I \\ & I & B &  \\ & &  & A \end{pmatrix} = \det(A)\cdot \det(\beta(k-1)) =\det(\beta(k-1))\]
	and similarly \(\det(\beta(k))=\det(B)\cdot\det(\alpha(k-1))=-\det(\alpha(k-1))\). It follows by induction that \(\det(\alpha(k)),\det(\beta(k))\in\{-1,1\}\) for all \(k\geq0\). We are done because \(\alpha(\frac{p-1}{2})\) is exactly \(\mathcal{L}(P_0-C)^0\) once we order the vertices \(0,1,-1,2,-2,3,-3,\dotsc\).
	
	Now consider \(P_{1/2}\). We obtain the graph
	\[P_0-P_{1/2} = \vcenter{\hbox{\begin{tikzpicture}[scale=.5,vrtx/.style args = {#1/#2}{%
					circle, draw, thick, fill=white,
					minimum size=.1, label=#1:#2}]
				\draw (0,0) -- (1,1) -- (1,-1) -- (2,1) -- (2,-1);
				\draw[dashed] (2,-1) -- (2.5,0);
				\draw[dotted] (3,0) -- (3.5,0);
				\draw[dashed] (4,0) -- (4.5,1);
				\draw (4.5,1) -- (4.5,-1);
				\node[shape=circle,fill=black, label=above:$\scriptstyle0$,scale=0.3] (0) at (0,0) {};
				\node[shape=circle,fill=black, label=above:$\scriptstyle1$,scale=0.3] (1) at (1,1) {};
				\node[shape=circle,fill=black, label=above:$\scriptstyle2$,scale=0.3] (2) at (2,1) {};
				\node[shape=circle,fill=black, label=above:$\scriptstyle\frac{p-1}{2}$,scale=0.3] (x) at (4.5,1) {};
				\node[shape=circle,fill=black, label=below:$\scriptstyle-1$,scale=0.3] (m1) at (1,-1) {};
				\node[shape=circle,fill=black, label=below:$\scriptstyle-2$,scale=0.3] (m2) at (2,-1) {};
				\node[shape=circle,fill=black, label=below:$\scriptstyle\frac{p+1}{2}$,scale=0.3] (y) at (4.5,-1) {};
	\end{tikzpicture}}},  \tag{1}\label{eq:graph}
	\]
	where the vertical edges are negative and the diagonal edges positive. This is clearly a tree, so \(P_0-P_{1/2}\) has a single spanning tree.
\end{proof}

The previous proposition provides only a minor improvement for matrices over a result from Lenstra (3.1 in \cite{Lenstra}) for number fields, namely that \(\Z[\zeta_p]\) contains a clique of size \(p\) for each prime \(p\).
For the following proposition, however, we are not aware of a comparable result from number theory.

\begin{proposition}
	For each odd prime \(p\), the set \(\{P_0^\star,\dotsc,P_{p-1}^\star,C^\star\}\) is a clique of graphs.
\end{proposition}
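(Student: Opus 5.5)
The plan is to reduce everything to counting signed spanning trees via Theorem~\ref{thm:matrix_tree}. Two preliminary observations do most of the work.

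First, the Laplacian is linear in the graph: \(\mathcal{L}(G)-\mathcal{L}(H)=\mathcal{L}(G-H)\). Hence \(\mathcal{L}(G)^v-\mathcal{L}(H)^v=\mathcal{L}(G-H)^v\), and we must show \(\det\mathcal{L}(G-H)^v=\pm1\) for every pair of distinct graphs in the set and every vertex \(v\).

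Second, \(\mathcal{L}(G-H)\) is symmetric with all row and column sums zero, so all its principal cofactors coincide. It therefore suffices to compute the signed spanning-tree count once per pair.

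Finally, translation by \(\F_p\) fixes \(\star\), preserves \(C^\star\), and sends \(P^\star_b\) to \(P^\star_{b+a}\). This works because the extra edge \(\star\)--\((-1)\) of \(P_0^\star\) becomes \(\star\)--\((a-1)\) in \(P_a^\star\). So we only need to treat \(P_0^\star-C^\star\) and \(P_0^\star-P_a^\star\) for \(a\neq0\). The multiplicative symmetry used in the previous proof is lost, since it moves the edge at \(\star\).

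\emph{The pair \(P_0^\star,C^\star\).} The graph \(P_0^\star-C^\star\) is \(P_0-C\) together with a single edge \(\star\)--\((-1)\). Thus \(\star\) is a leaf in every spanning tree, and the signed tree count is \(\pm\) that of \(P_0-C\). The latter is \(\pm1\) by the previous proposition.

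\emph{The pairs \(P_0^\star,P_a^\star\).} First determine \(H=P_0-P_a\). Its edges are \(x\sim -x\) with sign \(+\), and \(x\sim 2a-x\) with sign \(-\). The fixed points are \(0\) and \(a\) respectively. Composing the two involutions gives translation by \(2a\), which is a single \(p\)-cycle. Hence \(H\) is a path with alternating signs, exactly as in \eqref{eq:graph} with \(a\) in place of \(1/2\). Explicitly, writing \(t(x)=x/(2a)\), the path runs
\[0,\ 2a,\ -2a,\ 4a,\ -4a,\ \dotsc\]
and ends at \(a\). With \(h=(p-1)/2\), a vertex \(x\) sits at an odd position iff \(t(x)\in\{1,\dotsc,h\}\).

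Now \(P_0^\star-P_a^\star\) is \(H\) plus the positive edge \(\star\)--\((-1)\) and the negative edge \(\star\)--\((a-1)\). This is a unicyclic graph; its cycle consists of the two \(\star\)-edges and the subpath of \(H\) from \(-1\) to \(a-1\). The spanning trees are obtained by deleting one cycle edge. The sign of such a tree is the total sign divided by the sign of the deleted edge. Therefore the signed count is \(\pm(\#\text{positive}-\#\text{negative cycle edges})\).

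The two \(\star\)-edges cancel in this difference, and the subpath alternates in sign. So the count is \(\pm1\) exactly when the subpath has odd length, i.e.\ when \(-1\) and \(a-1\) sit at positions of opposite parity.

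\emph{The parity check (the main obstacle).} Put \(c=1/(2a)\). Then \(t(-1)=-c\neq0\) and \(t(a-1)=-c+\tfrac{p+1}{2}\). It remains to verify that adding \(h+1=\tfrac{p+1}{2}\) modulo \(p\) swaps the odd set \(\{1,\dotsc,h\}\) with the nonzero even set \(\{h+1,\dotsc,p-1\}\):
\begin{itemize}
\item the odd set \(\{1,\dotsc,h\}\) is sent to \(\{h+2,\dotsc,p-1\}\cup\{0\}\), all of which are even positions;
\item the nonzero even set \(\{h+1,\dotsc,p-1\}\) is sent to \(\{1,\dotsc,h\}\), all odd.
\end{itemize}
The only exception would be \(0\mapsto h+1\), which keeps the parity even. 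This is excluded because \(-c\neq0\).

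Hence the path between \(-1\) and \(a-1\) always has odd length, every determinant is \(\pm1\), and the set \(\{P_0^\star,\dotsc,P_{p-1}^\star,C^\star\}\) is a clique of graphs. I expect the bookkeeping of path positions and this parity argument to be the delicate step. In particular one must handle correctly the cases where \(-1\) or \(a-1\) is an endpoint \(0\) or \(a\) of the path.
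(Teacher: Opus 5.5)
Your proof is correct, and it has the same skeleton as the paper's proof. You reduce by translations, dispose of $C^\star$ because $\star$ is a leaf, and treat each $P^\star$ pair as a unicyclic graph whose two $\star$-edges cancel, leaving an alternating subpath of $P_0-P_a$ that must have odd length.

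The difference is in how many $a$ you check. The paper checks only $P_0^\star-P_{1/2}^\star$ and appeals to symmetry for the rest. You correctly observe that this symmetry is not available: multiplication by $\lambda\neq1$ sends the edge $\star$--$(a-1)$ to $\star$--$(\lambda a-\lambda)$, which is not the $\star$-edge of $P_{\lambda a}^\star$. You then handle every $a\neq0$ using the labelling $t(x)=x/(2a)$ and the shift $t(a-1)=t(-1)+\tfrac{p+1}{2}$.

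This is a real improvement, not a stylistic choice. Already for $p=5$ (where $3=1/2$ in $\F_5$), the cycles in $P_0^\star-P_1^\star$ and $P_0^\star-P_3^\star$ have lengths $5$ and $3$. So these signed graphs are not isomorphic, and the case $a=1/2$ does not imply the others. Your parity computation is exactly what closes the paper's reduction step.

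Your argument also covers the endpoint cases uniformly. The only residue whose parity is not flipped by the shift is $t=0$, and it is excluded because $t(-1)=-1/(2a)\neq0$. So the caveat in your final sentence is unnecessary.
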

\begin{proof}
	By symmetry it suffices to consider the differences of \(P_0^\star\) with \(C^\star\) and \(P_{1/2}^\star\).
	
	For \(C^\star\), note that \(P_0^\star-C^\star\) is simply \(P_0-C\) with an additional vertex \(\star\) connected to a single vertex in \(\F_p\).
	Since every spanning tree of \(P_0^\star-C^\star\) must contain this edge, we have a bijection between the spanning trees of \(P_0^\star-C^\star\) and those of \(P_0-C\) that preserves signs.
	Hence we are done by the previous proposition.
	
	Now consider \(P_{1/2}^\star\). 
	As seen in (\ref{eq:graph}), \(P_0-P_{1/2}\) is a line of edges of alternating signs. Then \(P_0^\star-P_{1/2}^\star\) is this same line with additionally the two points \(-1\) and \(1/2-1=(p-1)/2\) connected through \(\star\). 
	Hence \(P_0^\star-P_{1/2}^\star\) has a single cycle, which we observe has odd length, since the distance between \(-1\) and \((p-1)/2\) in \(P_0-P_{1/2}\) is odd.
	All spanning trees of \(P_0^\star-P_{1/2}^\star\) are obtained by removing a single edge of its unique cycle, and since the (absolute) difference between the number of positive edges and negative edges on this cycle is \(1\), we may conclude similarly that the (absolute) difference between the number of spanning trees of positive sign and that of negative sign is \(1\), as was to be shown.
\end{proof}

We combine the propositions into the following theorem.

\begin{theorem}\label{thm:graphs}
	For all odd primes \(p\) the rings \(\textup{Mat}_{p-1}(\Z)\) and \(\textup{Mat}_p(\Z)\) both contain an exceptional clique consisting of \(p+1\) symmetric matrices with coefficients in \(\{-2,-1,0,1\}\). \qed
\end{theorem}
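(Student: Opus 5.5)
The plan is to deduce the theorem directly from the two preceding propositions on cliques of graphs. By definition, a clique of graphs \(\mathcal{G}\) on a vertex set of size \(N\) yields, for any fixed vertex \(v\), the exceptional clique \(\{\mathcal{L}(G)^v \mid G\in\mathcal{G}\}\subseteq\textup{Mat}_{N-1}(\Z)\). For \(\textup{Mat}_{p-1}(\Z)\) I will use the first proposition, with \(\mathcal{G}=\{P_0,\dotsc,P_{p-1},C\}\) on \(\F_p\) (so \(N=p\)). For \(\textup{Mat}_p(\Z)\) I will use the second, with \(\{P_0^\star,\dotsc,P_{p-1}^\star,C^\star\}\) on \(\mathbb{P}^1(\F_p)\) (so \(N=p+1\)).

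First I check that each family has \(p+1\) elements and that the map \(G\mapsto\mathcal{L}(G)^v\) is injective on it. Injectivity is automatic: each difference \(\mathcal{L}(G)^v-\mathcal{L}(H)^v=\mathcal{L}(G-H)^v\) is invertible, hence nonzero, as long as \(N-1\geq1\), which holds since \(p\geq3\). Distinctness of the graphs themselves is also clear, since \(C\) is the cycle and the \(P_a\) are distinct rotations of \(P_0\), whose unique isolated vertex is \(a\). The resulting matrices are symmetric because graphs are symmetric and the Laplacian only adds a diagonal.

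The remaining work is the coefficient bound \(\{-2,-1,0,1\}\). I need the paper's sign convention \(\mathcal{L}(G)=G+D\) with zero row sums. Then the off-diagonal entries are entries of a graph and lie in \(\{0,1\}\), and each diagonal entry is minus the degree of its vertex. I will read off degrees from Example~\ref{ex:graphs}:
\begin{itemize}
\item in \(C\) every vertex has degree \(2\);
\item in \(P_a\) every vertex has degree \(1\) except the isolated vertex \(a\), which has degree \(0\);
\item in \(C^\star\) the vertex \(\star\) has degree \(0\);
\item in \(P_0^\star\) the vertex \(\star\) is joined only to \(-1\). Since \(-1\) is matched to \(1\) in \(P_0\), its degree rises to \(2\), and \(0\) becomes degree \(0\) or stays so.
\end{itemize}
So every degree is at most \(2\), and every diagonal entry lies in \(\{0,-1,-2\}\). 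Deleting a row and column preserves all of this.

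The main (minor) obstacle is this bookkeeping, together with confirming that the graphs really have \(0/1\) entries, i.e. unsigned edges. The signed graphs appear only as the differences \(G-H\) used inside the propositions' proofs. Once that is confirmed, the theorem follows immediately.
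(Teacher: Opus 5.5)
Your proposal is correct and follows the paper's route: the theorem is obtained by applying the two preceding propositions and taking the reduced Laplacians $\mathcal{L}(G)^v$, over $\F_p$ for $\textup{Mat}_{p-1}(\Z)$ and over $\mathbb{P}^1(\F_p)$ for $\textup{Mat}_p(\Z)$. The paper leaves the injectivity, symmetry and coefficient bookkeeping implicit, and your degree count fills it in correctly: all degrees are at most $2$, so with the convention $\mathcal{L}(G)=G+D$ the diagonal entries lie in $\{0,-1,-2\}$ and the off-diagonal entries in $\{0,1\}$.
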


\section{Rank 2 cliques}\label{sec:rank_2}

In this section we will show that up to equivalence \(\textup{Mat}_2(\Z)\) contains only three maximum exceptional cliques.
We define the matrices \(\phi=\big(\begin{smallmatrix}0&1\\1&1\end{smallmatrix}\big)\) and \(\zeta=\big(\begin{smallmatrix}0&1\\-1&1\end{smallmatrix}\big)\).

\begin{lemma}\label{lem:integer_conjugacy}
	Every exceptional unit in \(\textup{Mat}_2(\Z)\) is conjugate to either \(\zeta\), \(\phi-1\), \(\phi\) or \(\phi+1\) and is equivalent to either \(\zeta\) or \(\phi\).
\end{lemma}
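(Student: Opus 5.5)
Let \(u\in\textup{Mat}_2(\Z)\) be an exceptional unit with characteristic polynomial \(f=X^2+aX+b\). The first step repeats the computation of Example~\ref{ex:phi} at the level of matrices. Since \(\det u=b\) and \(u\) is invertible over \(\Z\), we get \(b=\pm1\). The characteristic polynomial of \(u-1\) is \(f(X+1)\), and \(\det(u-1)=\pm1\) gives \(a+b+1=\pm1\). Hence \(f\) is one of the four polynomials \(f_\zeta,f_{\phi-1},f_\phi,f_{\phi+1}\) listed there. These are exactly the characteristic polynomials of the matrices \(\zeta,\phi-1,\phi,\phi+1\) defined above. All four are irreducible over \(\Q\) (discriminants \(-3,5,5,5\)), so \(u\) is not scalar.

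The second step upgrades "same characteristic polynomial" to "\(\textup{GL}_2(\Z)\)-conjugate". I would use the Latimer--MacDuffee correspondence. For an irreducible monic \(f\) of degree \(2\) with root \(\alpha\), \(\textup{GL}_2(\Z)\)-conjugacy classes of integer matrices with characteristic polynomial \(f\) correspond bijectively to isomorphism classes of \(\Z[\alpha]\)-modules that are fractional ideals of \(\Z[\alpha]\). Concretely, \(\Z^2\) becomes a \(\Z[\alpha]\)-module via \(u\), and it is torsion-free of rank \(1\).

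In our cases \(\Z[\alpha]\) is \(\Z[\zeta_6]\) or \(\Z[\phi]\). Both are maximal orders (discriminants \(-3\) and \(5\) are fundamental) with class number \(1\). So every such module is free of rank one, and there is exactly one conjugacy class per polynomial. To keep the argument self-contained, I could do this directly. Pick \(v\in\Z^2\) primitive and show that \(\{v,uv\}\) can be chosen to be a \(\Z\)-basis, since the lattice \(\Z[u]v\) has index equal to the norm of an ideal, which is principal. Then in the basis \(\{v,uv\}\), \(u\) is the companion matrix, which matches the chosen representative up to an explicit conjugation. I expect this step to be the main obstacle: it is where the non-trivial arithmetic (class number one) enters, and one must take care with \(\textup{GL}_2\) versus \(\textup{SL}_2\). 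Only \(\textup{GL}_2(\Z)\)-conjugacy is needed, which is realized by inner automorphisms and hence lies in \(G(\nrg)\).

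The final step is equivalence. Conjugation lies in \(\Aut(\nrg)\subseteq G(\nrg)\), so \(u\) is equivalent to one of \(\zeta,\phi-1,\phi,\phi+1\). As remarked after Definition~\ref{def:clique_equivalence}, an exceptional unit is equivalent to its inverse and to \(1-u\). For the matrix \(\phi\) we have \(\phi^2=\phi+1\), so \(\phi^{-1}=\phi-1\). Also \(1-\phi^{-1}=2-\phi\), whose inverse \(-(\phi-1)^{-1}\cdot\ldots\) can be computed directly: since \((2-\phi)(\phi+1)=\phi+2-\phi^2=1\), it equals \(\phi+1\). Chaining these operations as in the diagram of Example~\ref{ex:phi}, which uses only polynomial identities in \(\phi\) and therefore holds verbatim for the matrix \(\phi\), shows that \(\phi-1\), \(\phi\) and \(\phi+1\) are all equivalent. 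The one exception in that diagram is the "conjugate" edges, but these are not needed, because the inverse and \(x\mapsto1-x\) edges already connect the three. This gives the claim that every exceptional unit is equivalent to either \(\zeta\) or \(\phi\).
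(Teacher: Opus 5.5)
Your proposal is correct and follows the paper's route. The paper likewise reduces to the four characteristic polynomials of Example~\ref{ex:phi}. It then obtains $\mathrm{GL}_2(\Z)$-conjugacy from the Latimer--MacDuffee/class-number-one input (by citing \cite{matrix_conjugacy}), and derives the equivalence from the inverse and $x\mapsto 1-x$ moves of that example, as you do.

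One small clarification for your self-contained variant: $v$ must be chosen as a generator of $\Z^2$ as a $\Z[u]$-module, which exists by principality. An arbitrary primitive vector does not work; for example, the vector corresponding to $2+\phi$ spans a sublattice of index $5$.
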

\begin{proof}
	As seen in Example~\ref{ex:phi}, each exceptional unit \(u\in\textup{Mat}_2(\Z)\) shares a characteristic polynomial with either \(\zeta\), \(\phi-1\), \(\phi\) or \(\phi+1\). That they are  conjugate then follows, in the case of \(\zeta\), from \cite[Theorem 11.2]{matrix_conjugacy}; and, in the case of \(\phi\), and consequently for \(\phi-1\) and \(\phi+1\), it follows from \cite[ Example~9.10, Corollary~10.10 and Theorem~11.8]{matrix_conjugacy}. The equivalence also follows from our Example~\ref{ex:phi}.
\end{proof}

\begin{lemma}\label{lem:test_clique_equivalence}
	Suppose \(\crg\) is a commutative ring. 
	For an exceptional clique \(\clq\subseteq \crg\), write \([\clq]\) for the multiset of elements \((s-t)^{-1}\cdot (u-v)\) for all pairwise distinct \(s,t,u,v\in \clq\).
	If exceptional cliques \(\clq,\clqb\subseteq \crg\) are equivalent, then there exists some \(\sigma\in\Aut(\crg)\) such that \(\sigma[\clq]=[\clqb]\). \qed
\end{lemma}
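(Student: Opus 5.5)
The plan is to check that the multiset \([\clq]\) transforms in a controlled way under each of the three generators of \(G(\crg)\) of Lemma~\ref{lem:equivalent_clique}, namely translation, left multiplication by a unit, and automorphisms. Since \(\crg\) is commutative, left multiplication is just multiplication by an element of \(\crg^*\). The set of \(\sigma\in\Aut(\crg)\) relating \([\clq]\) to \([\clqb]\) composes along a chain of generators. So it suffices to show that each generator \(g\) satisfies \([g\clq]=\sigma_g[\clq]\) for some automorphism \(\sigma_g\): the identity for translations and multiplications, and \(g\) itself for an automorphism.

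Write \(D=g\clq\). The action of \(g\) is a bijection \(\clq\to D\), and it carries ordered quadruples of pairwise distinct elements of \(\clq\) bijectively to such quadruples of \(D\). It therefore suffices to compare the quotient \((s-t)^{-1}(u-v)\) with the one attached to the image quadruple. This quotient makes sense because \(s-t\) is a unit, \(\clq\) being a clique.

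The three cases go as follows.
\begin{enumerate}
\item For a translation \(x\mapsto x+r\), the differences \(s-t\) and \(u-v\) are unchanged, so each quotient is unchanged.
\item For multiplication \(x\mapsto cx\) with \(c\in\crg^*\), the quotient becomes \((c(s-t))^{-1}c(u-v)=(s-t)^{-1}(u-v)\). This uses commutativity to cancel \(c\).
\item For \(\tau\in\Aut(\crg)\), the quotient becomes \(\tau\big((s-t)^{-1}(u-v)\big)\), since ring automorphisms preserve inverses.
\end{enumerate}
Multiplicities are preserved in each case because the map on quadruples is a bijection.

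Finally, a general element of \(G(\crg)\) is a composite of such generators. Composing, we get that if \(D=g\clq\) then \([D]=\sigma[\clq]\), where \(\sigma\) is the automorphism component of \(g\) (or the product of the automorphism factors in the chosen word). No step is a real obstacle. The only point needing care is that multiplication by \(c\) cancels, which is exactly where commutativity of \(\crg\) is used, together with bookkeeping showing that \([\cdot]\) is a multiset indexed by ordered quadruples, so the bijection preserves multiplicities.
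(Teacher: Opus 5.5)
Your proposal is correct. The paper states this lemma with \qed and gives no proof, leaving it as a routine check. Your verification is exactly that check: translations and unit multiplications leave each quotient \((s-t)^{-1}(u-v)\) unchanged (the latter by commutativity), and automorphisms transform it by \(\tau\), with multiplicities preserved because the quadruples correspond bijectively.
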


\begin{proposition}\label{prop:dim2_conj}
	Each exceptional clique of size \(4\) in \(\textup{Mat}_2(\Z)\) is equivalent to either \(\{0,1,\phi,\phi+1\}\), \(\{0,1,\zeta,1-\phi\}\) or \(\{0,1,\zeta,\phi-1\}\).
\end{proposition}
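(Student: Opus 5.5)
Normalize first: by the remark after Definition~\ref{def:clique_equivalence}, every clique of size \(4\) is equivalent to one of the form \(\{0,1,u,v\}\). Here \(u\), \(v\) and \(u-v\) are units, and \(1-u\) and \(1-v\) are units too, so \(u\) and \(v\) are exceptional units. By Lemma~\ref{lem:integer_conjugacy} we may conjugate (an inner automorphism fixes \(0\) and \(1\)) so that \(u\) is one of \(\zeta,\phi-1,\phi,\phi+1\). The symmetries \(u\mapsto u^{-1}\) and \(u\mapsto 1-u\) are realised on \(\{0,1,u,v\}\) by multiplication by \(u^{-1}\) and by \(x\mapsto 1-x\), followed by renormalization. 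Using these together with the diagram of Example~\ref{ex:phi}, we may further assume \(u\in\{\zeta,\phi\}\).

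\emph{Case \(u=\phi\).} The centralizer of \(\phi\) in \(\textup{Mat}_2(\Z)\) is \(\Z[\phi]\). The plan is to show that \(v\) need not commute with \(\phi\) a priori, but its shape is tightly constrained. Write \(v=\big(\begin{smallmatrix}a&b\\c&d\end{smallmatrix}\big)\). The four conditions \(\det v=\pm1\), \(\det(1-v)=\pm1\), \(\det(\phi-v)=\pm1\) and \(\det(\phi+1-v)\) or similar are polynomial equations in \(a,b,c,d\). More cleanly, \(v\) and \(v-\phi\) are both exceptional-unit–type elements, so the traces and determinants of \(v\), \(1-v\), \(\phi-v\) take only the finitely many values from Example~\ref{ex:phi}. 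Note that \(\det(\phi-v)=\det\phi-\operatorname{tr}(\phi^{\mathrm{adj}}v)+\det v\). This forces the linear quantity \(\operatorname{tr}(\phi^{\mathrm{adj}}v)\) into a finite set, so \((a,b,c,d)\) lies on the quadric \(ad-bc=\pm1\) with \(a+d\) and \(b+c-a\) (or the like) in finite sets. The residual single quadratic equation then needs a finiteness or parametrization argument, and I expect this to be the main obstacle: \(ad-bc=\pm1\) with \(a+d\) fixed is a Pell-type conic with infinitely many points. I would handle it by exploiting the remaining freedom: conjugation by the centralizer units \(\pm\phi^k\) acts on these solutions, and I would show that the solution set decomposes into finitely many orbits. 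For each orbit, the determinant of the fourth difference (e.g.\ the condition that \(v\) is also compatible with \(\phi+1\) when it appears) pins things down. The outcome should be that \(v\) commutes with \(\phi\) and the clique equals \(\{0,1,\phi,\phi+1\}\) up to equivalence, or that \(v\) is of \(\zeta\)-type, which reduces to the second case.

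\emph{Case \(u=\zeta\).} This case covers all cliques containing a \(\zeta\)-type element after the reduction. Here \(\zeta\) has finite order \(6\), and its centralizer is \(\Z[\zeta]\), which has only \(3\)-cliques. Hence \(v\) does not commute with \(\zeta\) and is of \(\phi\)-type (or again \(\zeta\)-type; if it is of \(\zeta\)-type, \(\zeta-v\) is excluded by a trace computation). The same linear-trace trick applies, now with the advantage that conjugation by the finite group \(\langle\zeta\rangle\) and the Pell-orbit analysis gives a finite list. Enumerating it should give exactly \(v\in\{1-\phi,\phi-1\}\) up to conjugation by the centralizer of \(\zeta\).

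Finally, to confirm that the three listed cliques are indeed cliques, I would check the determinants directly. To confirm they are pairwise inequivalent, I would argue that the first consists of commuting elements while the others do not. The second and third are separated by an invariant in the spirit of Lemma~\ref{lem:test_clique_equivalence}, for instance the characteristic polynomials of the normalized ratios \((s-t)^{-1}(u-v)\) across all orderings, which are invariant under \(G(\textup{Mat}_2(\Z))\) up to conjugation.
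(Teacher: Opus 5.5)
\textbf{The gap is in the case \(u=\phi\).} Your reduction to \(u\in\{\zeta,\phi\}\) and your linear-trace observation agree with the paper. Once \(d_0=\det v\) is fixed, the conditions on \(\det(v-1)\) and \(\det(v-u)\) determine \(c\) and \(d\) from \(a,b\), leaving one binary quadratic in \((a,b)\).

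In the case \(u=\phi\), where the real work lies, you stop at that obstacle, and the proposed way around it fails:
\begin{enumerate}
\item The clique \(\{0,1,\phi,v\}\) imposes only the three conditions \(\det v,\det(v-1),\det(v-\phi)\in\{\pm1\}\), and you have used all of them. There is no ``fourth difference'' to pin down orbit representatives, since \(\phi+1\) is not in the clique.
\item The finite-orbit claim is unproved. Even granting it, you give no way to identify the orbits.
\item The dichotomy you expect at the end is false. For \(w=\left(\begin{smallmatrix}1&1\\1&0\end{smallmatrix}\right)\) one has \(\det w=\det(w-1)=\det(w-\phi)=-1\), so \(\{0,1,\phi,w\}\) is a clique. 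Yet \(w\) has characteristic polynomial \(X^2-X-1\) and \(\phi w\neq w\phi\). This clique reduces to the \(\zeta\) case only because \(\phi^{-1}w\) is of \(\zeta\)-type.
\end{enumerate}

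\textbf{The missing idea is to split by the sign vector} \((d_0,d_1,d_\phi)\), where \(d_t=\det(v-t)\). This is how the paper neutralizes the Pell-type conics.
\begin{enumerate}
\item If \((d_0,d_1)=(1,1)\), then \(v\) has trace and determinant \(1\), so its characteristic polynomial is \(X^2-X+1\).
\item If \((d_0,d_\phi)=(-1,-1)\), then \(\det(\phi^{-1}v)=\det(1-\phi^{-1}v)=1\). So \(\phi^{-1}v\), which lies in the normalized clique \(\phi^{-1}\{0,1,\phi,v\}\), has characteristic polynomial \(X^2-X+1\).
\end{enumerate}
Either way you are in the \(\zeta\) case. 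The map \(x\mapsto 1-gxg^{-1}\), with \(g\phi g^{-1}=1-\phi\), fixes \(\{0,1,\phi\}\) and swaps \(d_0,d_1\). So only the sign vectors \((-1,-1,1)\) and \((1,-1,1)\) remain. Parity forces \(a,b\) odd, and then:
\begin{enumerate}
\item the first gives \(a^2+ab-b^2-a+3b-1=0\), which is impossible mod \(4\);
\item the second gives \(N\big((a-1)+(b-1)\phi\big)=0\), forcing \(v=\phi+1\).
\end{enumerate}

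\textbf{Problems in the \(\zeta\) case.} Finiteness here is simply positive definiteness of \(a^2+ab+b^2\); there is no Pell phenomenon. Your claim that a \(\zeta\)-type \(v\) is excluded is false: \(v=\left(\begin{smallmatrix}1&1\\-1&0\end{smallmatrix}\right)\) has \(\det v=\det(v-1)=1\) and \(\det(v-\zeta)=-1\). So conjugation by the centralizer of \(\zeta\) alone does not reduce the solutions to two classes. You also need the sign-permuting maps \(x\mapsto 1-\zeta^{-1}x\) and \(x\mapsto 1-gxg^{-1}\).

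\textbf{Inequivalence step.} Commutativity is not invariant under translation, so it does not by itself separate \(\{0,1,\phi,\phi+1\}\) from the other two cliques. What works is that normalized cliques in one orbit differ by a map \(x\mapsto\alpha\big((x_1-x_0)^{-1}(x-x_0)\big)\) with \(\alpha\in\Aut(\textup{Mat}_2(\Z))\), and such a map does preserve commutativity.
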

\begin{proof}
	By Lemma~\ref{lem:test_clique_equivalence} the clique \(\clq_1=\{0,1,\phi,\phi+1\}\) is not equivalent to both \(\clq_2=\{0,1,\zeta,1-\phi\}\) and \(\clq_3=\{0,1,\zeta,\phi-1\}\) because \(1\in[\clq_1]\) yet \(1\not\in[\clq_2]\) and \(1\not\in[\clq_3]\). The remaining two cliques are not equivalent, since \([\clq_2]\) consists of \(6\) and \([\clq_3]\) of \(10\) distinct elements.
	
	Suppose \(\{0,1,u,v\}\) is an exceptional clique of size \(4\) and write \(d_t=\det(v-t)\in\{-1,1\}\) for \(t\in\{0,1,u\}\).
	By Lemma~\ref{lem:integer_conjugacy}, we may assume \(u=\big(\begin{smallmatrix}0&1\\s&1\end{smallmatrix}\big)\) for \(s\in\{1,-1\}\).
	Let \(g\in\textup{GL}_2(\Z)\) be such that \(gug^{-1}\) is the Galois conjugate of \(u\) in \(\Q(u)\) and consider the map \(\sigma\) given by \(x\mapsto 1-gxg^{-1}\).
	Note that \(\sigma\) fixes \(\{0,1,u\}\) and acts on the determinants as \((d_0,d_1,d_u)\mapsto (d_1,d_0,d_u)\).
	Write \(v=\big(\begin{smallmatrix}a&b\\c&d\end{smallmatrix}\big)\).

	With 
	\[\delta=d_1-d_0-1=-a-d \quad\text{and}\quad \varepsilon=d_u-d_0+s=-a+sb+c\] 
	we have
	\[a^2+ab-sb^2+\delta a + \varepsilon b + d_0=b(a-sb+\varepsilon)-a(-\delta-a) + d_0 = -(ad-bc)+d_0 = 0 \tag{2}\label{eq:X}.\]
	Note that \(\delta\) and \(\varepsilon\) are odd.
	It follows that \(a(\delta+a)+d_0\) is odd, hence \(b(a-sb+\varepsilon)\) must be odd.
	In particular, \(b\) and consequently \(a\) must be odd.
	
	(\emph{Case \(u=\zeta\)})
	Suppose \(s=-1\).
	Consider the map \(\tau\) given by \(x\mapsto1-u^{-1}x\) and note that it fixes \(\{0,1,u\}\) and acts on the determinants as \((d_0,d_1,d_u)\mapsto(d_u,d_0,d_1)\). Together with the action of \(\sigma\), we may assume \((d_0,d_1,d_u)\in\{(+1,+1,+1),(-1,+1,+1),(-1,-1,+1),(-1,-1,-1)\}\).
	From (\ref{eq:X}) we obtain
	\begin{align*}
		\Big(\frac{a+b}{2}\Big)^2 + \Big(\frac{a+\delta}{2}\Big)^2 + \Big(\frac{b+\varepsilon}{2}\Big)^2 = \frac{-2d_0+\delta^2+\varepsilon^2}{4} = 
		\begin{cases}
			0 & \textup{if } d_0=+1 \\
			1 & \textup{if } d_0=-1
		\end{cases},
	\end{align*}
	where each term on the left hand side is a non-negative integer. This puts on sufficient restrictions to efficiently enumerate the solutions for \(v\):
	\begin{align*}
		\begin{pmatrix}
			-1 & -1 \\
			-1 & \phantom{-}0
		\end{pmatrix},
		\begin{pmatrix}
			-1 & \phantom{-}1 \\
			\phantom{-}1 & \phantom{-}0
		\end{pmatrix},
		\begin{pmatrix}
			\phantom{-}1 & -1 \\
			\phantom{-}1 & -2
		\end{pmatrix},
		\begin{pmatrix}
			\phantom{-}1 & \phantom{-}1 \\
			\phantom{-}1 & \phantom{-}0
		\end{pmatrix},
		\begin{pmatrix}
			\phantom{-}1 & -1 \\
			-1 & 0
		\end{pmatrix},
		\begin{pmatrix}
			-1 & \phantom{-}1 \\
			-1 & \phantom{-}2
		\end{pmatrix}.
	\end{align*}
	The map \(1\mapsto u x u^{-1}\) fixes \(\{0,1,u\}\) and transitively permutes the first \(3\) and the last \(3\) matrices. Hence each gives a clique equivalent to either \(\{0,1,\zeta,1-\phi\}\) or \(\{0,1,\zeta,\phi-1\}\).
	
	(\emph{Case \(u=\phi\)}) Suppose \(s=1\). If \((d_0,d_1)=(+1,+1)\), then \(v^2-v+1=0\), so \(v\) is conjugate to \(\zeta\) and we are in the previous case. Similarly, if \((d_0,d_u)=(-1,-1)\), then \(u^{-1}\{0,1,u,v\}\) contains \(u^{-1}v\), which is conjugate to \(\zeta\). Together with the action of \(\sigma\), only the cases \((d_0,d_1,d_u)\in\{(-1,-1,+1),(+1,-1,+1)\}\) remain.
	In the former case, (\ref{eq:X}) becomes
	\[0 = a^2+ab-b^2-a+3b-1 \equiv ab-a-b-1 \equiv (a-1)(b-1)-2 \ (\textup{mod }4),\]
	which has no solutions for odd \(a\) and \(b\). In the latter case, we obtain
	\[a^2+ab-b^2-3a+b+1 = \big((a-1)+\phi (b-1)\big)\big((a-1)+(1-\phi)(b-1)\big)=N\big((a-1)+\phi(b-1)\big).\]
	Hence \(a=b=1\), which corresponds to the solution \(v=\phi+1\).
\end{proof}

\section{Miscellanea}\label{sec:miscellanea}

In this section we present various independent results.

So far we have presented examples that show for \(n=1,2,4\) that the upper bound of \(2^n\) on the clique number of \(\textup{Mat}_n(\Z)\) can be achieved. This result was already known for \(n\leq 4\), although the cliques produced by \cite{BC24} do not follow a conceptual construction. For \(n=4\), we obtain the optimal clique from a general construction in Example~\ref{ex:n4c16}. Here we present our attempt for \(n=3\), which does not seem to generalize to other dimensions.

\begin{example}\label{ex:n3c8}
	The ring \(\textup{Mat}_3(\Z)\) contains an exceptional clique of size \(8\).
	
	Let \(C_3=\langle(1\ 2\ 3)\rangle\) be the cyclic group of order \(3\) and consider its action on \(\textup{Mat}_3(\Z)\) by permuting the standard basis vectors. Let 
	\[M = \left(\begin{smallmatrix} 1 & 0 & -1 \\ 0 & 0 & 1 \\ -1 & 1 & 0\end{smallmatrix}\right) \in \textup{GL}_3(\Z).\]
	It is interesting to notice that \(\Z[M]\cong\Z[\zeta_7+\zeta_7^{-1}]\) is a cyclotomic ring, although we have not been able to do much with it.
	We claim that the image of \(\{0,1\}^{C_3}\) under the map \(\Z^{C_3}\to\textup{Mat}_3(\Z)\) given by 
	\[(a_\sigma)_{\sigma\in C_3} \mapsto \sum_{\sigma\in C_3} a_{\sigma} \cdot M^\sigma\] 
	is a maximal exceptional clique.
	It suffices to show that the image of \(\{-1,0,1\}^{C_3}\setminus\{0\}\) is in \(\textup{GL}_3(\Z)\).
	Both \(C_3\) and \(\Z^*=\{-1,1\}\) act on \(\{-1,0,1\}^{C_3}\setminus\{0\}\), so that it remains to only verify the determinants of
	\[ M\pm M^{(1\ 2\ 3)} \quad\textup{and}\quad M+M^{(1\ 2\ 3)}\pm M^{(1\ 3\ 2)}.\]
	Thus one can verify this is an exceptional clique by computing \(5\) determinants (including that of $M$) instead of \(\binom{8}{2}=28\). A direct computation shows their determinants are indeed $\pm 1$. 
	This approach does not seem to generalize well to higher dimensions if we pick a random matrix $M$ (with small coefficients), as the probability to find a good candidate is quite low. Perhaps it is necessary to use more structure (e.g. that of $\Z[M]$) but we were not able to do so.
\end{example}

So far we have not considered constructing exceptional cliques in orders of arbitrary degree. In the following two examples we provide heuristic constructions. 

\begin{example}\label{ex:general_nf_construction}
%	Fix \(m\in\Z_{\geq0}\) and let \(f\in\Z[X]\) be monic such that every non-zero polynomial of degree less than \(m\) and with coefficients in \(\{-1,0,1\}\) divides \(f\). Clearly we may construct such an \(f\) of every sufficiently large degree. Moreover, with large probability there exists for every sufficiently large degree an \(f\) for which \(h=f-1\) is irreducible. For \(m=3\), for example, \(f\) is a multiple of
%	\[X(X^2 - 1) (X^2 - X - 1) (X^2 - X + 1) (X^2 + 1) (X^2 + X - 1) (X^2 + X + 1).\]
%	Then with \(\Z[\alpha]=\Z[X]/(h)\) it holds that %\(\sum_{i=0}^{m-1} s_i \alpha^i \in \Z[\alpha]^*\) for all \((s_i)_i \in \{-1,0,1\}^m\) that are not the zero vector. Hence \(\Z[\alpha]\) contains the exceptional clique \(\{\sum_{i=0}^{m-1} t_i \alpha^i \,|\, (t_i)_i \in \{0,1\}^m\}\) of size \(2^m\).
%	Hence we may heuristically compute for every \(3^m\leq n \leq 3^{m+1}\) \inote{Where does this come from? I have naively computed the degree of the product of all monic polynomials of degree $<m$, removing the ones with independent coefficient 0 (except for X) and get $\frac{(2m-3)\cdot 3^{m-1}-1}{2}\leq m3^ {m-1}$ but I suppose one can do better?} an order of degree \(n\) with a clique of size \(2^m\approx n^{(\log 2) / (\log 3)}\).

%\textbf{Milan: I propose the following discussion+proposition as a replacement of the previous example}\\
Let $K$ be a number field and $C\subset\ent_K$ an exceptional clique with $0\in C$ and define $\Delta C=\{c-c'\mid c,c'\in C\}$. If $E\subset K$ is some subset, write $E[X]_{<m}$ for the set of polynomials in $K[X]_{<m}$ with coefficients in $E$. Let $P_m$ be the $\lcm$ of all irreducible factors over $\ent_K$ of all non-zero polynomials in $\Delta C[X]_{<m}$. Then, the set $C[X]_{<m}$ forms an exceptional clique of size $\# C^m$ in the order $\ent_K[X]/(P_m-1)$ which has rank $\deg P_m$. Notice that for any rank $r\ge \deg P_m$ one can replace $P_m$ by $X^{r-\deg P_m}P_m$ to obtain a clique of same size but in an order of rank $r$.\\
~\\
Let us apply the ideas of the previous example with $K=\Q$ and $C=\{0,1\}$. Here $\Delta C=\{-1,0,1\}$. We need to bound the degree of $P_m$. Let $E_m$ be the set of monic polynomials in $\Delta C[X]_{<m} $ with non-zero constant coefficient. One sees that $P_m\mid X\cdot\prod_{Q\in E_m} Q$ so we get the upper bound $$\deg P_m\le 1+\sum_{i=1}^{m-1} 2\cdot i\cdot 3^{i-1}\le m3^{m-1}.$$ Corollary 3 of \cite{breuillard_varju_2019} proves that under GRH, the probability that a polynomial in $E_m$ is irreducible is $1-O(m^{-\frac{1}{2}})$. This implies the asymptotic lower bound $$\deg P_m\ge (1-o(1))\cdot m3^{m-1},$$ so our upper bound is asymptotically tight.\\
We sum up the previous discussion in the following result.
\end{example}
\begin{proposition}
For any $n\in\Z_{>0}$, there exists a bound $N\in\Z_{>0}$ such that for any integer $r\ge N$ there exists an order of rank $r$ containing an exceptional clique of size $\ge n$. Moreover one can take $$N\le \lceil\log_2(n)\rceil\cdot n^{\log_2(3)}.$$
\end{proposition}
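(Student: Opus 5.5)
Fix $n$ and set $m=\lceil\log_2 n\rceil$, so that $2^m\geq n$. The plan is to apply the construction of Example~\ref{ex:general_nf_construction} with $K=\Q$ and $C=\{0,1\}$. There $\Delta C=\{-1,0,1\}$, and we let $P_m\in\Z[X]$ be the monic $\lcm$ of the irreducible factors of all non-zero polynomials in $\{-1,0,1\}[X]_{<m}$. We take $N=\deg P_m$. For $r\geq N$ we consider the ring $\Z[X]/(X^{r-N}P_m-1)$, which is free of rank $r$ since the polynomial is monic of degree $r$. We aim to show that $\{0,1\}[X]_{<m}$ maps injectively onto an exceptional clique of size $2^m\geq n$ there. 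To land in an order (a domain) we then pass to a quotient field, using Proposition~\ref{prop:injective_quotient}.

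The key verification is the following. For distinct $a,b\in\{0,1\}[X]_{<m}$, the difference $g=a-b$ is a non-zero element of $\{-1,0,1\}[X]_{<m}$. Write $F=X^{r-N}P_m$.
\begin{itemize}
\item Every irreducible factor of $g$ divides $P_m$ by construction, so some power of $F$ is divisible by $g$, say $F^k=gh$. I would arrange $k$ as follows: either include multiplicities in the definition of $P_m$, or note that irreducible factors of $g$ are coprime to $F-1$ modulo any common prime factor.
\item Modulo $F-1$ we have $F\equiv 1$, so $gh\equiv 1$ and $g$ is a unit in $\Z[X]/(F-1)$.
\item Injectivity holds because $\deg g<m\leq\deg F$.
\end{itemize}
Then we must reach an order. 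The ring $\Z[X]/(F-1)$ need not be a domain, so I would take an irreducible factor $h$ of $F-1$ and map the clique into $\Z[X]/(h)$. This map is injective on the clique by Proposition~\ref{prop:injective_quotient}, but $h$ has rank smaller than $r$. The paper's notion of order requires a domain, so I would instead choose the shift (or multiply $P_m$ by a suitable monic factor) so that $X^{r-N}P_m-1$ is irreducible. The main obstacle is this irreducibility for every $r\geq N$. I would try a Perron- or Eisenstein-type criterion, for instance replacing the padding $X^{r-N}$ by a factor giving an Eisenstein polynomial at a prime not dividing the relevant resultants. Alternatively, I would allow ``order'' in the weaker sense of a reduced finite free $\Z$-algebra mapping to a domain of rank $r$.

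Finally, for the bound on $N$ we use the estimate already derived in Example~\ref{ex:general_nf_construction}, namely $\deg P_m\le m3^{m-1}$. Since $2^{m-1}<n$, we have $3^{m-1}=(2^{m-1})^{\log_2 3}< n^{\log_2 3}$. This gives $N\le\lceil\log_2 n\rceil\cdot n^{\log_2 3}$, as claimed.
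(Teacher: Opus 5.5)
Your construction and degree count are exactly the paper's (Example~\ref{ex:general_nf_construction} with $K=\Q$, $C=\{0,1\}$). You use the ring $\Z[X]/(X^{r-N}P_m-1)$, in which every difference is a unit because it divides a power of $P_m$. You use the estimate $\deg P_m\le m3^{m-1}<\lceil\log_2 n\rceil\, n^{\log_2 3}$. No arranging of $k$ is needed. A non-zero $g\in\{-1,0,1\}[X]_{<m}$ has content $1$ and leading coefficient $\pm1$, so it is $\pm$ a product of monic irreducibles dividing $P_m$. Hence $g\mid P_m^k$ for $k$ its largest multiplicity.

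The obstacle you flag is a genuine gap, and the paper does not close it either. It simply calls $\Z[X]/(P_m-1)$ an order, although its definition of order requires a domain and nothing in the argument makes $X^{r-N}P_m-1$ irreducible. Your proposal leaves this open.

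The Eisenstein idea cannot work. For $m\ge 2$ the polynomial $X$ is a difference of clique elements, so $X\mid P_m$ and every $P_mQ-1$ has constant term $-1$. A shifted criterion at $\ell$ would force $(\beta-a)^r=-1$ for every root $\beta\in\overline{\F}_\ell$ of $P_m$, which you have no way to arrange for every $r$.

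A clean repair is Hilbert irreducibility. Let $d=\deg P_m$, take $r\ge d+1$, and set $H(t,X)=P_m(X)\,(X^{r-d}+t)-1$. It is linear in $t$ with coprime coefficients $P_m$ and $X^{r-d}P_m-1$. Hence it is irreducible in $\Q[t,X]$ and, by Gauss's lemma, in $\Q(t)[X]$. So some $t_0\in\Z$ makes $H(t_0,X)$ monic and irreducible of degree $r$. Then $\Z[X]/(H(t_0,X))$ is an order of rank $r$ in which $P_m\cdot(X^{r-d}+t_0)=1$. Thus $P_m$ is a unit there, and $\{0,1\}[X]_{<m}$ is an exceptional clique of size $2^m\ge n$.

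This costs $N=d+1$ instead of $d$. The exact count $d\le\frac{3+(2m-3)3^{m-1}}{2}$ gives $d+1\le m3^{m-1}$ whenever $m\ge 2$, so the stated bound survives; the cases $n\le 2$ are trivial. If instead one reads ``order'' loosely as a commutative ring that is free of finite rank over $\Z$, your argument, like the paper's, is already complete.
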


\begin{example}
	Let \(f\in\Z[\phi][X]\) (where as before $\phi$ is a root of $X^2-X-1$) be a monic polynomial such that \(X-a/b\) divides \(f\) for all \(a,b\in\{-1,0,1\}+\{-\phi,0,\phi\}\) and \(b\neq 0\). The least degree of such an \(f\) is \(15\). If we again assume \(f-1\) is irreducible, we obtain a clique of size \(16\) given by \(\{aX+b\,|\, a,b \in \{0,1,\phi,\phi+1\}\}\subseteq\Z[\phi][X]/(f-1)\). In particular, this applies to a number field of degree \(30\).
\end{example}

The following proposition shows that large exceptional cliques in \(\textup{Mat}_n(\Z)\) must generate simple \(\Q\)-algebras.

\begin{lemma}\label{lem:Q_to_Z}
	Let \(Z\) be a principal ideal domain with field of fractions \(Q\). 
	Let \(\nrg\subseteq\textup{Mat}_n(Q)\) be a \(Z\)-subalgebra that is finitely generated as \(Z\)-module. 
	Then there exists an injective ring homomorphism \(\nrg\to\textup{Mat}_n(Z)\). \qed
\end{lemma}
%\begin{proof}
%Let \(M=R\cdot Z^n\subseteq Q^n\) and note that \(M\) is a finitely generated torsion-free \(Z\)-module. Hence \(M\) is free of rank at most \(n\) by the structure theorem of finitely generated modules over principal ideal domains. The map \(R\to \textup{End}_Z(M)\) is injective and \(\textup{End}_Z(M)\cong\textup{Mat}_n(Z)\).
%\end{proof}

\begin{proposition}\label{prop:clique_implies_simple}
	Let \(Z\) be a principal ideal domain with field of fractions \(Q\) and let \(\m\subseteq Z\) be a maximal ideal.
	Let \(\nrg\subseteq\textup{Mat}_n(Z)\) be a \(Z\)-subalgebra that is finitely generated as \(Z\)-module. 
	If \(\nrg\) contains an exceptional clique of size larger than \(\#(Z/\m)^{n-1}\), then \(Q\tensor_Z \nrg\) is a simple \(Q\)-algebra.
\end{proposition}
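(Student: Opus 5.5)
We argue by contraposition: assuming \(A=Q\tensor_Z \nrg\) is not simple, we bound every exceptional clique of \(\nrg\) by \(\#(Z/\m)^{n-1}\). The point is that non-simplicity lets \(\nrg\) act on a lattice of rank smaller than \(n\), and after reducing modulo \(\m\) the bound of Corollary~\ref{cor:clique_bound}(i) (equivalently Proposition~\ref{prop:injective_quotient}(i)) gives the estimate.

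First we find a nonzero proper ring quotient of \(A\). We have \(A\subseteq\textup{Mat}_n(Q)\) finite dimensional. Suppose \(A\) is not simple (and \(A\neq 0\); if \(\nrg=0\) the clique has at most one element). Then \(A\) has a proper nonzero two-sided ideal \(I\), hence a surjective ring homomorphism \(A\to B=A/I\) with \(B\neq 0\) and \(\dim_Q B<\dim_Q A\). Dimension alone is not what we need; we need a faithful \(B\)-module of \(Q\)-dimension at most \(n-1\). We get it as follows. Consider \(V=Q^n\) as an \(A\)-module, which is faithful. Then either \(IV\) is a proper nonzero submodule of \(V\), or \(IV=V\). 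In the first case \(B\) acts on \(V/IV\), which has dimension \(<n\). In the second case we swap roles. If \(I\) is not a direct factor, we pass to a maximal ideal \(\m'\supseteq I\), which is still proper. Then \(A/\m'\) is simple and acts faithfully on a simple subquotient \(W\) of \(V\). Since \(A\) is not simple, \(V\) cannot consist only of copies of \(W\) with the kernel of \(A\to\End(W)\) zero. So \(\dim W\le n-1\) provided \(V\) has a composition factor with nonzero annihilator, i.e.\ provided the map \(A\to\End_Q(W)\) is not injective. This holds because an injective map would make \(A\) embed into the simple algebra \(A/\m'\), so \(A=A/\m'\) would be simple.

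Thus we have a ring homomorphism \(A\to\End_Q(W)\cong\textup{Mat}_k(Q)\) with \(k=\dim W\le n-1\), and it is nonzero since \(W\neq0\). We restrict it to \(\nrg\). The image is a \(Z\)-subalgebra of \(\textup{Mat}_k(Q)\) that is finitely generated as \(Z\)-module. By Lemma~\ref{lem:Q_to_Z} the image embeds in \(\textup{Mat}_k(Z)\), so we obtain a nonzero ring homomorphism \(\nrg\to\textup{Mat}_k(Z)\). Next we compose with the reduction \(\textup{Mat}_k(Z)\to\textup{Mat}_k(Z/\m)\), which preserves the unit (it is a unital ring map and \(k\geq 1\)), hence the composite is nonzero. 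Proposition~\ref{prop:injective_quotient}(ii) carries the clique to an exceptional clique in \(\textup{Mat}_k(Z/\m)\), and part (i) makes this injective. Corollary~\ref{cor:clique_bound}(i) then bounds the clique by \(\#(Z/\m)^k\le\#(Z/\m)^{n-1}\), which is the contrapositive. If \(Z/\m\) is infinite the statement is vacuous.

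The main obstacle is the middle step: producing a nonzero representation of dimension strictly less than \(n\) from non-simplicity. The composition-factor argument is cleanest. Take a composition series of \(V\) as an \(A\)-module. If every composition factor \(W_i\) had annihilator \(0\), then each would be a faithful simple module. A finite-dimensional algebra with a faithful simple module is primitive, hence simple Artinian after modding out \(\textup{Jac}(A)\). We also need \(\textup{Jac}(A)=0\), since \(\textup{Jac}(A)\) kills every \(W_i\) and would otherwise be a proper ideal; so \(A\) would be simple, a contradiction. Hence some \(W_i\) has nonzero annihilator, so it is not all of \(V\), giving \(1\le\dim W_i\le n-1\) as required.
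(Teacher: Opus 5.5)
Your argument is correct and is essentially the paper's. In both, the clique bound (via Lemma~\ref{lem:Q_to_Z}, reduction modulo \(\m\), and Corollary~\ref{cor:clique_bound}) rules out any nonzero \(A\)-submodule or subquotient of \(V=Q^n\) of dimension below \(n\). This makes \(V\) a faithful simple \(A\)-module. You then conclude that \(A\) is simple via ``a primitive Artinian ring is simple'', where the paper instead uses \(\textup{Jac}(A)=0\) together with Artin--Wedderburn, applying the clique bound once more to each factor.

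Your exploratory middle paragraph (``swap roles'', passing to \(\m'\supseteq I\), and asserting without proof that \(A/\m'\) acts on a subquotient of \(V\)) is not justified as written. Drop it: your final composition-series paragraph carries the proof on its own.
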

\begin{proof}
	Write \(M=\textup{Mat}_n(Q)\) and \(A=Q\tensor_Z \nrg\) and let \(V\) be a simple left \(M\)-module.
	Suppose there is some non-zero \(A\)-submodule \(U \subseteq V\) of dimension \(d\) over \(Q\).
	Then we have a ring homomorphism \(\nrg\to\textup{End}(U)\) and in turn a homomorphism \(\nrg\to\textup{Mat}_d(Z)\) by Lemma~\ref{lem:Q_to_Z}.
	As \(\nrg\) has a clique of size larger than \(\#(Z/\m)^{n-1}\), it follows from Proposition~\ref{prop:injective_quotient} that \(n\leq d\).
	Hence \(V\) is a simple \(A\)-module, \(\textup{Jac}(A)\subseteq\textup{Jac}(M)=0\), and \(A\) is semisimple. 
	Write \(A\cong \prod_{i=1}^t \textup{Mat}_{m_i}(D_i)\) as in Theorem~\ref{thm:artin-wedderburn}, so that with \(d_i=\dim_Q(D_i)\) we have \(\sum_{i} m_i d_i \leq n\).
	%If \(k\geq 2\), then \(\Q^{m_1}\times \Q\)
	% such that \(\sum_{i=1}^k n_i d_i = n\). 
	We have homomorphisms \(\textup{Mat}_{m_i}(D_i)\to\textup{Mat}_{m_i d_i}(Q)\) and in turn \(\nrg\to\textup{Mat}_{m_id_i}(Z)\) by Lemma~\ref{lem:Q_to_Z}.
	Hence \(n\leq m_id_i\) by Proposition~\ref{prop:injective_quotient}.
	We conclude that \(t=1\) and thus \(A\) is simple.
\end{proof}

Choosing $Z=\Z$ and $\m=(2)$ gives the following corollary.

\begin{corollary}
	Let $\nrg\subseteq\textup{Mat}_n(\Z)$ be a $\Z$-subalgebra. If $\nrg$ contains an exceptional clique of size larger than $2^{n-1}$, then $\Q\tensor_Z\nrg$ is a simple $\Q$-algebra.
\end{corollary}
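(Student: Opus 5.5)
This follows directly from Proposition~\ref{prop:clique_implies_simple} with \(Z=\Z\), \(Q=\Q\) and \(\m=(2)\). The only work is checking that the hypotheses of that proposition hold in the setting of the corollary.

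First, \(\Z\) is a principal ideal domain with field of fractions \(\Q\), and \((2)\) is a maximal ideal with \(\#(\Z/2\Z)=2\). So the threshold \(\#(Z/\m)^{n-1}\) in the proposition is exactly \(2^{n-1}\), which matches the hypothesis of the corollary.

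Second, the proposition requires \(\nrg\) to be finitely generated as a \(\Z\)-module, and the corollary only says \(\nrg\) is a \(\Z\)-subalgebra of \(\textup{Mat}_n(\Z)\). This is automatic: \(\textup{Mat}_n(\Z)\cong\Z^{n^2}\) is a finitely generated module over the noetherian ring \(\Z\). Hence every \(\Z\)-submodule of it is finitely generated; in fact it is free of rank at most \(n^2\), since \(\Z\) is a principal ideal domain. In particular \(\nrg\) is finitely generated.

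With both hypotheses in place, the proposition says that if \(\nrg\) contains an exceptional clique of size larger than \(2^{n-1}\), then \(\Q\otimes_\Z\nrg\) is a simple \(\Q\)-algebra, which is the claim.

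The only step that needs any thought is the finite-generation check, and it is standard. The rest is substitution into Proposition~\ref{prop:clique_implies_simple}; its use of Lemma~\ref{lem:Q_to_Z} and of Proposition~\ref{prop:injective_quotient} applied to the reduction \(\textup{Mat}_d(\Z)\to\textup{Mat}_d(\F_2)\), giving cliques of size at most \(2^d\), is already contained in that proof.
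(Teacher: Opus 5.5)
Your proposal is correct and is the paper's argument: the paper obtains the corollary by specializing Proposition~\ref{prop:clique_implies_simple} to $Z=\Z$ and $\m=(2)$, so the threshold becomes $2^{n-1}$. You also check explicitly that $\nrg$ is finitely generated as a $\Z$-module, being a submodule of $\Z^{n^2}$; the paper leaves this implicit.
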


\section{Table of records}\label{sec:table_of_records}

In this section we present a table of the sizes of the largest cliques known to us in small dimension, both in orders and in matrix rings. \\

We briefly explain the notations. For \(n\in\Z_{>0}\) let \(\zeta_n\) be a primitive \(n\)-th root of unity and \(\eta_n=\zeta_n+\zeta_n^{-1}\). We abbreviate the number field \(\Q[X]/(X^n+f_{n-1}X^{n-1}+\dotsm+f_0)\) by \((f_{n-1},\dotsc,f_0)\).
We denote by \(\A(L/K,u)\) a clique in \(\mathcal{A}_{L/K}\) obtained from Proposition~\ref{prop:nt_square_clique} with the parameter \(u\in\mathcal{O}_L\), and let \(\mathcal{B}(L/F/K,u)\) denote a clique in the twisted algebra \(\mathcal{A}_{L/K}^u\) obtained from Lemma~\ref{lem:mat_embedding_twist} and Proposition~\ref{prop:nt_square_clique_twisted} with $\eta=N_{L/F}(u)$.  We write $\Mat_n\oplus\Mat_m$ for the construction using Lemma \ref{lem:trivial_induction}.

\begin{align*}
	\begin{array}{r|lr|lr}
		n & \textup{number field of degree \(n\)} & \#\clq & \textup{Mat}_n(\Z) & \# \clq \\ \hline
		2 & \Q(\eta_5) & 4 & & 4 \\ 
		3 & \Q(\eta_7) & 7 & \text{Example~\ref{ex:n3c8}} & 8 \\
		4 & \text{Example~\ref{ex:nf_deg_4}} & 10 & \mathcal{B}\big(\Q(\eta_5)/\Q(\eta_5)/\Q,\phi\big) & 16 \\
		5 & \Q(\eta_{11}) & 11 & & 11 \\ 
		6 & \Q(\eta_{5}, \eta_{7}) & 18 & \A\big(\Q(\eta_5,\eta_7)/\Q(\eta_5),\eta_7-\eta_5\big) & 49 \\
		7 & (0,-3,-1,1,3,1,-1) & 17 & & 17 \\ 
		8 & \Q(\eta_{17}) & 16 & \mathcal{B}\big(\Q(\zeta_5)/\Q(\eta_5)/\Q,\phi\big)  & 25 \\
		9 & \Q(\eta_{19}) & 18 & \mathcal{B}\big(\Q(\eta_7)/\Q(\eta_7)/\Q,\eta_7\big)  & 49 \\
		10& (-2,-1,4,-3,-4,7,4,-5,-1,1) & 20 & \textup{Theorem~\ref{thm:clique_doubling}}  & 22 \\
		11& \Q(\eta_{23}) & 22 &   & 22 \\
		12& \Q(\eta_{5\cdot 7}) & 22 & \A\big(\Q(\eta_5,\eta_7)/\Q,\phi\big)  & 324 \\
		13& (-1,1,-4,5,-5,6,-2,-5,5,-7,5,-1,1) & 21 & & 21 \\
		14& \Q(\eta_{29}) & 28 & \textup{Theorem~\ref{thm:clique_doubling}} & 34 \\
		15& \Q(\eta_{31}) & 30 & \A\big(\Q(\eta_7,\eta_{11})/\Q(\eta_7),\eta_{11}-\eta_7\big) & 121 \\
		16& \Q(\zeta_{17}) & 17 & \A\big(\Q(\eta_5,\eta_{17})/\Q(\eta_5),\eta_{17}-\eta_5\big) & 256 \\
		17& \text{Example~\ref{ex:general_nf_construction}} & 8 & \textup{Mat}_8 \oplus \textup{Mat}_9 & 25 \\
		18& \Q(\eta_{37}) & 36 & \mathcal{B}\big(\Q(\eta_5,\eta_7)/\Q(\eta_5,\eta_7)/\Q(\eta_7),\eta_7\big) & 324 \\
		19& \text{Example~\ref{ex:general_nf_construction}}  & 8 &  \textup{Mat}_8 \oplus \textup{Mat}_{11}  & 22 \\
		20& \Q(\eta_{41}) & 40 & \A\big(\Q(\zeta_{33})/\Q(\zeta_3),\zeta_6\big) & 121 \\
	\end{array}
\end{align*}

\section{Exceptional cliques in $\Hom_\crg(M,N)$ }\label{sec:rankmetric}

In this section we generalize the definition of an exceptional clique. The motivation for this stems again from applications to cryptography, see Section~\ref{sec:exceptional_cliques_in_cryptography}.

\begin{definition}
	Let \(\crg\) be a commutative ring and let \(M\) and \(N\) be \(\crg\)-modules.
	A subset \(\clq\subseteq\Hom_\crg(M,N)\) is an \emph{exceptional clique} if for all distinct \(f,g\in \clq\) the map \(f-g\) has a left inverse.
\end{definition}

If we take \(\crg=\Z\) and \(M=N=\Z^n\) for some \(n\geq 0\), this agrees with the definition of an exceptional clique in \(\textup{Mat}_n(\Z)\), while if we take \(M=N=\crg\) for some commutative ring \(\crg\) this agrees with the definition of an exceptional clique in \(\crg\). 
As in Corollary~\ref{cor:clique_bound}, the size of an exceptional clique in \(\Hom_\Z(\Z^m,\Z^n)\) is bounded in size by \(2^n\).

\begin{lemma}\label{lem:local_left_inverse}
	Let \(\crg\) be a Noetherian commutative ring and \(f:M\to N\) a morphism of Noetherian \(\crg\)-modules. Then \(f\) has a left inverse if and only if for every maximal ideal \(\m\subset \crg\) the localization \(f_\m:M_\m\to N_\m\) has a left inverse.
\end{lemma}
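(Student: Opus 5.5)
The forward direction is immediate. If \(g:N\to M\) satisfies \(g\circ f=\id_M\), then localizing gives \(g_\m\circ f_\m=\id_{M_\m}\), since localization is a functor. For the converse I would recast the existence of a left inverse as the surjectivity of a single module map and then use that surjectivity is a local property. Consider the \(\crg\)-linear restriction map
\[ f^*:\Hom_\crg(N,M)\to\Hom_\crg(M,M),\qquad g\mapsto g\circ f. \]
Then \(f\) has a left inverse exactly when \(\id_M\in\im(f^*)\). Let \(Q\) be the cokernel of \(f^*\). It suffices to show that the image of \(\id_M\) in \(Q\) is zero, and an element of a module is zero if and only if it vanishes in every localization \(Q_\m\) at a maximal ideal \(\m\).

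The key step is to identify the localization of \(f^*\) with \((f_\m)^*\). Because \(\crg\) is Noetherian and \(M,N\) are Noetherian, hence finitely presented, the natural map
\[ \Hom_\crg(N,M)_\m\to\Hom_{\crg_\m}(N_\m,M_\m) \]
is an isomorphism, and likewise for \(\Hom_\crg(M,M)_\m\). To see this, I take a finite presentation \(\crg^a\to\crg^b\to N\to 0\) and use left exactness of \(\Hom(-,M)\) together with exactness of localization. This reduces the claim to the case \(N=\crg\), where it is trivial. These identifications are compatible with composition by \(f\), respectively by \(f_\m\). Since localization is exact, \(Q_\m\) is the cokernel of \((f_\m)^*\), and the image of \(\id_M\) in \(Q_\m\) corresponds to \(\id_{M_\m}\).

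By hypothesis \(f_\m\) has a left inverse, so \(\id_{M_\m}\in\im((f_\m)^*)\), and hence the class of \(\id_M\) vanishes in \(Q_\m\) for every \(\m\). It follows that the class vanishes in \(Q\). Therefore \(\id_M=g\circ f\) for some \(g\in\Hom_\crg(N,M)\), which gives the desired left inverse.

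The only genuine obstacle is the compatibility of \(\Hom\) with localization. This is where finite presentation is used: Noetherianity of \(N\), the source module, is what is needed for both \(\Hom\) modules. Once that identification is in place, the rest is formal.
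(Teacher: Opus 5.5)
Your proof is correct and follows the paper's argument. Both reduce to the map \(g\mapsto g\circ f\) from \(\Hom_\crg(N,M)\) to \(\Hom_\crg(M,M)\), use that surjectivity is local (for you, vanishing of the class of \(\id_M\) in the cokernel), and identify \(\Hom\) with its localization via a finite presentation of the source; the paper does this last step with the five lemma.

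One small correction to your closing remark: for \(\Hom_\crg(M,M)\) the source is \(M\), so that identification uses the finiteness of \(M\) (at least finite generation, for injectivity), not of \(N\). The hypotheses supply this, so the proof is unaffected.
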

\begin{proof}
	The condition that \(f\) has a left inverse is equivalent to the map \(\Hom_\crg(N,M)\to\Hom_\crg(M,M)\) given by \(g\mapsto g\circ f\) being surjective. Since surjectivity is a local property, it remains to show that the natural map \(\Hom_\crg(A,B)_\m\to\Hom_{k_\m}(A_\m,B_\m)\) is an isomorphism for every maximal \(\m\subset \crg\) and all Noetherian \(\crg\)-modules \(A\) and \(B\). By the assumptions on \(A\) and \(\crg\) there exists an exact sequence \(\crg^s\to \crg^t\to A\to 0\). We obtain the following commutative diagram 
	\begin{center}
		\begin{tikzcd}[row sep = 1.4em]
			0 \arrow{r}& \Hom_\crg(A,B)_\m \arrow{r}\arrow{d}& B^t_\m \arrow{r}\arrow{d}& B^s_\m \arrow{d} \\
			0 \arrow{r}& \Hom_{\crg_\m}(A_\m,B_\m) \arrow{r}& B^t_\m \arrow{r}& B^s_\m
		\end{tikzcd}
	\end{center}
	where the exact rows are obtained by applying the Hom and localization functors to the original sequence in different orders, and using the natural isomorphism \(\Hom_\crg(\crg^t,B)\cong B^t\). The claim now follows from the five lemma.
\end{proof}

%\begin{definition}
%Let \(k\) be a non-zero commutative ring and \(f:M\to N\) a morphism of \(k\)-modules. We define the \emph{projective rank} of \(f\) to be
%\[\textup{proj rk}(f) = \max\{ \rk(A) \,|\, A\subseteq \im(f)\textup{ a free \(k\)-module and a summand of \(N\)} \}.\]
%\end{definition}

\begin{definition}
	Let \(k\) be a commutative ring and \(\mathfrak{a}\subseteq R\) an ideal of a commutative \(k\)-algebra. 
	For each maximal ideal \(p\subset k\) we write \(v_p(\mathfrak{a})\) for \(\dim_{k/p}R/(\mathfrak{a}+pR)\), or simply \(v_p(a)\) when \(\mathfrak{a}=aR\) for some \(a\in R\). 
	%Equivalently, it is the rank of \(\prod_\m R_\m\) where the product ranges over all maximal ideals of \(R\) not containing \(\mathfrak{a}\).
\end{definition}

%\begin{lemma}\label{lem:rk_det_rel}
%Let \(p\subset k\) be a maximal ideal an let \(\mathfrak{a}\subseteq R\). Then \(v_p(\mathfrak{a})\geq n - \textup{ord}_p\, N(\mathfrak{a})\).
%Let \(K\) be a number field and \(\mathfrak{a}\subseteq \mathcal{O}_K\) be a non-zero ideal. 
%Then \(v_p(\mathfrak{a})\leq \textup{ord}_p\, N_{K/\Q}(\mathfrak{a})\).
%\end{lemma}
%\begin{proof}
%It holds that \(\#(R/(\mathfrak{a}+pR)) = N(\mathfrak{a}+pR) \mid  N(\mathfrak{a})\).
%\(\mathcal{O}_K/\mathfrak{a}\twoheadrightarrow \mathcal{O}_K/(\mathfrak{a}+p\mathcal{O}_K) \cong \prod_\m (\mathcal{O}_K/\m) \), where the product ranges over all maximal ideals \(\m\subset \mathcal{O}_K\) above \(p\) containing \(\mathfrak{a}\).
%Hence \(\prod_\m \#(\mathcal{O}_K/\m) \mid N(\mathfrak{a})\).
%\end{proof}

\begin{proposition}\label{prop:rank_metric_general}
	Let \(k\) be a Dedekind domain and \(R=k[x]\) be a monogenic commutative \(k\)-algebra that is free of rank \(n\) as \(k\)-module. Let \(0<m\leq n\).
	If \(\clq\subseteq R\) is such that \(v_p(u-v)\leq n-m\) for all distinct \(u,v\in \clq\) and maximal ideal \(p\subset k\), then \(\clq\) maps injectively to \(\Hom_k(kx^0+\dotsm+kx^{m-1},R)\) and the image is an exceptional clique.
\end{proposition}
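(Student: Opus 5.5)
The plan is to send each \(u\in\clq\) to the restriction \(\mu_u\) of multiplication by \(u\) to the submodule \(M_m:=kx^0+\dotsm+kx^{m-1}\subseteq R\). Since \(R=k[x]\) is free of rank \(n\) with basis \(1,x,\dotsc,x^{n-1}\) (monogenic), \(M_m\) is a free direct summand of rank \(m\).

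Injectivity of \(u\mapsto\mu_u\) on \(\clq\) is immediate. Because \(m>0\), we have \(1\in M_m\) and \(\mu_u(1)=u\), so \(u\) can be read off from \(\mu_u\). It therefore remains to show that for distinct \(u,v\in\clq\) the map \(\mu_w\colon M_m\to R\), with \(w=u-v\), has a left inverse.

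By Lemma~\ref{lem:local_left_inverse} (since \(k\) is Noetherian and all modules involved are finitely generated), it suffices to check this after localizing at every maximal ideal \(p\subset k\). Then \(k_p\) is a DVR (or a field), and \(M_{m,p}\) and \(R_p\) are free. I will use the standard fact that a map \(\varphi\colon F_1\to F_2\) of finite free modules over a local ring \(k_p\) with residue field \(\kappa=k/p\) has a left inverse iff \(\varphi\otimes\kappa\) is injective. To see this, choose a \(\kappa\)-linear retraction of \(\varphi\otimes\kappa\) and lift it to \(r\colon F_2\to F_1\). Then \(r\circ\varphi\) is an endomorphism of \(F_1\) that is the identity mod \(p\), hence an isomorphism by Nakayama (via the determinant being a unit). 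So the task reduces to proving that multiplication by \(\bar w\) is injective on the image of \(M_m\) in \(A:=R/pR\cong\kappa[X]/(\bar f)\), where \(\bar f\) is monic of degree \(n\) and the image of \(M_m\) is the space of polynomials of degree \(<m\).

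In \(A\), the multiplication map \(\bar w\colon A\to A\) is an endomorphism of an \(n\)-dimensional \(\kappa\)-space. Hence \(\dim\ker(\bar w)=\dim A/\bar wA\), and the latter equals \(v_p(w)\) by definition, which is at most \(n-m\) by hypothesis. Now \(\ker(\bar w)=\textup{Ann}_A(\bar w)\) is an ideal of the principal quotient ring \(\kappa[X]/(\bar f)\), so it has the form \((d)/(\bar f)\) with \(d\mid\bar f\) monic, and its dimension is \(n-\deg d\). Therefore \(\deg d\geq m\). A nonzero class represented by a polynomial of degree \(<m\leq n\) lies in \((d)/(\bar f)\) iff that polynomial is divisible by \(d\), which is impossible for degree reasons. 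So \(\ker(\bar w)\) meets the image of \(M_m\) trivially, and the required injectivity follows.

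The only delicate step is this last one, namely turning the cokernel bound \(v_p(w)\leq n-m\) into a statement about which elements are killed. This is exactly where monogenicity is used: it makes every ideal of \(A\) principal and generated by a divisor of \(\bar f\), which lets us compare degrees. The local left-inverse criterion and the Nakayama argument are routine.
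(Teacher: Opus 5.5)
Your proof is correct, and after the shared first step (localizing via Lemma~\ref{lem:local_left_inverse}) it takes a genuinely different route from the paper.

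The paper splits off the factor $A=\prod_{a\notin\m}R_\m$ of $R$ on which $a=u-v$ is invertible. It composes with $a^{-1}\pi_A$ to reduce to the map $\pi_A\circ\iota_M$, and then claims $A\cong R/(Q)$ with $Q$ monic of degree $n-v_p(a)\geq m$. You instead reduce by Nakayama to injectivity modulo $p$. You then use rank--nullity to turn the cokernel bound $v_p(w)\leq n-m$ into $\ker(\bar w)=(d)/(\bar f)$ with $\deg d=n-v_p(w)\geq m$.

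Where the paper's route works, it gives an explicit left inverse. As written, however, it needs more than is available:
\begin{enumerate}
\item $R_p$ is in general not a product of its localizations unless $k_p$ is Henselian, so one would first have to pass to the completion.
\item More seriously, writing $R=A\times B$ one only gets $v_p(a)=\dim_\kappa B/(aB+pB)\leq\dim_\kappa B/pB=\rk B$. Hence $\rk A\leq n-v_p(a)$, possibly strictly.
\end{enumerate}
For example, take $k=\Z$, $R=\Z[x]/(x^2)$, $C=\{0,x+p\}$ and $m=1$. The hypothesis holds, yet $x+p$ lies in the only maximal ideal above $p$, so $A=0$ and the paper's reduction yields nothing. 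The same happens for $a=\zeta-1$ at $q=n+1$ in the proof of Theorem~\ref{thm:rank-metric}, a case the paper actually needs.

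Your argument works directly with the quantity $\dim_\kappa R/(wR+pR)$ that appears in the hypothesis. It therefore handles these cases and proves the proposition as stated.
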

\begin{proof}
	Write \(M=kx^0+\dotsm+kx^{m-1}\). We have \(M\oplus(kx^m+\dotsm+kx^{n-1})=R\) and let \(\pi_M:R\to M\) and \(\iota_M:M\to R\) be the corresponding projection and inclusion.
	Let \(u,v\in \clq\) be distinct and \(a=u-v\). We will show that the map \(f=a\cdot \iota_M:M\to R\) has a left inverse.
	By Lemma~\ref{lem:local_left_inverse} we may assume \(k\) is local with maximal ideal \(p\). 
	Write \(A=\prod_\m R_\m\), a summand of \(R\), where the product ranges over all primes \(\m\subset R\) not containing \(a\), and let \(\pi_A:R\to A\) and \(\iota_A:A\to R\) be the corresponding maps.
	Consider the map \(g=a^{-1} \cdot \pi_A : R\to A\), which is well-defined since \(a\) is a unit in \(A\). 
	Since multiplication by \(a\) commutes with \(\pi_A\), we conclude that \(g\circ f=\pi_A\circ\iota_M\). It remains to show that the latter map has a left inverse.
	This follows readily from the fact that \(A=R/Q\) for some monic polynomial \(Q\) in \(x\) of degree \(n-v_p(a)\geq m\).
\end{proof}

\begin{lemma}\label{lem:norm_bound}
	Let \(p\) be an odd prime and \(\zeta\) a primitive \(p\)-th root of unity. For all \(0 \leq n \leq p\) we have
	\[\max\Big\{ \Big|N_{\Q(\zeta)/\Q}\Big( \sum_{i=0}^{n-1} a_i\zeta^i \Big)\Big| : a_i\in\{-1,0,1\} \Big\} \leq \Big(\frac{np}{p-1}\Big)^{(p-1)/2}. \]
\end{lemma}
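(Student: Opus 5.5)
The plan is to bound the norm via the AM--GM inequality applied to the squared absolute values of the conjugates. Fix $a_0,\dotsc,a_{n-1}\in\{-1,0,1\}$ and put $\alpha=\sum_{i=0}^{n-1}a_i\zeta^i$; if $\alpha=0$ there is nothing to prove. The conjugates of $\alpha$ are $\sigma_j(\alpha)=\sum_i a_i\zeta^{ij}$ for $j=1,\dotsc,p-1$. These come in complex-conjugate pairs $j,\,p-j$, so
\[ |N_{\Q(\zeta)/\Q}(\alpha)| = \prod_{j=1}^{p-1}|\sigma_j(\alpha)| = \Big(\prod_{j=1}^{p-1}|\sigma_j(\alpha)|^2\Big)^{1/2}. \]
By AM--GM,
\[ \prod_{j=1}^{p-1}|\sigma_j(\alpha)|^2 \leq \Big(\frac{1}{p-1}\sum_{j=1}^{p-1}|\sigma_j(\alpha)|^2\Big)^{p-1}, \]
so it suffices to show $\sum_{j=1}^{p-1}|\sigma_j(\alpha)|^2\leq np$. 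Taking the square root then gives exactly $(np/(p-1))^{(p-1)/2}$.

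For the sum, I will include the term $j=0$ and use Parseval on $\Z/p\Z$. Since $n\leq p$, the exponents $0,\dotsc,n-1$ are distinct mod $p$, so the $a_i$ define a function on $\Z/p\Z$ whose discrete Fourier transform is $j\mapsto\sum_i a_i\zeta^{ij}$. Orthogonality of characters then gives
\[ \sum_{j=0}^{p-1}\Big|\sum_i a_i\zeta^{ij}\Big|^2 = p\sum_i a_i^2 \leq pn. \]
Dropping the nonnegative $j=0$ term $|\sum_i a_i|^2$ yields $\sum_{j=1}^{p-1}|\sigma_j(\alpha)|^2\leq np$, as required.

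The main point needing care is the Parseval step: it uses that the exponents $0,\dotsc,n-1$ are distinct modulo $p$, which is exactly where the hypothesis $n\leq p$ enters. If $n=p$ and all $a_i$ are equal, then $\alpha=0$ and the bound is trivial, so no division issues arise. Beyond that, the argument is just the standard norm estimate via AM--GM, which is also sharper than the naive bound $n^{p-1}$.
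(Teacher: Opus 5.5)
Your proposal is correct and matches the paper's proof essentially step for step. The paper likewise expands $\sum_{k=0}^{p-1}|\sigma_k(x)|^2$ via orthogonality of characters to get $p\sum_i a_i^2\leq np$, drops the $k=0$ term, and applies AM--GM to the squared absolute values of the $p-1$ conjugates to obtain $(np/(p-1))^{(p-1)/2}$.
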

\begin{proof}
	For \(p\nmid k\), write \(\sigma_k\in\textup{Aut}(\Q(\zeta))\) for \(\zeta\mapsto \zeta^k\).
	Recall that for all \(i\in\Z\) it holds that
	\[ \sum_{k=0}^{p-1} \zeta^{ki} = \begin{cases} p & \textup{if } p \mid i  \\ 0 & \textup{otherwise} \end{cases}. \]
	Let \(x=a_0\zeta^0+\dotsm+a_{n-1}\zeta^{n-1}\) with \(a_0,\dotsc,a_{n-1}\in\{-1,0,1\}\).
	Then
	\[ \sum_{k=0}^{p-1} |\sigma_k(x)|^2 = \sum_{k=0}^{p-1} \sigma_k(x\overline{x}) = \sum_{k=0}^{p-1} \sum_{i=0}^{n-1}\sum_{j=0}^{n-1}  a_i a_j \zeta^{k(i-j)} = \sum_{i=0}^{n-1}\sum_{j=0}^{n-1} a_i a_j \sum_{k=0}^{p-1} \zeta^{k(i-j)} = \sum_{i=0}^{n-1} a_i^2 p \leq np.\]
	Combined with the inequality of the arithmetic and geometric mean we obtain
	\[ |N_{\Q(\zeta)/\Q}(x)| = \prod_{k=1}^{p-1} |\sigma_k(x)| \leq \left(\frac{1}{p-1}\sum_{k=1}^{p-1} |\sigma_k(x)|^2 \right)^{(p-1)/2} \leq \Big(\frac{np}{p-1}\Big)^{(p-1)/2},\]
	as was to be shown.
\end{proof}

A prime \(p\) is \emph{safe} if \(p\) is odd and \((p-1)/2\) is prime. It is conjectured that there are infinitely many safe primes \cite{Shoup}, and if true the following theorem applies to infinitely many \(n\).

\begin{theorem}\label{thm:rank-metric}
	For every safe prime \(n+1\) and integer \(0<m\leq n/2\) there exists an exceptional clique in \(\Hom(\Z^m,\Z^n)\) of size \(2^n\).
\end{theorem}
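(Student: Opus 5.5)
Set \(p=n+1\), a safe prime, and let \(\zeta\) be a primitive \(p\)-th root of unity. The plan is to apply Proposition~\ref{prop:rank_metric_general} with \(k=\Z\) and \(R=\Z[\zeta]=\Z[x]\), where \(x=\zeta\); this ring is free of rank \(p-1=n\). As candidate clique I take
\[\clq=\Big\{\textstyle\sum_{i=0}^{n-1}a_i\zeta^i : a_i\in\{0,1\}\Big\},\]
which has \(2^n\) elements. If the hypothesis of Proposition~\ref{prop:rank_metric_general} holds, then \(\clq\) maps injectively to an exceptional clique in \(\Hom_\Z(\Z^m,\Z[\zeta])\cong\Hom(\Z^m,\Z^n)\), which is the claim. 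So the whole task is to check that for every non-zero \(a=\sum a_i\zeta^i\) with \(a_i\in\{-1,0,1\}\) and every prime \(\ell\) we have \(v_\ell(a)\le n-m\). Since \(m\le n/2\), it suffices to prove \(v_\ell(a)\le n/2\).

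For the prime \(\ell=p\): it is totally ramified, so \(\Z[\zeta]/(a,p)\) is either \(0\) or \(\F_p\). Hence \(v_p(a)\le 1\le n/2\).

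For a prime \(\ell\ne p\), let \(f\) be the order of \(\ell\) modulo \(p\). Then \(\ell\Z[\zeta]\) splits into \(n/f\) primes, each with residue degree \(f\). The quantity \(v_\ell(a)\) is \(f\) times the number of primes above \(\ell\) that contain \(a\). Because \(n=2q\) with \(q\) prime, the only possibilities are \(f\in\{1,2,q,2q\}\).

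First, \(a\) must be non-zero modulo every prime above \(\ell\). Indeed, \(a\) is a non-zero \(\{-1,0,1\}\)-combination of \(1,\zeta,\dots,\zeta^{n-1}\). The residue field is \(\F_\ell[\zeta]\), and in it the minimal polynomial of \(\zeta\) has degree \(f\). For \(f=2q=n\), the elements \(1,\dots,\zeta^{n-1}\) are independent modulo the prime, so \(a\not\equiv0\) (a coefficient \(\pm1\) is non-zero mod \(\ell\)), and \(v_\ell(a)=0\).

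For \(f=q\) there are exactly two primes above \(\ell\), and these are swapped by complex conjugation. So \(v_\ell(a)\in\{0,q,2q\}\), and I must rule out \(a\) lying in both primes, i.e. \(\ell\mid a\). But \(\ell\mid a\) means \(a/\ell\in\Z[\zeta]\). Writing \(a\) in the basis \(1,\dots,\zeta^{n-1}\), this forces \(\ell\mid a_i\) for all \(i\), which is impossible with \(a_i\in\{\pm1,0\}\) not all zero. Hence \(v_\ell(a)\le q=n/2\).

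The remaining cases \(f\in\{1,2\}\) are the main obstacle, since there \(v_\ell(a)\) could be large. Here I use the norm bound of Lemma~\ref{lem:norm_bound}. If \(v_\ell(a)>n/2\), then \(\ell^{v_\ell(a)}\) divides \(N(a)\), because each prime containing \(a\) contributes at least norm \(\ell^f\). On the other hand, \(f\le 2\) forces \(\ell\equiv\pm1\pmod p\), so \(\ell\ge 2p-1\). This gives \(|N(a)|\ge(2p-1)^{n/2}\). But Lemma~\ref{lem:norm_bound} with \(n\) terms gives
\[|N(a)|\le \Big(\frac{(p-1)p}{p-1}\Big)^{(p-1)/2}=p^{n/2}<(2p-1)^{n/2},\]
a contradiction. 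Therefore \(v_\ell(a)\le n/2\) for all \(\ell\), and the theorem follows.

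I would double-check two points: the claim that \(\ell\equiv\pm1\pmod p\) when \(f\le2\), and the exact prime-counting argument. In particular, if \(v_\ell(a)\) is just above \(n/2\), the number of primes containing \(a\) times \(f\) still exceeds \(n/2\), so the inequality \(\ell^{v_\ell(a)}\mid N(a)\) gives the needed contradiction.
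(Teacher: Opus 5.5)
Your overall route matches the paper's. You use the same clique $\clq\subseteq\Z[\zeta]$ and the same reduction via Proposition~\ref{prop:rank_metric_general} to $v_\ell(a)\le n/2$. You use the norm bound of Lemma~\ref{lem:norm_bound} for $\ell\equiv\pm1\pmod p$. For residue degree at least $n/2$ you use the observation that $a\notin\ell R$ forces some prime above $\ell$ to miss $a$. The paper splits by size ($\ell\ge n+1$ versus $\ell<n$) rather than by residue degree, which is cosmetic.

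\textbf{The gap: the case $\ell=p$.} Total ramification gives $pR=\mathfrak{p}^n$ with $\mathfrak{p}=(1-\zeta)$. Hence $R/(aR+pR)=R/\mathfrak{p}^k$ with $k=\min(v_{\mathfrak p}(a),n)$, and this is neither $0$ nor $\F_p$ once $k\ge 2$. For example, take $a=1-\zeta-\zeta^2+\zeta^3=(1-\zeta)(1-\zeta^2)$. It is an admissible difference for every $n\ge 4$, and it equals $(1-\zeta)^2$ times the unit $1+\zeta$. So $v_p(a)=2$, and your claim $v_p(a)\le 1$ is false.

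\textbf{The fix.} The needed bound $v_p(a)\le n/2$ is still true, and the tool you already use gives it. $R/(aR+pR)$ is a quotient of $R/aR$, so $p^{v_p(a)}\le\#(R/aR)=|N(a)|\le p^{n/2}$ by Lemma~\ref{lem:norm_bound}. This is exactly how the paper handles $\ell=p$: it puts $p$ together with all primes $\ell\ge n+1$ and applies the norm bound there.

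\textbf{Two minor points.} Your sentence ``$a$ must be non-zero modulo every prime above $\ell$'' is only true, and only proved, in the inert case $f=n$. The remark that complex conjugation swaps the two primes when $f=q$ fails for $p=5$, where $-1$ lies in the decomposition group. You never use it, though.
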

\begin{proof}
	Let \(R=\Z[\zeta]\) where \(\zeta\) is a \((n+1)\)-th primitive root of unity. 
	We will apply Proposition~\ref{prop:rank_metric_general} to 
	\[ \clq=\Big\{ \sum_{i=0}^{n-1} a_i \zeta^i : a_i\in\{0,1\} \Big\}. \]
	Thus it suffices to show for all primes \(q\) and non-zero \(x=a_0\zeta^0+\dotsm+a_{n-1}\zeta^{n-1}\) with \(a_0,\dotsc,a_{n-1}\in\{-1,0,1\}\), that \(v_q(x)\leq n/2\).
	If \(q\geq n+1\), then by Lemma~\ref{lem:norm_bound} with \(p=n+1\) we have 
	\[v_q(x)\leq \log_q N(xR+qR)\leq \log_q N(x)\leq \log_q (n+1)^{n/2}\leq n/2.\]
	Thus suppose \(q<n\). For each maximal ideal \(q\in \m\subset R\) the multiplicative order of \(q\textup{ mod }n+1\) equals \(\dim_{\F_q} R/\m\). Since \(q\not\equiv \pm 1\ (\textup{mod }n+1)\) and \(n\) is safe, this order can only be \(n/2\) or \(n\).
	Since \(x\not\in qR\), there is some prime \(\m\subset R\) above \(q\) such that \(x\not\in\m\).
	Hence \(v_q(x)\leq n-(n/2) = n/2\).
\end{proof}

\section{Exceptional cliques in cryptography}\label{sec:exceptional_cliques_in_cryptography}

Exceptional units have several applications in cryptography. First, we consider the notion of secret sharing scheme, a well-known and well-studied cryptographic primitive. A secret sharing scheme allows a \textit{dealer} to distribute the knowledge of a secret among a set of parties (\textit{share receivers}) by sending each of them a related piece of information (a \textit{share}), in such a way that only prescribed subsets of these parties can later reconstruct the secret when they pool their received shares together, while other subsets can gather no information about the secret. For example, in a $t$-threshold secret sharing scheme, any set of at least $t+1$ shares allows one to determine the secret entirely, while any set of $t$ shares is jointly distributed independently from the secret. Over a finite field $\F$, Shamir secret sharing \cite{Sha79} is the best known way to realize a $t$-threshold secret sharing scheme. For a number $n$ of share receivers, under the restriction $n<|\F|$, the scheme requires fixing $n+1$ publicly known pairwise distinct elements of the field, say $\alpha_0,\alpha_1,\dots,\alpha_n$. The dealer chooses a polynomial uniformly at random in the set of polynomials $f$ in $\F[X]$ of degree at most $t$ and such that $f(\alpha_0)=s$. The shares are then defined as $f(\alpha_i)$, for $i=1,\dots,n$. The aforementioned properties are then both derived from the fact that a unique polynomial of degree at most $t$ interpolates $t+1$ points. It is not hard to verify that this in turn relies on the fact that all differences $\alpha_i-\alpha_j$ are invertible (since the $\alpha_i$ are pairwise different and we are working over a field).  

It is interesting to find counterparts of this scheme that can work over other algebraic structures of relevance in cryptography. In particular, the notion of \textit{black-box secret sharing} \cite{DF94} considers secret sharing schemes over a finite commutative group $G$ whose structure we may not know in full\footnote{e.g. $(\Z/N\Z)^*$ where $N$ is known, but it is a product of two large unknown primes}, but where we have some description of group elements and can compute the group operation and inverses on them (i.e. we have ``black-box access'' to these operations) and sample elements from $G$ uniformly at random. Using the $\Z$-module structure of the group one can regard this as a secret sharing problem on integers. Concretely, we could think of finding an analogue to the Shamir secret sharing above which consists of sampling a random polynomial in $G[X]$ and ``evaluating" the polynomial on integers using the action of $\Z$ on $G$.

However, the largest exceptional clique over the integers has two elements, which prevents us from obtaining the threshold property described above by these means. Techniques for constructing black-box secret sharing have thus either relied on embedding $\Z$ into a ring with a larger exceptional clique, or on using modified constructions which require slightly weaker properties from the set of evaluation points. In the first direction, \cite{DF94} use number fields as an embedding ring, but this leads to the size of each share being $\Theta(n)$ elements of the group (on average). The second approach yields better results: \cite{CF02} used two sets of integers whose differences are not necessarily invertible, but such that the products of all differences in each of the sets are coprime with each other. This approach yields shares consisting of $\lfloor \log_2(n) \rfloor +2$ group elements each. ~\cite{CFS05} showed that these two approaches can be seen as special cases of a framework where the group $G$ is embedded into the tensor product $G\otimes_{\Z} R$, where $R=\Z[X]/(f)$ for an irreducible $f\in\Z[X]$, and then taking the evaluation points $\alpha_i$ in $R$. In fact, they show that, to obtain a threshold scheme, it is enough to choose the set of evaluation points so that the product of all $\alpha_i-\alpha_j$ has only $1$ and $-1$ as rational divisors (such a set is called primitive in \cite{CFS05}), and later show that this allows one to reduce the shares to $\lceil \log_2(n)\rceil$ group elements each.
Finally, \cite{CX20} presents an alternative approach applying a local-global principle, where they construct a family of threshold secret sharing schemes over every ring $\Z/{p^\ell}\Z$ (for every $p$ prime and natural $\ell\geq 1$) in such a way that they can be efficiently glued together to construct a black-box threshold secret sharing scheme. While the share size is slightly larger, namely $\lceil \log n\rceil+1$, than the result in \cite{CFS05}, this is an interesting conceptual alternative approach that is also generalized to quasi-threshold access structures (where rather than the secret being reconstructed by any set of $t+1$ parties, it can be reconstructed by any $t+c$ for some small integer $c>1$).

The best quantitative result in \cite{CFS05} achieves almost optimal share size when we want to share one element of the group: indeed, a known lower bound for the average share size for a $t$-threshold black-box secret sharing scheme (if $1\leq t\leq n-2$) is $\lfloor\log_2 n\rfloor-1$ group elements, by an argument presented in~\cite{CF02} which ultimately relies on \cite{KW93}. For $t=1$, a case that will be especially relevant below, one can prove the stronger lower bound $\log_2 n$~\cite{CF02}, which makes the construction in \cite{CFS05} optimal for the special case where $n$ is a power of $2$.

This leaves at least two questions open: an obvious one is whether we can close the gap between lower and upper bound for share size in the remaining cases; a second, more ``practical'' question is whether we can obtain better ``amortized'' share size in case we share a tuple of secrets from $G$, rather than a single one \footnote{Note the lower bound on the share-size above does not prevent this: while the bound could be applied to the direct product group $G'=G^m$, this would only preclude secret sharing schemes where the black-box access consists only of the group operation and inverse on $G'$, but it would not say anything about schemes where secrets are in $G^m$ but one also has access to operations on $G$}. The latter question is in part motivated by the framework described above: $G\otimes_{\Z} R$ is isomorphic as a $\Z$-module to $G^m$, and the secret sharing scheme can be seen as a ``Shamir sharing over $G^m$'' (with some restrictions on the evaluation points and on how to embed the secret). Unfortunately, though, using a primitive set of $R=\Z[X]/(f)$ as described above only allows us to obtain the desired threshold properties of the secret sharing schemes if we restrict ourselves to share secrets from $G\times \{0\}^{m-1}\subset G^m$ but will not, in general, allow us to reconstruct arbitrary secrets from $G^m$.

Suppose now that we have a set $S$ of $n$ matrices in $\textup{Mat}_m(\Z)$ and, for any natural number $t>0$, consider the following black-box secret sharing scheme for $n$ share receivers with secrets in $G^m$ and each share in $G^m$, inspired by the above framework: given a secret $s\in G^m$, choose a polynomial $f(X)$ uniformly at random in the set of polynomials in $G^m[X]$ of degree at most $t$ and such that the coefficient in $X^t$ is the secret $s$ (i.e. the remaining coefficients for $X^j$, $j=0,\dots, t-1$ are chosen uniformly at random in $G^m$) and define the shares to be $f(M)$ for each $M$ in $S$.
It turns out that the following holds:
\begin{itemize}
	\item For any $t>0$, if $S$ is a \textit{commutative} clique, the above construction is a $t$-threshold black-box secret sharing scheme. 
	\item For $t=1$, the above construction is a $1$-threshold black-box secret sharing scheme, \emph{even if $S$ is a non-commutative clique}.
\end{itemize} 

Therefore, the existence of exceptional cliques in $\textup{Mat}_m(\Z)$ of size $2^m$ would not only give a 1-threshold black-box secret sharing scheme with shares of size $m=\log_2 n$, matching the lower bound and known construction from \cite{CFS05} but it would also show that we can even ``pack'' a secret consisting of $m$ elements of the group, rather than a single one.
Moreover, the existence of a commutative exceptional clique of size $2^m$ would allow one to generalize these results to general threshold $t$.\\ 

Another application of exceptional units in cryptography can be found in the domain of zero knowledge proofs. More precisely, a $\Sigma$-protocol is perhaps the best known type of zero-knowledge proof. They are especially suitable for proving knowledge of a preimage of a linear map between vector spaces, without revealing this preimage. A paradigmatic example of $\Sigma$-protocol is the Schnorr proof of knowledge of discrete logarithm of a given element in a cyclic group of large prime order: given a group $G$ of prime order $q$, and public elements $g$ and $x$, the prover wants to convince the verifier that she knows $w\in\Z/q\Z$ such that $g^w=x$ without revealing $w$. The \textit{soundness} (the fact that a malicious prover can only convince the verifier of a false statement with small probability, in this case $1/q$) relies on the fact that the finite field is an exceptional clique.

Several works have studied how to extend $\Sigma$-protocols to other algebraic structures: \cite{CD09, CDK15} constructed $\Sigma$-protocols to prove knowledge of preimages for what they named ZK-ready functions: this is a class of maps between groups that in particular include cases where the groups involved are $A$-modules for $A$ a commutative ring with $1$, and the functions are group homomorphisms, although the notion is more general. On the other hand, \cite{BC24} defined a generalization of Schnorr and other $\Sigma$-protocols in terms of secret sharing schemes, showing that one can construct $\Sigma$-protocols for $A$-module homomorphisms from a linear secret sharing scheme for $A$-modules, where one can derive the needed security properties for the $\Sigma$-protocols from those of the underlying secret sharing scheme. In both cases, the ``soundness error'' of the protocol (the probability that a malicious prover successfully convinces a verifier of a false statement) is the inverse of the size of an exceptional clique (in $A$), which is used by the verifier during the protocol to challenge the prover. Rings with larger exceptional cliques will lead to smaller soundness errors.
Moreover, one can ``batch'' proofs (prove the knowledge of the preimages of several, say $m$, elements simultaneously, with better communication and computation than carrying out $m$ separate proofs) and in that case we are interested in having large exceptional cliques in $\textup{Mat}_m(A)$, or more generally in $\Hom_A(A^m, A^n)$ where $m\leq n$. In the latter case, the ratio $n/m$ roughly measures how much overhead in terms of computation and communication we will have, and we want to minimize $n$ with respect to m while at the same time having a large exceptional clique, with size as close to $2^m$ as possible. 

An especially relevant case is again when the statements to be proved are homomorphisms or ZK-ready functions defined on commutative groups of unknown order. Since these groups are $\Z$-modules, we are interested in analyzing the sizes of the largest exceptional cliques in $\textup{Mat}_m(\Z)$ and $\Hom_\Z(\Z^m,\Z^n)$ (as defined in Section~\ref{sec:rankmetric}). In this setting, \cite{CD09} proved that there is a clique of maximal possible size $2^m$ in $\Hom_\Z(\Z^m,\Z^{2m-1})$. In \cite{BC24}, this was generalized to show that if, for some $s>0$, $\textup{Mat}_s(\Z)$ contains a clique of size $2^s$, then for any multiple $s|m$, there is a clique of size $2^m$ in $\Hom_\Z(\Z^m,\Z^{2m-s})$; moreover, the existence of such optimal exceptional cliques in $\textup{Mat}_s(\Z)$ was shown for $s=3$ (while for $s=1,2$ the existence follows from elementary constructions). As shown in this work, also for $s=4$, $\textup{Mat}_s(\Z)$ has an exceptional clique of size $2^s$, while the question is open for all other $s>4$.

In summary, for this application to zero-knowledge proofs, we are also interested in studying the size that exceptional cliques in $\textup{Mat}_m(\Z)$ can have, and also in the more general case of exceptional cliques in $\Hom_\Z(\Z^m,\Z^n)$.

%In fact, we know that a lower bound for the average share size for a $t$-threshold black-box secret sharing scheme (if $1\leq t\leq n-2$) is $(\log_2 n)-1$ group elements, by a combination of arguments from \cite{CF02} and \cite{CCX13}, which ultimately rely on \cite{KW96} (for $t=1$, one can prove the stronger bound $\log_2 n$ \cite{CF02}).

%This was initiated by Desmedt and Frankl \cite{}, who used number fields to embed the group. However, this yields shares of average size $\Theta(n)$ which is far from the lower bound. Cramer and Fehr~\cite{CF02} and Cramer, Fehr and Stam~\cite{CFS04} later constructed black-box secret sharing scheme with shares of average size $\lfloor \log_2 n\rfloor +2$ and $\lceil \log_2 n \rceil$ respectively.
% TODO: Explain techniques
%  Acieving lower bound, packing

\section*{Acknowledgements}

The computational results of Theorem~\ref{thm:computational_commutative} were obtained using the computing resources from the Academic Leiden Interdisciplinary Cluster Environment (ALICE) provided by Leiden University.

\bibliographystyle{plain} 
\bibliography{citations}

\end{document}